\documentclass[11 pt]{article}
\usepackage{style}

\providecommand{\Halmos}{\hfill\ensuremath{\square}}
\providecommand{\Tilde}[1]{\widetilde{#1}}
\providecommand{\Bar}[1]{\overline{#1}}

\title{\LARGE\bfseries A Min-Max Gradient Search Method for Constrained Simulation Optimization}
\author{
Ruiyang Jin\textsuperscript{1}, Siyang Gao\textsuperscript{1}, and Henry Lam\textsuperscript{2}\\
{\small \textsuperscript{1}\textit{Department of Systems Engineering, City University of Hong Kong}}\\
{\small \href{mailto:syeruiyajin@cityu.edu.hk}{\textit{syeruiyajin@cityu.edu.hk}}, \href{mailto:siyangao@cityu.edu.hk}{\textit{siyangao@cityu.edu.hk}}}\\
{\small \textsuperscript{2}\textit{Department of Industrial Engineering and Operations Research, Columbia University}}\\
{\small \href{mailto:khl2114@columbia.edu}{\textit{khl2114@columbia.edu}}}
}
\date{\vspace{-0.4 in}}
\begin{document}
\maketitle

\begin{abstract}
Constrained simulation optimization (CSO) is a general framework for optimizing stochastic systems under performance constraints. It arises widely in practice where objective and constraint evaluations are available only through noisy simulation outputs. Compared with the unconstrained setting, the lack of accessible analytical gradients for simulation-based constraints makes it more challenging to develop efficient solution methods and establish non-asymptotic guarantees. To address this gap, we propose a novel single-loop algorithm, called min-max gradient search (MGS), which integrates a primal-dual framework with stochastic gradient estimators. Unlike conventional stochastic approximation methods based on gradient descent for solving simulation optimization problems, such as \cite{zhou2017gradient} and \cite{hu2025convergence}, MGS performs alternating gradient descent and ascent on the primal and dual variables, which improves the objective while penalizing constraint violations. For the first time, we establish a finite-time convergence guarantee for single-loop CSO algorithms by showing that MGS converges to a stationary solution (a Karush-Kuhn-Tucker point under mild conditions) at a rate of $\tilde{O}(T^{-1/3})$, where $T$ is the number of iterations.  Numerical experiments on a serial queuing system and a 2000-dimensional optimization problem demonstrate the superior performance and scalability of MGS.
\end{abstract}

\noindent\textbf{Keywords:} Constrained simulation optimization, gradient estimation, stochastic approximation, non-asymptotic analysis

%%%%%%%%%%%%%%%%%%%%%%%%%%%%%%%%%%%%%%%%%%%%%%%%%%%%%%%%%%%%%%%%%%%%%%

\section{Introduction}\label{sec:Intro}
Simulation optimization (SO) is widely used in operations research to tackle complex problems where analytical models are unavailable or intractable. Motivated by many real-world applications involving multiple performance measures and stochastic constraints, we study the following constrained simulation optimization (CSO) problem:
\begin{equation}
\begin{aligned}\label{eq:constrained_problem}
    \min_{x\in \mathcal{X}}\; & h_0(x)=\mathbb{E}[h_0(x;\xi)],\\ 
    \text{s.t.}\; & h_j(x)=\mathbb{E}[h_j(x;\xi)]\leq 0,\; \forall j\in \mathcal{I},
\end{aligned}    
\end{equation}
where the feasible set $\mathcal{X}\subseteq \mathbb{R}^{d_x}$ is a closed and convex set. The functions $h_j:\mathbb{R}^{d_x}\to\mathbb{R},\forall j\in\{0\}\cup \mathcal{I}$ represent the multiple performance measures of the system under study.  We set $h_0$ as the objective measure and $h_j, \forall j\in\mathcal{I}$ as the constraint measures. In practice, $h_j(x)$ is unknown and can only be estimated using samples $h_j(x; \xi)$, $\forall j \in \{0\} \cup \mathcal{I}$, obtained from a simulation model of the system, where $\xi$ represents the uncertainty in the simulation. 

% For example, in queuing systems, a typical application involves minimizing total service cost by coordinating service rates while maintaining quality \citep{bassamboo2010accuracy}. In inventory management, it is common to minimize overall costs while keeping the stockout rate below a specified threshold \citep{kleijnen2010constrained}. In energy systems, operational costs must often be minimized while ensuring system safety \citep{cordoba2024optimal}.
Real-world problems that fit the form of \eqref{eq:constrained_problem} are prevalent \citep{amaran2016simulation}. Examples include minimizing service costs while maintaining service quality in queuing systems \citep{bassamboo2010accuracy}, minimizing total costs subject to a low stockout rate in inventory management \citep{kleijnen2010constrained}, and minimizing operational costs while guaranteeing safety in energy systems \citep{cordoba2024optimal}. Since these systems are generally too complex to be captured analytically, simulation models provide a promising solution, and the resulting optimization tasks are naturally formulated as CSO problems. However, algorithmic and theoretical developments for CSO remain limited. The difficulty stems from the fact that the objective and constraint functions can only be accessed via noisy simulation outputs. This means that optimization and feasibility checks have to be carried out under uncertainty, which limits the effectiveness of standard search methods and makes it significantly harder to establish finite-time convergence guarantees.

% the inherent system stochasticity and the lack of analytical models for the objective and constraint measures, which make traditional search-based methods ineffective and pose significant challenges for establishing finite-time convergence guarantees for the algorithms.

This research addresses this gap. Our method builds on a primal-dual framework with stochastic gradient estimations by considering the Lagrangian of \eqref{eq:constrained_problem}. Although primal-dual methods are well-studied in nonconvex optimization, they are relatively underexplored in SO \citep{bhatnagar2011stochastic}. By combining the primal-dual framework with stochastic finite-difference gradient estimators, we are able to design an efficient algorithm for CSO in the form of \eqref{eq:constrained_problem} with non-asymptotic guarantees.

% In our approach, the primal variables are updated to improve the objective, while the dual multipliers penalize constraint violations, which effectively balances the feasibility and optimality.

% \textbf{Our Contributions.} Given the lack of efficient methods to address SO with noisy constraints, we leverage a primal-dual framework to alternatively solve a min-max problem defined on the Lagrange function of \eqref{eq:constrained_problem}. 

\textbf{Our Contributions.} Specifically, we employ a Gaussian smoothing-based finite-difference gradient estimator, which requires at least two simulation samples per gradient estimate, and integrate it into a single-loop gradient descent–ascent (GDA) scheme. The primal updates in GDA minimize the objective, while the dual updates maximize the penalty for constraint violations. 
When estimating gradients using finite differences in function values, we introduce common random numbers (CRNs) and analyze how they reduce the variance of gradient estimations.
To stabilize the optimization and accelerate convergence, a regularization term is added to the Lagrangian to introduce strong concavity, and a momentum-based update that combines current and historical gradients is applied to smooth out noisy updates.
% Moreover, a momentum-based update that combines current and historical gradients is applied to stabilize the optimization and accelerate convergence. 
Building on these components, we design the \textit{min-max gradient search} (MGS) algorithm. 

We establish non-asymptotic convergence guarantees for the MGS algorithm. The results show that MGS can converge to a stationary point at a convergence rate of $\tilde{O}(T^{-1/3})$, where $T$ is the number of iterations. Under mild conditions, any such stationary point corresponds to a Karush-Kuhn-Tucker (KKT) solution of \eqref{eq:constrained_problem}. To our knowledge, this work offers the \textit{first} finite-time convergence guarantee for single-loop algorithms to solve the CSO problem in \eqref{eq:constrained_problem}. Additionally, this research is related to the literature on zeroth-order min-max optimization. In this context, our work improves the \textit{best-known} sample complexity bound for single-loop zeroth-order algorithms from $O(d\epsilon^{-4})$ to $\tilde{O}(d\epsilon^{-3.5})$ to find an $\epsilon$-stationary point ($d$ is the dimension of the solution space). The theoretical gains primarily stem from the adoption of strong-concavity regulation and momentum-based updates, whereas CRNs play an important role in empirical improvements.
% \citep{NEURIPS2024_413885e7}.

Finally, the superior performance of MGS is validated through numerical experiments on both a queuing system and a 2000-dimensional stochastic optimization problem.

\textbf{Related Work.} Most literature on SO with constraints considers deterministic constraints on the solutions $x$, which do not involve sampling the constraint measures and are, in essence, closer to unconstrained SO, i.e., $\min_{x\in\mathcal{X}} h_0(x)$ \citep{fu2015handbook}.
In this regard, the classical gradient-based framework of stochastic approximation (SA) handles this problem through iterative gradient estimation combined with projection operators \citep{zhou2017gradient,xu2023gradient,hu2025convergence}. The SA methodology originated with the Robbins-Monro (RM) algorithm \citep{robbins1951stochastic}, which uses direct gradient estimates, and was followed by the Kiefer-Wolfowitz (KW) algorithm \citep{kiefer1952stochastic}, which uses finite-difference gradient approximations. Generally, RM can enjoy the convergence rates of $O(T^{-1/2})$ when unbiased gradient estimations are available, which can also be achieved by KW in certain settings when CRNs are applicable \citep{fu2015handbook, kleinman1999simulation}. 
% can achieve the convergence rate of $O(T^{-1/2})$, while KW typically suffers from an additional bias–variance trade-off induced by the perturbation radius, and has the convergence rate of $O(T^{-1/3})$ \citep{fu2015handbook}.
Although SA algorithms are well-suited for problems with constraint sets that allow for easy projection (such as analytically defined boxes or polyhedra), they are not directly applicable to the constraints in \eqref{eq:constrained_problem} for two reasons: the feasible set induced by $\mathbb{E}[h_j(x;\xi)]\leq 0$ is not available in closed form, and even if its geometry were simple, exact projection would require solving a subproblem with constraints that are only noisily evaluated.
% it is geometrically challenging to do projections for implicit constraints; moreover, the constraints can only be stochastically evaluated, which makes the projections more challenging to properly excute.
% which are defined implicitly through simulation-based mappings.

For CSO problems of the form \eqref{eq:constrained_problem}, one line of work focuses on meta-heuristics \citep{rajwar2023exhaustive}, such as genetic algorithms, which work with a population of candidate solutions and move this population toward better feasible solutions by evaluating multiple performance measures iteratively \citep{udhayakumar2011stochastic,tsai2014genetic,chang2025maximizing}. Although often flexible and easy to implement, such approaches typically lack general theoretical guarantees and can require many simulation evaluations to reliably achieve feasibility in the presence of noise.

The second line of work focuses on penalty-based and primal–dual methods. A widely used strategy in the SO literature is to relax hard constraints by incorporating them as soft penalties or by regularizing the objective function. For example, penalty methods combined with stochastic perturbation or finite-difference gradient estimations have been analyzed for inequality-constrained problems \citep{wang2008stochastic}. Although these techniques can handle functional constraints, the theoretical results have largely focused on asymptotic convergence, and non-asymptotic convergence results remain underdeveloped. Another approach is to work with the Lagrangian of the original constrained problem and reformulate it as a min-max problem \citep{bhatnagar2011stochastic, li2022zeroth, nguyen2023stochastic}. One can then apply GDA-type algorithms to solve it, where the primal update minimizes the objective and the dual update increases the penalty for constraint violations. In particular, \cite{nguyen2023stochastic} established the best-known sample complexity bound of $O(d\epsilon^{-3})$ to find an $\epsilon$-KKT point for problem \eqref{eq:constrained_problem} in the nonconvex setting. However, their result is given for a weighted average of the iterates and relies on a \textit{nested-loop} scheme that calls an inner convex solver at every iteration, which complicates both implementation and parameter tuning. 
In contrast, \textit{single-loop} algorithms are another common algorithmic framework. It avoids solving an inner subproblem at each iteration and thus has a simpler structure, which is more practical to implement. However, there is no finite-time convergence result among this class of algorithms for solving \eqref{eq:constrained_problem}.

Another line of research focuses on surrogate-based methods, mainly referred to as Bayesian optimization (BO) \citep{pmlr-v130-eriksson21a}. They build probabilistic surrogate models for the objective and constraint functions, including constrained expected improvement (CEI) \citep{gardner2014bayesian}, predictive entropy search with constraints \citep{hernandez2016general}, and constrained knowledge gradient (CKG) \citep{ungredda2024bayesian}. Although BO methods can perform well in moderate-dimensional settings, they generally suffer from inefficiency in high-dimensional problems \citep{wang2023recent}. In addition, due to the complexity of the algorithm dynamics, such as CEI and CKG, it is generally difficult to establish non-asymptotic convergence guarantees for BO methods \citep{srinivas2010gaussian,ungredda2024bayesian}.

% Existing research on CSO in the form of \eqref{eq:constrained_problem} is still limited. One line of work focuses on meta-heuristics \citep{rajwar2023exhaustive}, such as genetic algorithms, which work with a population of candidate solutions and move this population toward better feasible solutions by evaluating multiple performance measures iteratively \citep{udhayakumar2011stochastic,tsai2014genetic,chang2025maximizing}. However, there are generally no theoretical guarantees for such SO methods. Another line of research focuses on model-based methods, mainly referred to as Bayesian optimization (BO) \citep{pmlr-v130-eriksson21a}. They build probabilistic surrogate models for the objective and constraint functions, including constrained expected improvement (CEI) \citep{gardner2014bayesian}, predictive entropy search with constraints \citep{hernandez2016general}, and constrained knowledge gradient (CKG) \citep{ungredda2024bayesian}. Although BO methods can perform well in moderate-dimensional settings, they generally suffer from inefficiency in high-dimensional problems \citep{wang2023recent}. In addition, due to the complexity of the algorithm dynamics, such as CEI and CKG, it is generally difficult to establish non-asymptotic convergence guarantees for BO methods \citep{srinivas2010gaussian,ungredda2024bayesian}.

Outside the simulation literature, the min-max problem studied in this paper is also closely connected to zeroth-order min-max optimization in the nonconvex-concave settings \citep{liu2020min,NEURIPS2024_413885e7}.  The best-known sample complexity $O(d\epsilon^{-4})$ was established in \cite{NEURIPS2024_413885e7} to derive an $\epsilon$-stationary point for the stochastic nonconvex-concave problems. Our approach can also apply to such settings and improves the best-known sample complexity to $\tilde{O}(d\epsilon^{-3.5})$. 
Moreover, we point out that this research emphasizes the simulation setting, in which the algorithm has access only to simulation samples and where simulation techniques such as CRNs can play an important role in its finite-time performance.

The remainder of this study is organized as follows. We introduce the methodology in Section \ref{sec:Method} and establish the finite-iteration convergence guarantees in Section \ref{sec:convergence_analysis}. Then, we present the results of numerical experiments in Section \ref{sec:numerical}. Finally, Section \ref{sec:conclusion} concludes this paper.

\section{Methodology}\label{sec:Method}

This section presents our methodology for solving \eqref{eq:constrained_problem}. We first present the alternative min-max problem of \eqref{eq:constrained_problem} and provide the technical assumptions. We then introduce the gradient estimator and use it to design the MGS algorithm.

\paragraph{Notation} For a differentiable function $h(x,y):\mathbb{R}^{d_x}\times \mathbb{R}^{d_y}\to \mathbb{R}$, denote the partial gradients with respect to $x$ and $y$ as $\nabla_x h(x,y)$ and $\nabla_y h(x,y)$, respectively. Without further specification, $\|\cdot\|$ denotes the $\ell_2$-norm in Euclidean space. For some random variable $\varsigma$, $\mathbb{E}_\varsigma[\cdot]$ denotes the expectation with respect to $\varsigma$. For any convex set $\mathcal{Z}$, the operator $\mathcal{P}_\mathcal{Z}[\cdot]$ denotes the projection to the set $\mathcal{Z}$.

\subsection{Problem Formulation and Assumptions}
Since the constraints in problem \eqref{eq:constrained_problem} are non-analytical and stochastic, it is challenging to handle them using traditional projection-based SA methods. Instead, we derive its Lagrangian and consider the following min–max formulation:
\begin{align}\label{eq:minmax_problem}
\min_{x\in\mathcal{X}}\max_{y\in\mathcal{Y}}\; h(x,y)=\mathbb{E}\left[h(x,y;\xi)\right],
\end{align}
where $h(x,y)$ is the Lagrangian and $h(x,y;\xi)=h_0(x;\xi)+\sum_{j\in\mathcal{I}}y(j)h_j(x;\xi)$.
The vector $y\in \mathbb{R}^{d_y}$ is the Lagrangian multiplier (we denote $d_y=|\mathcal{I}|$ and $d=d_x+d_y$).
The natural dual cone is
$\mathcal{Y}_{+}:=\{y\in\mathbb{R}^{d_y}\mid y\ge 0\}$.
In our algorithm and analysis, we work with a bounded truncation $\mathcal{Y}:=\{y\in \mathcal{Y}_{+}\mid 0\le y\le \bar y\}$,
where the inequality is component-wise, and $\bar y> 0$ is a fixed upper bound.
% The vector $y\in\mathbb{R}^{d_y}$ is the Lagrangian multiplier (we have $d_y=|\mathcal{I}|$), and $\mathcal{Y}=\{y\in \mathbb{R}^{d_y}| y\geq 0\}$.  
Generally, the stationary points of $h(x,y)$ can provide high-quality solutions to \eqref{eq:constrained_problem} under mild conditions.
We will focus on solving  \eqref{eq:minmax_problem} using SO methods in the rest of this research.

We make the following technical assumptions for our analysis. 

\begin{assumption}\label{assump:bounded}
% For any $\mu\geq 0$, $\inf_{x\in\mathcal{X}}\sup_{y\in\mathcal{Y}} f(x,y)$ is lower bounded by some $F_\mu^*>-\infty$.
$\mathcal{X}$ and $\mathcal{Y}$ are convex and compact sets.
% Moreover, $h(x,y)$ is lower bounded, i.e., $h^*=\min_{(x,y) \in \mathcal{X}\times\mathcal{Y}} h(x,y) \geq -\infty$.
\end{assumption}
Assumption \ref{assump:bounded} imposes boundedness on $\mathcal{X}$ and $\mathcal{Y}$.
The boundedness of $\mathcal{X}$ is a natural assumption in practice. Note that the natural dual cone $\mathcal{Y}_{+}=\mathbb{R}^{d_y}_{+}$ is unbounded. For the dual update, however, we consider the bounded truncation $\mathcal{Y}=\{y\in\mathbb{R}^{d_y}\mid 0\le y\le \bar y\}$ for algorithmic stability and finite-time analysis. This is motivated by the fact that the optimal dual set is bounded when a strictly feasible solution exists for \eqref{eq:constrained_problem} \citep{nedic2009subgradient}. Consequently, we can choose $\mathcal{Y}$ large enough so that it contains at least one optimal dual variable and then solve \eqref{eq:minmax_problem} over $\mathcal{X}\times\mathcal{Y}$.
Although solving \eqref{eq:minmax_problem} is not exactly the same as solving \eqref{eq:constrained_problem}, we will show that any stationary point of \eqref{eq:minmax_problem} corresponds to a KKT point of \eqref{eq:constrained_problem} under additional conditions.

\begin{assumption}\label{assump:lipschitz}
Each component function $h(x,y;\xi)$ is differentiable and Lipschitz continuous, i.e., for any $x_1, x_2\in\mathbb{R}^{d_x}$ and $y_1,y_2 \in\mathbb{R}^{d_y}$, we have $| h(x_1,y_1;\xi)-h(x_2,y_2;\xi)| \leq \Lambda\|(x_1,y_1)-(x_2,y_2)\|$ for some $\Lambda>0$. 
\end{assumption}

\begin{assumption}\label{assump:smoothness}
Each component function $h(x,y;\xi)$ has $L_h$-Lipschitz continuous gradients, i.e., there exists a $L_h>0$ such that $\|\nabla_x h(x_1,y_1;\xi)-\nabla_x h(x_2,y_2;\xi)\|\leq L_h\|(x_1,y_1)-(x_2,y_2)\|$ and $\|\nabla_y h(x_1,y_1;\xi)-\nabla_y h(x_2,y_2;\xi)\|\leq L_h\|(x_1,y_1)-(x_2,y_2)\|$ for any $x_1,x_2\in\mathbb{R}^{d_x}$ and $y_1,y_2\in\mathbb{R}^{d_y}$.
% \begin{align*}
%     \|\nabla_x h(x_1,y_1;\xi)-\nabla_x h(x_2,y_1;\xi)\|&\leq L_h\|x_1-x_2\|,\\
%     \|\nabla_x h(x_1,y_1;\xi)-\nabla_x h(x_1,y_2;\xi)\|&\leq L_h\|y_1-y_2\|,\\
%     \|\nabla_y h(x_1,y_1;\xi)-\nabla_y h(x_1,y_2;\xi)\|&\leq L_h\|y_1-y_2\|,\\
%     \|\nabla_y h(x_1,y_1;\xi)-\nabla_y h(x_2,y_1;\xi)\|&\leq L_h\|x_1-x_2\|.
% \end{align*}
\end{assumption}
\begin{assumption}\label{assump:bounded_variance}
    There exists some $\Tilde{\sigma}\geq 0$ such that $\mathbb{E}_{\xi}[\|\nabla_x h(x,y;\xi)-\nabla_x h(x,y)\|^2]\leq \Tilde{\sigma}^2$ and $\mathbb{E}_{\xi}[\|\nabla_y h(x,y;\xi)-\nabla_y h(x,y)\|^2]\leq \Tilde{\sigma}^2$.
\end{assumption}

Assumptions \ref{assump:lipschitz} and \ref{assump:smoothness} impose Lipschitz continuity on both component functions $h(x,y;\xi), \forall\xi$ and their gradients, which are common for search-based SO problems \citep{fan2018surrogate,hu2025convergence}. Assumption \ref{assump:bounded_variance} serves to bound the variance of stochastic gradients. It is satisfied when we alternatively assume the variances of $h_j(x;\xi),\forall j\in \mathcal{I}$ and $\nabla_x h_j(x;\xi),\forall j\in \{0\}\cup \mathcal{I}$ to be bounded. Similar assumptions are widely adopted in gradient-based SO \citep{hu2022stochastic,xu2023gradient}.

% the stationary points are not equivalent when applying $\mathcal{Y}$ and $\mathcal{Y}_+$, the optimal dual set under Slater condition is bounded \citep{nedic2009subgradient}.
% The rationale comes from the boundedness of the optimal dual set under Slater condition \citep{nedic2009subgradient}. 

% Alternatively, we can assume Slater's condition holds, under which one can specify a $\bar y$ so that $\mathcal{Y}$ contains at least one maximizer of the Lagrangian dual function \citep{nedic2009subgradient}.
% The boundedness on $\mathcal{X}$ is natural in practice, but there is a gap for the boundedness of $\mathcal{Y}$ between the theoretical analysis and our problem where $\mathcal{Y}=\{y\in\mathbb{R}^{d_y}| y\geq 0\}$ is unbounded. It has been shown that the optimal dual set is bounded under the Slater condition (there exists some $x\in\mathcal{X}$ satisfying $h_j(x)< 0,\forall j\in\mathcal{I}$) \citep{nedic2009subgradient}. Therefore, we can construct such a bounded set to replace $\mathcal{Y}$ in our algorithm.
% we can construct a bounded set containing the optimal set of the dual variables to replace $\mathcal{Y}$ under the Slater condition (there exists some $x\in\mathcal{X}$ satisfying $h_j(x)< 0,\forall j\in\mathcal{I}$) \citep{nedic2009subgradient}.
% It is a common issue in the analysis of gradient descent ascent algorithms, although it is proven in \cite{nedic2009subgradient} that the optimal $y$ lies in a bounded set when the $h(x,y)$ is convex in $x$.

\subsection{Finite-Difference Gradient Estimator}
We propose to use the gradient-based method to solve problem \eqref{eq:constrained_problem} within the primal-dual framework. Due to the non-analytical property of $h_j(x), \forall j\in \{0\}\cup \mathcal{I}$, direct gradient calculation of $h(x,y)$ is intractable. Alternatively, we can estimate the gradient of $h(x,y)$ by the input-output information derived from simulations. Traditional coordinate-wise finite-difference gradient estimator requires $O(d)$ simulations to construct a gradient estimate, which is expensive for high-dimensional problems \citep{scheinberg2022finite}. Therefore, we apply a random gradient estimator based on Gaussian smoothing \citep{nesterov2017random,xu2023gradient,lam2025distributionally}. Denote $\zeta=\{\xi, z, e\}$, where the vectors $z\in\mathbb{R}^{d_x}$ and $e\in\mathbb{R}^{d_y}$ are sampled from the Gaussian distributions $\mathcal{N}(0,d_x^{-1}I_{d_x})$ and $\mathcal{N}(0,d_y^{-1}I_{d_y})$, respectively. Consider the following forward finite-difference gradient estimator:
\begin{align}
&\hat{\nabla}_x h(x,y;\zeta)=\frac{d_x(h(x+rz,y;\xi)-h(x,y;\xi))}{r}\cdot z,\label{eq:grad_x_single}\\
&\hat{\nabla}_y h(x,y;\zeta)=\frac{d_y(h(x,y+re;\xi)-h(x,y;\xi))}{r}\cdot e,\label{eq:grad_y_single}
\end{align}
where $\hat{\nabla}_x h(x,y;\zeta)$ and $\hat{\nabla}_y h(x,y;\zeta)$ are estimated partial gradients in $x$ and $y$, respectively. $r>0$ is a small scalar called the smoothing radius. Here, we apply CRNs within each estimation: the baseline and perturbed simulations are driven by the same realization of $\xi$ so that $ h(x+rz, y; \xi)$ and $ h(x, y; \xi)$ share randomness (and similarly for $h(x,y+re;\xi)$ and $h(x,y;\xi)$). This coupling turns the two-point difference into an increment along the same sample path and reduces the variance of the resulting gradient estimator.
% in the two simulation runs used to compute $\hat{\nabla}_x h(x, y; \zeta)$ and $\hat{\nabla}_y h(x, y; \zeta)$ for variance reduction, i.e., $h(x+rz,y;\xi)$ and $h(x,y;\xi)$ are based on the same $\xi$ (and so are $h(x,y+re;\xi)$ and $h(x,y;\xi)$). 
It is also worth noting that the partial gradient in $y(j)$ can be denoted by $ h_j(x;\xi)$ when $h(x,y)$ is the Lagrange function of \eqref{eq:constrained_problem}. However, we apply the more general form of \eqref{eq:grad_y_single} in our algorithm design and analysis, which does not essentially change the order of performance guarantees.

% For each gradient estimation, we sample a mini-batch of $q$ independent random vectors $\mathcal{B}=\{\xi_i\}_{i=1}^q$. Then, consider the classical two-point gradient estimator for the partial gradient w.r.t $x$ based on the finite difference of two simulation outputs:
% \begin{align}\label{eq:grad_x_estimator}
% \hat{\nabla}_x h(x,y;\mathcal{B})\!=\!\frac{1}{q}\!\sum_{i=1}^q\!\frac{h(x+rz_i,y;\xi_i)\!-\!h(x,y;\xi_i)}{r}\!\cdot\! z_i,
% \end{align}
% where $r>0$ is a small scalar called the smoothing radius. The vector $z_i\in\mathbb{R}^{d_x}$ is sampled from the Gaussian distribution $\mathcal{N}(0,I_{d_x})$. Note that two variance-reduction techniques are used in \eqref{eq:grad_x_estimator}. First, we use the common random number to carry on the two simulations, i.e., $\xi_i$ is generated using the same random seeds. Second, a mini-batch of $q$ stochastic gradient estimations is generated and averaged for each estimator.

% Similarly, the partial gradient w.r.t. $y$ can be formulated as
% \begin{align}\label{eq:grad_y_estimator}
% \hat{\nabla}_y h(x,y;\mathcal{B})\!=\!\frac{1}{q}\!\sum_{i=1}^q\!\frac{h(x,y\!+re_i;\xi_i)\!-\!h(x,y;\xi_i)}{r}\!\cdot\! e_i,
% \end{align}
% where $e_i\in\mathbb{R}^{d_y}$ is the vector sampled from $\mathcal{N}(0,I_{d_y})$. 

To further investigate the properties of gradient estimators \eqref{eq:grad_x_single} and \eqref{eq:grad_y_single}, define
$$h_1(x,y;\xi)=\mathbb{E}_z[h(x+rz,y;\xi)],\qquad h_2(x,y;\xi)=\mathbb{E}_e[h(x,y+re;\xi)]$$
as the expected gradient estimates given $\xi$.
In addition, denote $h_1(x,y)=\mathbb{E}_\xi[h_1(x,y;\xi)]$ and $h_2(x,y)=\mathbb{E}_\xi[h_2(x,y;\xi)]$. Under mild conditions, we obtain the following lemma, which establishes bias bounds for the gradient estimators \eqref{eq:grad_x_single} and \eqref{eq:grad_y_single} and will be frequently used in our analysis.
\begin{lemma}[\cite{nesterov2017random}]\label{lemma:esti_bias}
Under Assumption \ref{assump:smoothness}, we have (1) $h_1(x,y;\xi)$ and $h_2(x,y;\xi)$ are $L_h$-smooth for any $\xi$; (2) $\mathbb{E}_{z}[\hat{\nabla}_x h(x,y;\zeta)]=\nabla_x h_1(x,y;\xi)$ and $\mathbb{E}_{e}[\hat{\nabla}_y h(x,y;\zeta)]=\nabla_y h_2(x,y;\xi)$ for any $\xi$; (3) $\|\nabla_x h_1(x,y;\xi)-\nabla_x h(x,y;\xi)\|\leq L_hr$ and $\|\nabla_y h_2(x,y;\xi)-\nabla_y h(x,y;\xi)\|\leq L_hr$.
\end{lemma}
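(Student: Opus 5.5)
Fix $\xi$ and $y$ throughout the $x$-part; the $y$-part is symmetric, with $e\sim\mathcal{N}(0,d_y^{-1}I_{d_y})$ in place of $z$. Write $g(x):=h(x,y;\xi)$ and $g_r(x):=\mathbb{E}_z[g(x+rz)]=h_1(x,y;\xi)$. The plan is to follow the standard Gaussian-smoothing calculus of \cite{nesterov2017random}, adapted to the covariance $d_x^{-1}I$.

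\textbf{Part (1).} Since $g$ is differentiable with $L_h$-Lipschitz gradient (Assumption \ref{assump:smoothness}, taken with $y$ held fixed), and $\nabla g(x+rz)$ is dominated by an integrable function (linear growth in $z$), we may differentiate under the expectation: $\nabla g_r(x)=\mathbb{E}_z[\nabla g(x+rz)]$. Then
\[
\|\nabla g_r(x_1)-\nabla g_r(x_2)\|\le \mathbb{E}_z\|\nabla g(x_1+rz)-\nabla g(x_2+rz)\|\le L_h\|x_1-x_2\|.
\]
The same Jensen step, applied with the mixed $(x,y)$ Lipschitz condition, also handles joint smoothness in $(x,y)$.

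\textbf{Part (2).} We use Gaussian integration by parts (Stein's identity) for $z\sim\mathcal{N}(0,\Sigma)$ with $\Sigma=d_x^{-1}I$, namely $\mathbb{E}[f(z)z]=\Sigma\,\mathbb{E}[\nabla f(z)]$. Applied to $f(z)=g(x+rz)$, it gives
\[
\mathbb{E}_z[g(x+rz)z]=d_x^{-1}r\,\mathbb{E}_z[\nabla g(x+rz)]=d_x^{-1}r\,\nabla g_r(x).
\]
Because $\mathbb{E}_z[z]=0$, the baseline term $g(x)z$ has zero mean. Hence
\[
\mathbb{E}_z\bigl[\hat\nabla_x h\bigr]=\frac{d_x}{r}\,\mathbb{E}_z[g(x+rz)z]=\nabla g_r(x),
\]
which is the claimed identity. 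An alternative route writes $g_r$ as a convolution with the Gaussian density and differentiates the kernel, which avoids Stein's identity but gives the same result.

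\textbf{Part (3).} This step needs care with constants. Using part (1)'s formula,
\[
\|\nabla g_r(x)-\nabla g(x)\|=\|\mathbb{E}_z[\nabla g(x+rz)-\nabla g(x)]\|\le L_h r\,\mathbb{E}\|z\|\le L_h r\sqrt{\mathbb{E}\|z\|^2}=L_h r.
\]
The last equality uses $\mathbb{E}\|z\|^2=d_x\cdot d_x^{-1}=1$. This normalisation of the covariance is exactly why the bound carries no dimension factor, unlike Nesterov's $\mathcal{N}(0,I)$ version, which gives $\tfrac{r}{2}L(d+3)^{3/2}$ via a different route.

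The main obstacle is justifying the interchange of differentiation and expectation and the Stein identity under only the stated smoothness. Both follow from dominated convergence, since $|g(x+rz)|\le |g(x)|+\Lambda r\|z\|$ by Assumption \ref{assump:lipschitz} and Gaussian moments are finite.
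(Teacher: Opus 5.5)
Your proof is correct. The paper gives no argument of its own and simply attributes the lemma to \cite{nesterov2017random}. Your parts (1) and (2) are the standard Gaussian-smoothing calculus from that reference, correctly rescaled to the covariance $d_x^{-1}I_{d_x}$: the gradient of the smoothed function is $\mathbb{E}_z[\nabla g(x+rz)]$, and the Stein/convolution identity gives $\nabla g_r(x)=\frac{d_x}{r}\mathbb{E}_z[g(x+rz)z]$.

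Part (3) is where your route genuinely differs from the cited source, and it is the better route. Nesterov and Spokoiny bound the bias by inserting the second-order Taylor remainder into the finite-difference representation. Writing $z=u/\sqrt{d_x}$ with $u\sim\mathcal{N}(0,I_{d_x})$, so that their smoothing parameter is $r/\sqrt{d_x}$, their bound becomes $\frac{r}{2\sqrt{d_x}}L_h(d_x+3)^{3/2}$, which is of order $d_xL_hr$. Your Jensen argument applied to $\mathbb{E}_z[\nabla g(x+rz)-\nabla g(x)]$ gives $L_hr\,\mathbb{E}\|z\|\le L_hr$. So it is your argument, not a literal appeal to the citation, that delivers the dimension-free constant $L_hr$ claimed in the statement.

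One minor point concerns hypotheses. You justify the interchange of differentiation and expectation, and Stein's identity, using Assumption \ref{assump:lipschitz}, which is not among the lemma's hypotheses. Assumption \ref{assump:smoothness} alone suffices, because it gives $\|\nabla g(x+rz)\|\le\|\nabla g(x)\|+L_hr\|z\|$ and $|g(x+rz)|\le|g(x)|+r\|\nabla g(x)\|\|z\|+\frac{L_hr^2}{2}\|z\|^2$. Both bounds are integrable against the Gaussian.
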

Lemma \ref{lemma:esti_bias} indicates that the gradient estimators \eqref{eq:grad_x_single} and \eqref{eq:grad_y_single} are biased estimators and the bias can be very small when a small smoothing radius is applied. Moreover, we provide the following lemma to control the variance of gradient estimators \eqref{eq:grad_x_single} and \eqref{eq:grad_y_single}.
\begin{lemma}\label{lemma:esti_variance}
Under Assumptions \ref{assump:smoothness} and \ref{assump:bounded_variance}, there exists some $\sigma>0$ satisfying $\mathbb{E}_{z,\xi}[\|\hat{\nabla}_x h(x,y;\zeta)-\nabla_x h_1(x,y)\|^2]\leq \sigma^2$ and $\mathbb{E}_{e,\xi}[\|\hat{\nabla}_y h(x,y;\zeta)-\nabla_y h_2(x,y)\|^2]\leq \sigma^2$.
\end{lemma}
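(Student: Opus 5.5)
Because the target is only existence of a uniform constant $\sigma$, the plan is to bound the centered second moment by the raw second moment plus a mean term and control each by quantities that do not depend on $(x,y)$. I treat the $x$-estimator; the $y$-estimator is identical with $(d_y,e)$ in place of $(d_x,z)$. First decompose
\begin{align*}
\hat{\nabla}_x h(x,y;\zeta)-\nabla_x h_1(x,y)
&=\bigl[\hat{\nabla}_x h(x,y;\zeta)-\nabla_x h_1(x,y;\xi)\bigr]
+\bigl[\nabla_x h_1(x,y;\xi)-\nabla_x h_1(x,y)\bigr].
\end{align*}
Conditional on $\xi$, the first bracket has zero mean over $z$ by Lemma \ref{lemma:esti_bias}(2). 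Hence the cross term vanishes, and the squared norm splits into a conditional-variance term plus a term in $\xi$ alone.

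For the second term, I will use $\nabla_x h_1(x,y;\xi)=\mathbb{E}_z[\nabla_x h(x+rz,y;\xi)]$, which follows by differentiating under the expectation, justified by Assumption \ref{assump:lipschitz}. Write $\nabla_x h_1(x,y;\xi)-\nabla_x h_1(x,y)=\mathbb{E}_z[\nabla_x h(x+rz,y;\xi)-\nabla_x h(x+rz,y)]$. Jensen's inequality then gives a second moment at most $\mathbb{E}_z\mathbb{E}_\xi\|\nabla_x h(x+rz,y;\xi)-\nabla_x h(x+rz,y)\|^2\le\tilde\sigma^2$ by Assumption \ref{assump:bounded_variance}.

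For the first term, the variance is bounded by the raw second moment $\mathbb{E}_z\|\hat{\nabla}_x h\|^2$. Using CRN, $h(x+rz,y;\xi)-h(x,y;\xi)=r\langle\nabla_x h(x,y;\xi),z\rangle+R$ with $|R|\le \tfrac{L_h}{2}r^2\|z\|^2$ by Assumption \ref{assump:smoothness}. This gives
\[
\mathbb{E}_z\|\hat{\nabla}_x h\|^2\le 2d_x^2\,\mathbb{E}_z\bigl[\langle\nabla_x h(x,y;\xi),z\rangle^2\|z\|^2\bigr]+\tfrac{L_h^2r^2d_x^2}{2}\,\mathbb{E}_z\|z\|^6 .
\]
With $z\sim\mathcal N(0,d_x^{-1}I)$, the standard Gaussian moment identities (as in \cite{nesterov2017random}) give $\mathbb{E}[\langle g,z\rangle^2\|z\|^2]=\frac{d_x+2}{d_x^2}\|g\|^2$ and $\mathbb{E}\|z\|^6=O(1)$. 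Therefore this term is at most $2(d_x+2)\|\nabla_x h(x,y;\xi)\|^2+O(L_h^2r^2d_x^2)$. By Assumption \ref{assump:lipschitz}, $\|\nabla_x h(x,y;\xi)\|\le\Lambda$ for every $\xi$, so the bound is uniform in $(x,y)$ and $\xi$. This is where the CRN coupling matters: without a shared $\xi$, the difference of function values would not be $O(r)$ and the bound would blow up like $r^{-2}$.

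Combining the two pieces gives $\sigma^2=\tilde\sigma^2+2(d_x+2)\Lambda^2+c\,L_h^2r^2d_x^2$, and the maximum of this with the analogous $y$-constant is the desired $\sigma^2$. The main obstacle is bookkeeping the Gaussian fourth and sixth moment constants under the $d_x^{-1}$ scaling, and making sure the remainder term stays bounded for the chosen $r$. If a tighter $\tilde\sigma$-dependent constant is wanted, one could instead bound $\|\nabla_x h(x,y;\xi)\|^2\le 2\|\nabla_x h(x,y)\|^2+2\|\nabla_x h(x,y;\xi)-\nabla_x h(x,y)\|^2$ and use compactness of $\mathcal{X}\times\mathcal{Y}$ for the first part. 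Either route suffices for existence.
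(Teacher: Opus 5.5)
Your proposal is correct and follows the paper's proof. Both split the error into a conditional-on-$\xi$ variance term, with the cross term removed by Lemma \ref{lemma:esti_bias}(2), and a $\xi$-fluctuation term bounded by Jensen plus Assumption \ref{assump:bounded_variance}. The one difference is that you derive the first-term bound $2(d_x+2)\Lambda^2+\frac12(d_x+2)(d_x+4)L_h^2r^2$ yourself from the Taylor remainder and Gaussian moments, whereas the paper cites Lemma 2.4 of \cite{berahas2022theoretical} for $3d_x\Lambda^2+\frac14(d_x+2)(d_x+4)L_h^2r^2$; like the paper, you also need the gradient bound $\Lambda$ from Assumption \ref{assump:lipschitz}, even though the lemma lists only Assumptions \ref{assump:smoothness} and \ref{assump:bounded_variance}.
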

\begin{proof}[Proof of Lemma \ref{lemma:esti_variance}]
For the first inequality, we decompose the variance by
\begin{align}
&\mathbb{E}_{z,\xi}[\|\hat{\nabla}_x h(x,y;\zeta)-\nabla_x h_1(x,y)\|^2]\nonumber\\
=\;&\mathbb{E}_{z,\xi}[\|\hat{\nabla}_x h(x,y;\zeta)- \mathbb{E}_z[\hat{\nabla}_x h(x,y;\zeta)]\nonumber+\mathbb{E}_z[\hat{\nabla}_x h(x,y;\zeta)]-\nabla_x h_1(x,y)\|^2]\nonumber\\
=\; &\mathbb{E}_{z,\xi}[\|\hat{\nabla}_x h(x,y;\zeta)- \nabla_x h_1(x,y;\xi)\|^2]+\mathbb{E}_{\xi}[\|\nabla_x h_1(x,y;\xi)-\nabla_x h_1(x,y)\|^2],\label{eq:proof_lemma_2}
\end{align}
where the last step follows from Lemma \ref{lemma:esti_bias} and the fact 
$$\mathbb{E}_{z,\xi}[\langle \hat{\nabla}_x h(x,y;\zeta)- \nabla_x h_1(x,y;\xi), \nabla_x h_1(x,y;\xi)-\nabla_x h_1(x,y) \rangle]=0.$$
The first term in the right-hand side of \eqref{eq:proof_lemma_2} can be bounded by $3d_x\Lambda^2+\frac{1}{4}(d_x+2)(d_x+4)L_h^2r^2$ (see Lemma 2.4 in \cite{berahas2022theoretical}). The second term is bounded by 
% Requires: \usepackage{amsmath}
\begin{align*}
    \mathbb{E}_{\xi}\bigl[\|\nabla_{x} h_{1}(x,y;\xi) - \nabla_{x} h_{1}(x,y)\|^{2}\bigr]
    = \; & \mathbb{E}_{\xi}\Bigl[\bigl\|\mathbb{E}_{z}\bigl[\nabla_{x} h(x+rz,y;\xi) - \nabla_{x} h(x+rz,y)\bigr]\bigr\|^{2}\Bigr]\\
    \le\; & \mathbb{E}_{z,\xi}\bigl[\|\nabla_{x} h(x+rz,y;\xi) - \nabla_{x} h(x+rz,y)\|^{2}\bigr]
    \le \tilde{\sigma}^{2},
\end{align*}
where we used Jensen's inequality and Assumption \ref{assump:bounded_variance}. Set $\sigma^2=3d\Lambda^2+\frac{1}{4}(d+2)(d+4)L_h^2r^2+\Tilde{\sigma}^2$. Then, we have $\mathbb{E}_{z,\xi}[\|\hat{\nabla}_x h(x,y;\zeta)-\nabla_x h_1(x,y)\|^2]\leq \sigma^2$. The second inequality in Lemma \ref{lemma:esti_variance} follows a similar proof and shares the same $\sigma^2$ bound.
\Halmos
\end{proof}
The results in Lemma \ref{lemma:esti_variance} show that \eqref{eq:grad_x_single} and \eqref{eq:grad_y_single} have bounded variances under standard assumptions. In our algorithm, variance plays a crucial role in determining the performance of our method.

\subsection{Variance Reduction of CRN Implementation}\label{subsec:crn_analysis}
The variance bound in Lemma \ref{lemma:esti_variance} reflects an important implementation choice, namely, whether the baseline and perturbed function evaluations in \eqref{eq:grad_x_single}-\eqref{eq:grad_y_single} use CRN or independent randomness.
% For the gradient estimator based on independent randomness, we can derive a variance bound analogous to that in Lemma \ref{lemma:esti_variance}.
% To distinguish the two variance constants,
% let $\sigma_{\mathrm{CRN}}^{2}$ and $\sigma_{\mathrm{IND}}^{2}$
% denote the variance bounds associated with the CRN-based implementation and the independent randomness, respectively.
The effect of CRN originates at the level of the underlying two-point simulation difference. For notational simplicity, let $A=h(x+rz,y;\xi)$, $B=h(x,y;\xi)$, and $B'=h(x,y;\xi')$, where $\xi$ and $\xi'$ are independent random vectors. Moreover, define $g_{\mathrm{CRN}}=\hat{\nabla}_x h(x,y;\zeta)=\frac{d_x(A-B)}{r}\cdot z$ and $g_{\mathrm{IND}}=\frac{d_x(A-B')}{r}\cdot z$. We focus only on the gradient estimator for $\nabla_x h(x,y)$, since the argument for the estimator of $\nabla_y h(x,y)$ is entirely analogous. 

The following proposition establishes the upper and lower bounds on $\operatorname{Var}(g_{\mathrm{IND}})$.

\begin{proposition}\label{prop:var_bounds}
Suppose Assumption \ref{assump:lipschitz}-\ref{assump:bounded_variance} hold and $\underline{\sigma}^2\leq \operatorname{Var}(A),\operatorname{Var}(B)\leq \overline{\sigma}^2$ for some $\underline{\sigma}, \overline{\sigma}>0$ and $(x,y)\in\mathcal{X}\times\mathcal{Y}$. Then, we have $\frac{\underline{\sigma}^2d_x^2}{r^2}\leq \operatorname{Var}(g_{\mathrm{IND}})\leq \sigma^2+\frac{2\overline{\sigma}^2d_x^2}{r^2}$.
% $\operatorname{Var}(g_{\mathrm{CRN}})\leq (d_x+2)d_x\Lambda^2$
\end{proposition}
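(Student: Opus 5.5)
Here $\operatorname{Var}(g)$ means the trace variance $\mathbb{E}\|g-\mathbb{E}g\|^2$. The plan is to write $g_{\mathrm{IND}}=g_{\mathrm{CRN}}+\frac{d_x(B-B')}{r}z$ and exploit the independence of $\xi'$ from $(\xi,z)$. Since $\mathbb{E}[B-B'\mid z]=0$, the correction term has zero mean. Hence $\mathbb{E}g_{\mathrm{IND}}=\mathbb{E}g_{\mathrm{CRN}}=\nabla_x h_1(x,y)$ by Lemma \ref{lemma:esti_bias}.

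\textbf{Upper bound.} Use the elementary decomposition $A-B'=(A-B)+(B-\mathbb{E}B)-(B'-\mathbb{E}B')$ and the inequality $\|a+b\|^2\le 2\|a\|^2+2\|b\|^2$ only where needed. Write $g_{\mathrm{IND}}-\mathbb{E}g_{\mathrm{IND}}=(g_{\mathrm{CRN}}-\mathbb{E}g_{\mathrm{CRN}})+\frac{d_x}{r}(B-B')z$. Expanding the square gives three pieces:
\begin{itemize}
\item the first term's second moment is at most $\sigma^2$ by Lemma \ref{lemma:esti_variance};
\item the second term's second moment is $\frac{d_x^2}{r^2}\mathbb{E}[(B-B')^2]\,\mathbb{E}\|z\|^2=\frac{2d_x^2}{r^2}\operatorname{Var}(B)\le\frac{2\overline{\sigma}^2d_x^2}{r^2}$, using $B$, $B'$ independent of $z$ (here $B$ does not depend on $z$), $\mathbb{E}\|z\|^2=1$, and $\mathbb{E}(B-B')^2=2\operatorname{Var}(B)$;
\item the cross term.
\end{itemize}
The cross term is where care is needed. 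Conditioning on $(\xi,z)$ and averaging over $\xi'$ kills the $-B'$ part, leaving $\mathbb{E}\big[\langle g_{\mathrm{CRN}}-\mathbb{E}g_{\mathrm{CRN}},\frac{d_x}{r}(B-\mathbb{E}B)z\rangle\big]$. This need not vanish, since $B$ and $A-B$ share $\xi$. Two routes are available. One is to carry the correlation over and show it is absorbed: write $A-B'=(A-\mathbb{E}_\xi A)+(\mathbb{E}_\xi A-\mathbb{E}B)+(\mathbb{E}B-B')$, where the last piece is independent and mean zero. That piece's contribution is exactly $\frac{d_x^2}{r^2}\operatorname{Var}(B)$, with no cross term. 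The other is to accept a factor and bound the sum as $\sigma^2+\frac{2\overline{\sigma}^2d_x^2}{r^2}$ by grouping the pieces appropriately. I expect the cleanest path is to isolate the independent $B'$ fluctuation and bound the remainder by $\sigma^2+\frac{d_x^2}{r^2}\overline{\sigma}^2$, with $\operatorname{Var}(A)\le\overline{\sigma}^2$ controlling the $\xi$-fluctuation of $A$. Making this cross-term bookkeeping yield precisely the constant $2$ is the main obstacle. The fallback is Cauchy–Schwarz/Young, which costs at most a constant, adjusting grouping so the stated constant appears.

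\textbf{Lower bound.} Condition on $(\xi,z)$. By the law of total variance, $\operatorname{Var}(g_{\mathrm{IND}})\ge\mathbb{E}\big[\operatorname{Var}(g_{\mathrm{IND}}\mid\xi,z)\big]$. Given $(\xi,z)$, the only randomness is in $B'$, so
\[
\operatorname{Var}(g_{\mathrm{IND}}\mid\xi,z)=\frac{d_x^2}{r^2}\operatorname{Var}(B')\|z\|^2.
\]
Taking expectation with $\mathbb{E}\|z\|^2=1$ and $\operatorname{Var}(B')=\operatorname{Var}(B)\ge\underline{\sigma}^2$ gives $\operatorname{Var}(g_{\mathrm{IND}})\ge\frac{\underline{\sigma}^2d_x^2}{r^2}$.
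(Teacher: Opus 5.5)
Your lower bound is correct and is essentially the paper's argument: both use the law of total variance. You condition on $(\xi,z)$ and keep only the $B'$ term, while the paper conditions on $z$ and drops $\operatorname{Var}(A\mid z)$.

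The upper bound, however, is not established. In your main decomposition $g_{\mathrm{IND}}=g_{\mathrm{CRN}}+\frac{d_x}{r}(B-B')z$, the noise of $B$ enters both summands. The cross term works out to $\frac{2d_x^2}{r^2}\mathbb{E}\big[\|z\|^2\big(\operatorname{Cov}(A,B\mid z)-\operatorname{Var}(B)\big)\big]$, which need not be nonpositive. For example, it is positive if $A$ and $B$ are almost perfectly correlated and $\operatorname{Var}(A\mid z)>\operatorname{Var}(B)$. So $\sigma^2+\frac{2\overline{\sigma}^2d_x^2}{r^2}$ plus this term does not give the claim. Once you have spent Lemma~\ref{lemma:esti_variance} to bound $\operatorname{Var}(g_{\mathrm{CRN}})$ by $\sigma^2$, you have discarded exactly the information needed to control the cross term. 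Your Cauchy--Schwarz/Young fallback then only yields something like $2\sigma^2+\frac{4\overline{\sigma}^2d_x^2}{r^2}$, not the stated constants. You flag this constant as the main obstacle; it is the entire content of the upper bound, and it is left open.

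The missing step is the orthogonal split you only gesture at in your first alternative: condition on $z$ alone. The conditional mean $\mathbb{E}[g_{\mathrm{IND}}\mid z]=\frac{d_x}{r}\big(h(x+rz,y)-h(x,y)\big)z$ is $z$-measurable. The residual $g_{\mathrm{IND}}-\mathbb{E}[g_{\mathrm{IND}}\mid z]=\frac{d_x}{r}\big[(A-h(x+rz,y))-(B'-h(x,y))\big]z$ has zero conditional mean given $z$. Hence the law of total variance splits $\operatorname{Var}(g_{\mathrm{IND}})$ exactly, with no cross term.

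Since $\xi$ and $\xi'$ are independent of each other and of $z$, the residual part equals $\frac{d_x^2}{r^2}\mathbb{E}\big[\|z\|^2\big(\operatorname{Var}(A\mid z)+\operatorname{Var}(B')\big)\big]\le\frac{2\overline{\sigma}^2d_x^2}{r^2}$. This reads the variance hypothesis on $A$ conditionally on $z$, as the paper also does implicitly. The $z$-measurable part has variance at most $\sigma^2$, by either of two arguments. First, it equals $\mathbb{E}[g_{\mathrm{CRN}}\mid z]$, so its variance is at most $\operatorname{Var}(g_{\mathrm{CRN}})\le\sigma^2$. Second, as the paper does, bound it by $3d_x\Lambda^2+\frac{1}{4}(d_x+2)(d_x+4)L_h^2r^2\le\sigma^2$ via the Berahas et al.\ lemma.

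This is precisely the paper's proof. Your three-piece split $(A-\mathbb{E}_\xi A)+(\mathbb{E}_\xi A-\mathbb{E}B)+(\mathbb{E}B-B')$ is the same decomposition. It closes once you verify that the first piece is orthogonal to the middle one by conditioning on $z$, and that the third piece is orthogonal to both by conditioning on $(\xi,z)$.
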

\begin{proof}[Proof of Proposition \ref{prop:var_bounds}]
% Conditioning on the perturbation direction $z$, we have
% $$
% \operatorname{Var}(g_{\mathrm{CRN}}\mid z) =\operatorname{Var}(\frac{d_x(A-B)}{r}\cdot z\mid z)=\frac{d_x^2\|z\|^2}{r^2}\operatorname{Var}(A-B\mid z)
% \leq \frac{d_x^2\|z\|^2}{r^2}\mathbb{E}[(A-B)^2\mid z].
% $$
% Using the inequality $|A-B|\leq \Lambda r\|z\|$ derived from Assumption \ref{assump:lipschitz}, we can get
% $$\operatorname{Var}(g_{\mathrm{CRN}})\leq \mathbb{E}[d_x^2\Lambda^2\|z\|^4]=d_x(d_x+2)\Lambda^2,$$
% where we also used the fact $\mathbb{E}[\|z\|^4]=1+2/d_x$.

Conditioning on the perturbation direction for the variance of $g_{\mathrm{IND}}$, we can obtain
$$\operatorname{Var}(g_{\mathrm{IND}}\mid z)=\operatorname{Var}\big(\frac{d_x(A-B')}{r}\cdot z \mid z\big)=\frac{d_x^2\|z\|^2}{r^2}\operatorname{Var}(A-B'\mid z)=\frac{d_x^2\|z\|^2}{r^2}\big(\operatorname{Var}(A\mid z)+\operatorname{Var}(B'\mid z)\big),$$
Using the condition that $\operatorname{Var}(B')\geq \underline{\sigma}^2$, we can derive
$$\operatorname{Var}(g_{\mathrm{IND}})\geq\mathbb{E}\big[\frac{d_x^2\|z\|^2\underline{\sigma}^2}{r^2}\big]=\frac{d_x^2\underline{\sigma}^2}{r^2},$$
where we used the fact $\mathbb{E}[\|z\|^2]=1$. As for the upper bound of $\operatorname{Var}(g_{\mathrm{IND}})$, we can decompose the variance by
\begin{align*}
&\mathbb{E}[\|g_{\mathrm{IND}}-\nabla_x h_1(x,y)\|^2]\\
=\; & \mathbb{E}[\|g_{\mathrm{IND}}-\mathbb{E}_{\xi,\xi'}[g_{\mathrm{IND}}]+\mathbb{E}_{\xi,\xi'}[g_{\mathrm{IND}}]-\nabla_x h_1(x,y)\|^2]\\
=\;& \mathbb{E}[\|g_{\mathrm{IND}}-\mathbb{E}_{\xi,\xi'}[g_{\mathrm{IND}}]\|^2]+\mathbb{E}[\|\hat{\nabla}_x h(x,y)-\nabla_x h_1(x,y)\|^2],
\end{align*}
where we used the facts $$\mathbb{E}[\langle g_{\mathrm{IND}}-\mathbb{E}_{\xi,\xi'}[g_{\mathrm{IND}}] , \hat{\nabla}_x h(x,y)-\nabla_x h_1(x,y) \rangle]=0,$$ 
and $\mathbb{E}_{\xi, \xi'}[g_{\mathrm{IND}}]=\hat{\nabla}_x h(x,y)$. The second term on the right-hand side can similarly be bounded by $3d_x\Lambda^2+\frac{1}{4}(d_x+2)(d_x+4)L_h^2r^2$. The first term can be further processed by
$$
\mathbb{E}[\|g_{\mathrm{IND}}-\mathbb{E}_{\xi,\xi'}[g_{\mathrm{IND}}]\|^2]=\mathbb{E}\left[\left\|\frac{d_x z}{r}\big(A-B'-h(x+rz,y)+h(x,y)\big) \right\|^2\right]\leq \frac{2d_x^2\overline{\sigma}^2}{r^2}.$$
Therefore, we can get the upper bound of $\sigma^2+\frac{2d_x^2\overline{\sigma}^2}{r^2}$ for $\operatorname{Var}(g_{\mathrm{IND}})$. \Halmos
\end{proof}
Proposition \ref{prop:var_bounds} shows that the lower bound of $\operatorname{Var}(g_{\mathrm{IND}})$ is proportional to $r^{-2}$. Since $r$ controls the perturbation magnitude and is often set very small to reduce the estimation bias, $\operatorname{Var}(g_{\mathrm{IND}})$ can be very large under independent randomness. However, the upper bound of $\operatorname{Var}(g_{\mathrm{CRN}})$ is $\sigma^2$ as in Lemma \ref{lemma:esti_variance}, which can significantly reduce the variance when $r$ is small.
The following corollary further characterizes the variance reduction effect from the use of CRNs.

\begin{corollary}\label{coro:crn_VR}
Suppose that the conditions in Proposition \ref{prop:var_bounds} hold. Set $r\leq \frac{\underline{\sigma}d_x}{\sigma }$. Then, we have $\operatorname{Var}(g_{\mathrm{IND}})\geq \operatorname{Var}(g_{\mathrm{CRN}})$.    
\end{corollary}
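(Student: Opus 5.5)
The plan is to sandwich the two variances between a common constant. For the CRN estimator, Lemma \ref{lemma:esti_variance} already gives the upper bound $\operatorname{Var}(g_{\mathrm{CRN}})\le \sigma^2$. Here $\operatorname{Var}$ of a random vector is read, as in the proof of Proposition \ref{prop:var_bounds}, as the expected squared deviation from $\nabla_x h_1(x,y)$. This is the mean of $g_{\mathrm{CRN}}$ by Lemma \ref{lemma:esti_bias}(2) after also averaging over $\xi$, and the mean of $g_{\mathrm{IND}}$ as well, since the two estimators have the same expectation: $\mathbb{E}[B']=\mathbb{E}[B]=h(x,y)$ and $z$ is independent of $\xi'$. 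For the independent estimator, Proposition \ref{prop:var_bounds} supplies the lower bound $\operatorname{Var}(g_{\mathrm{IND}})\ge \underline{\sigma}^2 d_x^2/r^2$. So it suffices to show $\underline{\sigma}^2 d_x^2/r^2\ge \sigma^2$.

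That last inequality is just algebra in the step-size condition. Since $r>0$, the hypothesis $r\le \underline{\sigma}d_x/\sigma$ gives $r^2\le \underline{\sigma}^2 d_x^2/\sigma^2$. Rearranging yields $\underline{\sigma}^2 d_x^2/r^2\ge\sigma^2$. Chaining the three inequalities then gives
\begin{align*}
\operatorname{Var}(g_{\mathrm{IND}})\;\ge\;\frac{\underline{\sigma}^2 d_x^2}{r^2}\;\ge\;\sigma^2\;\ge\;\operatorname{Var}(g_{\mathrm{CRN}}).
\end{align*}

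The only delicate point is that $\sigma^2$ itself depends on $r$, through the term $\tfrac14(d+2)(d+4)L_h^2r^2$, so the condition $r\le\underline{\sigma}d_x/\sigma$ is implicit in $r$. I would handle this by treating $\sigma$ as evaluated at the chosen $r$: the chain above is valid for whatever $r$ satisfies the condition, so this dependence does not break the argument. It is still worth remarking that the condition is satisfiable. As $r\to 0$, $\sigma$ stays bounded away from $0$ and stays bounded above, while $\underline{\sigma}d_x/r\to\infty$. Hence every sufficiently small $r$ works, and the statement is non-vacuous.
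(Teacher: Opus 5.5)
Your proposal is correct and follows the same route the paper implicitly takes. It chains the lower bound $\underline{\sigma}^2 d_x^2/r^2$ on $\operatorname{Var}(g_{\mathrm{IND}})$ from Proposition~\ref{prop:var_bounds} through $\sigma^2$, using the condition on $r$, down to the bound $\operatorname{Var}(g_{\mathrm{CRN}})\le\sigma^2$ from Lemma~\ref{lemma:esti_variance}. Your remarks on the common mean $\nabla_x h_1(x,y)$ and on the $r$-dependence of $\sigma$ are consistent with the paper and make explicit points it leaves unstated.
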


Corollary \ref{coro:crn_VR} indicates that using a sufficiently small smoothing radius in the CRN setting results in a variance not higher than that of independent randomness. Note that only the case of $\underline{\sigma}>0$ is considered here, since it is obvious
that $\operatorname{Var}(g_{\mathrm{IND}})= \operatorname{Var}(g_{\mathrm{CRN}})$ if $\underline{\sigma}=0$ for some $(x,y)\in\mathcal{X}\times\mathcal{Y}$, regardless of the value of $r$.
Let $\sigma_{\mathrm{CRN}}^2$ denote the upper variance bound of gradient estimation for CRN implementation, as established in Lemma \ref{lemma:esti_variance}, and let $\sigma_{\mathrm{IND}}^2$ denote the corresponding upper bound for independent randomness. Proposition \ref{prop:var_bounds} and Corollary \ref{coro:crn_VR} suggest that we can select a better variance bound for $\sigma_{\mathrm{CRN}}^2$ as in Lemma \ref{lemma:esti_variance} for the CRN implementation that is no larger than $\sigma_{\mathrm{IND}}^2$.

\subsection{Min-Max Gradient Search Algorithm} \label{subsec:mgs_algo}
\begin{algorithm}[htbp]
	\caption{Min-max gradient search (MGS)}
	\label{algorithm:MGS} 
	\begin{algorithmic}[1]
		\State \textbf{Input:} Initial $x_1,y_1$, maximum steps $T$, and the parameters $\{\eta_t\}_{t=1}^{T}, \{\alpha_t, \beta_t\}_{t=2}^{T+1}$ and $\{\mu, \gamma, \lambda\}$.
        \State Set $f(x,y):= h(x,y)-\frac{\mu}{2}\|y\|^2$. Sample an initial set $\mathcal{S}_1=\{\zeta_1^i\}_{i=1}^q$ and set $v_1=\hat{\nabla}_x f(x_1,y_1;\mathcal{S}_1),w_1=\hat{\nabla}_y f(x_1,y_1;\mathcal{S}_1)$.
        \For{$t \gets 1$ \textbf{to} $T$}
        \State Update $x_t$ by: $x_{t+1}=x_t+\eta_t(\Tilde{x}_{t+1}-x_t)$, where $\Tilde{x}_{t+1}=\mathcal{P}_\mathcal{X}[x_t-\gamma v_t]$.
        \State Update $y_t$ by: $y_{t+1}=y_t+\eta_t(\Tilde{y}_{t+1}-y_t)$, where $\Tilde{y}_{t+1}=\mathcal{P}_{\mathcal{Y}}[y_t+\lambda w_t]$.
        \State Draw a set of i.i.d. samples $\mathcal{S}_{t+1}=\{\zeta^i_{t+1}\}_{i=1}^q$, and calculate
        $$v_{t+1}=\hat{\nabla}_x f(x_{t+1},y_{t+1};\mathcal{S}_{t+1})+(1-\alpha_{t+1})\big(v_t-\hat{\nabla}_x f(x_t,y_t;\mathcal{S}_{t+1})\big),$$
        $$w_{t+1}= \hat{\nabla}_y f(x_{t+1},y_{t+1};\mathcal{S}_{t+1})+(1-\beta_{t+1})\big(w_t-\hat{\nabla}_y f(x_t,y_t;\mathcal{S}_{t+1})\big).$$
	\EndFor
        \State \textbf{Output:} $\{(x_t,y_t)\}_{t=1}^{T+1}$
	\end{algorithmic}
\end{algorithm}
In contrast to gradient descent approaches designed for projection-friendly constraints \citep{peng2016gradient,cakmak2021solving,hu2022stochastic}, the proposed min-max gradient search (MGS) algorithm (see Algorithm \ref{algorithm:MGS}) is built on the GDA framework, in which we apply gradient descent to the primal variables and gradient ascent to the dual variables.  
The MGS algorithm is built on two main ideas. First, it is well established that standard GDA enjoys improved convergence guarantees when $h(x,y)$ is strongly concave in $y$ \citep{lin2020gradient}. Inspired by the alternative gradient projection (AGP) method \citep{xu2023unified}, we introduce the auxiliary function
$$f(x,y):=h(x,y)-\frac{\mu}{2}\|y\|^2,$$
where $\mu>0$. In situations where $h(x,y)$ is only concave in $y$, such as $h(x,y)=x^{T}y$, applying vanilla GDA to $h(x,y)$ may fail to converge and instead enter limit cycles. 
The regularized function $f(x,y)$ is $\mu$-strongly concave in $y$, and the strong-concavity regularization term acts as a damping factor in the dual ascent dynamics to mitigate oscillations and promote convergence. Generally, $\mu$ is chosen small to stabilize the dual ascent dynamics while limiting the distortion of the original Lagrangian. We then apply GDA to $f(x,y)$ rather than to $h(x,y)$ (see Lines 4 and 5). 
% is taken appropriately to introduce strong concavity to $f(x,\cdot)$ in $y$ while limiting the distortion of $h(x,y)$. 

The second idea is to apply the batched update to further reduce variance and use momentum-based updates to accelerate convergence. The batched update involves taking an average over multiple replications of gradient computations with varying CRNs to further reduce the variance. Specifically, for each gradient estimation, we sample $q> 0$ independent sets of random vectors $\mathcal{S}=\{\zeta^i\}_{i=1}^q=\left\{\{\xi^i,z^i,e^i\}_{i=1}^q\right \}$, and calculate \eqref{eq:grad_x_single} and \eqref{eq:grad_y_single} for $q$ replications. Then, we get the averaged gradient estimation:
$$\hat{\nabla}_x h(x,y;\mathcal{S})=\frac{1}{q}\sum_{i=1}^q\hat{\nabla}_x h(x,y;\zeta^i), \quad \hat{\nabla}_y h(x,y;\mathcal{S})=\frac{1}{q}\sum_{i=1}^q\hat{\nabla}_y h(x,y;\zeta^i).$$
In MGS, we can easily derive $\hat{\nabla}_x f(x,y;\mathcal{S})=\hat{\nabla}_x h(x,y;\mathcal{S})$ and $\hat{\nabla}_y f(x,y;\mathcal{S})=\hat{\nabla}_y h(x,y;\mathcal{S})-\mu y$.
Momentum-based updates (Line 6) combine current and historical gradient information to smooth out noisy updates and advance along directions of consistent improvement, which can accelerate convergence \citep{cutkosky2019momentum}.

\section{Convergence Analysis}\label{sec:convergence_analysis}
This section presents the non-asymptotic analysis of the proposed MGS algorithm for solving problem \eqref{eq:minmax_problem} and some discussion about our results. 
\subsection{Non-asymptotic Convergence of MGS}
Unlike most of the gradient-based optimization literature that assumes (strong) convexity of the objective function \citep{xu2023gradient,hu2025convergence}, we focus on the nonconvex setting, as is often the case in SO. Stationarity is commonly used to characterize the performance of solutions for nonconvex problems, because it is a necessary condition for (local) optimality \citep{zhou2017gradient}. For constrained problems, a point is stationary if its negative gradient is orthogonal to feasible directions. A point $(x,y)$ is stationary for problem \eqref{eq:minmax_problem} if and only if $x=\mathcal{P}_\mathcal{X}[x-\gamma \nabla_x h(x,y)]$ and $y=\mathcal{P}_\mathcal{Y}[y+\lambda \nabla_y h(x,y)]$. Therefore, the following projected gradient is widely applied to measure the stationarity: $$\mathfrak{g}(x,y)=
\begin{pmatrix}
\mathfrak{g}_x(x,y)\\
\mathfrak{g}_y(x,y)
\end{pmatrix}
=
\begin{pmatrix}
    \frac{1}{\gamma}\left(x-\mathcal{P}_\mathcal{X}[x-\gamma \nabla_x h(x,y)]\right) \\
    \frac{1}{\lambda}\left(y-\mathcal{P}_\mathcal{Y}[y+\lambda \nabla_y h(x,y)]\right)
\end{pmatrix}$$ 
for any $(x,y)\in\mathcal{X}\times\mathcal{Y}$ \citep{xu2023unified,huang2022accelerated}. Any point $(x,y)$ satisfying $\|\mathfrak{g}(x,y)\|=0$ is a first-order stationary point of $h(x,y)$.
Then, we define the $\epsilon$-stationary point as follows.
\begin{definition}
We say a point $(x,y)\in\mathcal{X}\times\mathcal{Y}$ is $\epsilon$-stationary if $\|\mathfrak{g}(x,y)\|^2\leq \epsilon$.
\end{definition} 
% \begin{align*}
% &\mathfrak{g}(x_t,y_t)=
% \begin{pmatrix}
%     \mathfrak{g}_x(x_t,y_t) \\
%     \mathfrak{g}_y(x_t,y_t)
% \end{pmatrix}\\
% &=
% \begin{pmatrix}
%     \frac{1}{\gamma}\left(x_t-\mathcal{P}_\mathcal{X}[x_t-\gamma \nabla_x h(x_t,y_t)]\right) \\
%     \frac{1}{\lambda}\left(y_t-\mathcal{P}_\mathcal{Y}[y_t+\lambda \nabla_y h(x_t,y_t)]\right)
% \end{pmatrix}.
% \end{align*}
% This stationarity measure is widely applied for nonconvex optimization \citep{xu2023unified,huang2022accelerated}. 
% Define $F(x)=\max_{y\in\mathcal{Y}}f(x,y)$ and $y^*(x)=\arg\max_{y\in\mathcal{Y} }f(x,y)$. 
Similarly, we denote 
$f_{1}(x,y)=\mathbb{E}_{z}[f(x+rz,y)]$, and $f_{2}(x,y)=\mathbb{E}_{e}[f(x,y+re)]$ as the expected gradient estimates of $f(x,y)$. Furthermore, we define 
$$F_r(x)=\max_{y\in\mathcal{Y}}f_1(x,y),\quad y_r^*(x)=\arg\max_{y\in\mathcal{Y} }f_1(x,y), \quad F_r^*=\min_{x\in\mathcal{X}}F_r(x).$$ 
It can be easily shown that $\hat{\nabla}_x f(x,y;\zeta)$ (and $\hat{\nabla}_y f(x,y;\zeta)$) is also an unbiased estimator of $\nabla_x f_{1}(x,y)$ (and $\nabla_y f_{2}(x,y)$) with a finite variance bound $\sigma^2$. Define the constants $L=L_h+\mu$, $\kappa=L/\mu$, and $L_F=L+\kappa L$. Let $\Bar{y}\in\mathcal{Y}$ be the upper bound of $y$, and $D_y=\|\Bar{y}\|$. We provide our main results in the following theorem.

\begin{theorem}[Convergence Rate of MGS]\label{thm:Opt}
Suppose that Assumptions \ref{assump:bounded}-\ref{assump:bounded_variance} hold and the sequence $\{(x_t,y_t)\}_{t=1}^{T+1}$ is generated by Algorithm \ref{algorithm:MGS}. Set $\eta_t=(m+t)^{-\frac{1}{3}}, r\leq\frac{1}{L\sqrt{d}}(m+T)^{-\frac{2}{3}}$, and $\mu=(m+T)^{-\frac{2}{15-3e}}$ for some $e\in [0,1]$, where $m>0$ is set to satisfy $\mu\leq \min\big (\frac{2}{3},\frac{2L}{3}\big)$ and $\eta_0\leq \frac{1}{c}$ for some $c\geq \max \left( 2, 192L^2 \right)$. Let $\alpha_{t+1}=\beta_{t+1}=c\eta_t^2$, and $q=\lceil\frac{3(d+4)}{5L^2\mu^e}\rceil$. Let $ \gamma=k_1 \mu^{3-e}, \lambda=k_2 \mu^{1-e}$ where $k_1, k_2$ are positive constants satisfying $k_1\leq \min \left( \frac{1}{2L_F\eta_0\mu^{3-e}}, \frac{1}{4\sqrt{10}L^2\mu^{2-e}}, \frac{ k_2}{8\sqrt{5}L^2 }\right)$, and $\frac{\mu^e}{4L^2}\leq k_2\leq \min\left(\frac{1}{6L\mu^{1-e}},\frac{5q\mu^e}{12(d+4)}\right)$. Then, we have 
$$
\min_{t\leq T} \mathbb{E}\left[\left\|\mathfrak{g}(x_t,y_t)\right\|^2\right]\leq 
\frac{R_1}{T} (m+T)^{\frac{11-3e}{15-3e}}
+ \frac{R_2\ln(m+T)}{T}(m+T)^{\frac{9-e}{15-3e}}+R_3(m+T)^{-\frac{4}{15-3e}},
$$
where $R_1=\frac{16}{k_1}\big(F_r(x_1)+\frac{8L^2k_1\mu}{k_2} D_y^2+ \frac{2k_1\mu^{1-e}\sigma^2}{\eta_{0}q}
-F_r^*\big)$, $R_2=\frac{3200L^2}{d}+\frac{64c^2\sigma^2}{q}$, and $R_3=4D_y^2+\frac{7}{d}+\frac{720d}{q}$.
% \begin{align*}
% \min_{t\leq T} \mathbb{E}\left[\left\|\mathfrak{g}(x_t,y_t)\right\|^2\right]\leq O\left(T^{-\frac{4}{15-3e}}\right).
% \end{align*}
\end{theorem}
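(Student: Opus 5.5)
The proof will follow the potential-function analysis standard for single-loop momentum GDA on nonconvex–strongly-concave problems (as in AGP and momentum-based variance reduction). It is carried out on the smoothed, regularized surrogate $f_1(x,y)=\mathbb{E}_z[h(x+rz,y)]-\frac{\mu}{2}\|y\|^2$ and then transferred back to $h$.

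\emph{Surrogate properties.} By Lemma~\ref{lemma:esti_bias}, $f_1$ is $L$-smooth and $\mu$-strongly concave in $y$. Danskin's theorem then gives that $F_r(x)=\max_{y\in\mathcal{Y}}f_1(x,y)$ is differentiable with $\nabla F_r(x)=\nabla_x f_1(x,y_r^*(x))$. Moreover $y_r^*(\cdot)$ is $\kappa$-Lipschitz, so $F_r$ is $L_F$-smooth.

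\emph{Descent inequalities.}
\begin{itemize}
\item Primal step. Using the update $x_{t+1}=x_t+\eta_t(\tilde x_{t+1}-x_t)$ and the projection inequality, obtain
$F_r(x_{t+1})\le F_r(x_t)-\frac{\eta_t}{2\gamma}\|\tilde x_{t+1}-x_t\|^2+\eta_t\gamma\|v_t-\nabla_x f_1(x_t,y_t)\|^2+\eta_t\gamma L^2\|y_t-y_r^*(x_t)\|^2$.
This uses $\gamma\le 1/(2L_F\eta_0)$.
\item Dual step. By strong concavity and the standard projected-ascent contraction with $\lambda\le 1/(6L)$, obtain
$\|y_{t+1}-y_r^*(x_{t+1})\|^2\le(1-\frac{\eta_t\mu\lambda}{4})\|y_t-y_r^*(x_t)\|^2-c'\eta_t\|\tilde y_{t+1}-y_t\|^2+\frac{C\eta_t\kappa^2}{\mu\lambda}\|\tilde x_{t+1}-x_t\|^2+C\eta_t\frac{\lambda}{\mu}\|w_t-\nabla_y f_2(x_t,y_t)\|^2$.
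It also carries an extra term from the difference between $\nabla_y f_2$ and $\nabla_y f_1$, which is $O(L_hr)$ and is absorbed by the choice of $r$.
\item Momentum error. For the STORM-type estimator, using $\alpha_{t+1}=c\eta_t^2$, the batch size $q$, and the smoothness of the estimator in the sample, obtain
$\mathbb{E}\|v_{t+1}-\nabla_x f_1(x_{t+1},y_{t+1})\|^2\le(1-\alpha_{t+1})^2\mathbb{E}\|v_t-\nabla_x f_1(x_t,y_t)\|^2+\frac{2\alpha_{t+1}^2\sigma^2}{q}+\frac{C(d+4)L^2}{q}\eta_t^2\big(\|\tilde x_{t+1}-x_t\|^2+\|\tilde y_{t+1}-y_t\|^2\big)$.
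The analogous bound holds for $w_t$. The factor $(d+4)$ is the mean-squared smoothness of the Gaussian estimator, which is why $q\propto (d+4)/\mu^e$.
\end{itemize}

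\emph{Potential function.} Set
$\Phi_t=F_r(x_t)+\frac{a_1\gamma L^2}{\mu\lambda}\|y_t-y_r^*(x_t)\|^2+\frac{a_2\gamma}{\eta_{t-1}}\big(\|v_t-\nabla_x f_1\|^2+\|w_t-\nabla_y f_2\|^2\big)$.
The constants $a_i$ are chosen so that all cross terms cancel; this is where the constraints on $k_1,k_2$ and $c\ge192L^2$ enter. The result should be
$\mathbb{E}[\Phi_{t+1}-\Phi_t]\le-\frac{\eta_t}{16\gamma}\mathbb{E}\|\tilde x_{t+1}-x_t\|^2-\frac{\eta_t\gamma}{\lambda^2}\mathbb{E}\|\tilde y_{t+1}-y_t\|^2-(\cdots)+O\big(\frac{c^2\eta_t^3\gamma\sigma^2}{q}\big)$.
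Telescoping and using $\eta_t=(m+t)^{-1/3}$, the sum $\sum\eta_t^3$ produces the $\ln(m+T)$ factor, and $\Phi_1-F_r^*$ (with $\|y_1-y_r^*\|\le D_y$) produces $R_1$.

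\emph{Return to the true measure.} Bound $\|\mathfrak g(x_t,y_t)\|^2$ by a constant times $\frac{1}{\gamma^2}\|\tilde x_{t+1}-x_t\|^2+\frac{1}{\lambda^2}\|\tilde y_{t+1}-y_t\|^2$ plus the gradient-estimation errors, using nonexpansiveness of projections. Three additional gaps remain:
\begin{itemize}
\item the smoothing bias $L_hr$, which gives the $7/d$ term via $r\le(L\sqrt d)^{-1}(m+T)^{-2/3}$;
\item the regularization gap $\mu\|y\|\le\mu D_y$, which gives the $4D_y^2$ term;
\item the residual variance, which gives $720d/q$.
\end{itemize}
All three are multiplied by $\mu^2$-type factors, giving $(m+T)^{-4/(15-3e)}$. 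Dividing the telescoped sum by $\sum_t\eta_t\gtrsim T(m+T)^{-1/3}$ and substituting $\gamma=k_1\mu^{3-e}$, $\lambda=k_2\mu^{1-e}$, $\mu=(m+T)^{-2/(15-3e)}$ yields the exponents $\frac{11-3e}{15-3e}$ and $\frac{9-e}{15-3e}$.

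\emph{Main obstacle.} The hard step is the potential construction. The dual tracking error contracts only at rate $\eta_t\mu\lambda\sim\mu^{2-e}\eta_t$, while the drift of $y_r^*$ costs $\kappa^2\sim\mu^{-2}$. Absorbing this into the primal descent therefore forces $\gamma\sim\mu^{3-e}$, and the weights must be balanced exactly so that these $\mu$-powers are consistent. I would first fix the weights to kill the $\|\tilde x-x\|^2$ cross term and then verify the remaining inequalities against the stated bounds on $k_1,k_2$.
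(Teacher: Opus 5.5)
Your architecture matches the paper's proof: the same potential (your $\frac{a_1\gamma L^2}{\mu\lambda}$ with $a_1=8$ is the paper's $\frac{8L^2k_1\mu}{k_2}$), the same three one-step lemmas, telescoping, and the projected-gradient bound. However, two intermediate claims are wrong as stated, and these are exactly where the theorem's exponents and constants come from.

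First, the momentum-error weight cannot be $a_2\gamma/\eta_{t-1}$ with $a_2$ an absolute constant. Multiplying the dual-tracking term $\frac{12\eta_t\lambda}{\mu}\|\Delta_w^t\|^2$ by $\frac{8L^2\gamma}{\mu\lambda}$ gives $\frac{96L^2\gamma\eta_t}{\mu^2}\|\Delta_w^t\|^2$. The STORM recursion only supplies $-a_2c\gamma\eta_t\|\Delta_w^t\|^2$, and $c$ is a fixed constant (it needs $\eta_0\le 1/c$, and $c^2$ enters $R_2$). Hence $a_2\asymp\mu^{-2}$, and the paper indeed uses the weight $\frac{\gamma}{\mu^2\eta_{t-1}}$ together with $c\ge 192L^2$. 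Consequently the noise residual in the drift is $\frac{4c^2\eta_t^3\sigma^2\gamma}{q\mu^2}$, not $O(c^2\eta_t^3\gamma\sigma^2/q)$. After dividing by $\gamma\eta_T$, the factor $\mu^{-2}\eta_T^{-1}=(m+T)^{\frac{9-e}{15-3e}}$ is precisely the theorem's second exponent. Your stated residual would instead yield $(m+T)^{1/3}$, so your conclusion does not follow from your own drift inequality. The same $\mu^{-2}$ produces the $\frac{2k_1\mu^{1-e}\sigma^2}{\eta_0 q}$ term in $R_1$. It is also what forces $q\propto(d+4)\mu^{-e}$. The momentum lemmas put $\frac{12L^2\gamma(d+4)\eta_t}{q\mu^2}\|\Delta_y^t\|^2$ into the drift, and the dual-tracking term $-\frac{6L^2\gamma\eta_t}{\mu\lambda}\|\Delta_y^t\|^2$ absorbs it only if $k_2\le\frac{5q\mu^e}{12(d+4)}$. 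Yet $k_2$ must stay of order one, since $k_1\le\frac{k_2}{8\sqrt{5}L^2}$ and $R_1\propto 1/k_1$.

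Second, your STORM recursion has no additive smoothing term, and without one it is false under the stated assumptions. When only gradients are Lipschitz, the two-point Gaussian estimator is not mean-squared smooth with constant $O(d)L^2$. For example, take $f(x)=\langle v,x\rangle\|x\|$, whose gradient is $3$-Lipschitz, with $\|v\|=1$, $x_2=0$, $x_1=\delta v$ and $\delta\ll r$. The estimator increment then has second moment $\approx d^2\delta^2$. The mean-value bound only gives $d_x(d_x+2)L^2$, which would force $q\propto d^2$. The paper instead Taylor-expands both finite differences. The linear parts give $\frac{6L^2(d_x+2)}{q}$ via $\mathbb{E}[\|z\|^2zz^{T}]=\frac{d_x+2}{d_x^2}I$, and the remainders give the additive term $\frac{45L^2d_x^2r^2}{2q}$ via $\mathbb{E}\|z\|^6\le 15$. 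That additive term, weighted by $\frac{\gamma}{\mu^2\eta_t}$ and divided by $\gamma\eta_T$, is the actual source of $\frac{720d}{q}$ in $R_3$. With $r\le(L\sqrt{d})^{-1}(m+T)^{-2/3}$ it becomes $\frac{720d}{q}(m+T)^{-\frac{6-2e}{15-3e}}\le\frac{720d}{q}(m+T)^{-\frac{4}{15-3e}}$. So it is not a ``residual variance,'' and it is not a $\mu^2$-type term. Nor is $7L^2r^2$, which is small by the choice of $r$ alone; only $4\mu^2D_y^2$ is a genuine $\mu^2$ term. With these two corrections, your sketch becomes the paper's proof.
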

\begin{proof}[Proof of Theorem \ref{thm:Opt}]
For notational simplicity, we further define the following notation:
$$\Delta_x^t=\Tilde{x}_{t+1}-x_t,\quad \Delta_y^t=\Tilde{y}_{t+1}-y_t,\quad \Delta_*^t=y_r^*(x_t)-y_t,$$ 
$$\Delta_F^t=F_r(x_{t+1})-F_r(x_t),\quad \Delta_v^t=\nabla_x f_1(x_t,y_t)-v_t,\quad \Delta_w^t=\nabla_y f_2(x_t,y_t)-w_t.$$

The proof is roughly divided into \textit{three} steps. First, we try to derive a bound for $\|\mathfrak{g}(x_t,y_t)\|^2$ consisting of $\Delta_x^t, \Delta_y^t, \Delta_v^t,$ and $\Delta_w^t$. Then, we design a potential function $\phi (x_t)$ and establish a bound on the one-step drift of $\phi(x_t)$  using the four terms. Finally, we take the telescoping sum of the above bounds to derive the final bound for $\|\mathfrak{g}(x_t,y_t)\|^2$. We will elaborate on the three steps in detail. Note that the proofs of the following lemmas are provided in Appendix \ref{app:proofs_thm_opt}.

\paragraph{Step 1: Construct a bound on $\|\mathfrak{g}(x_t,y_t)\|^2$.} We provide this bound in the following lemma.
\begin{lemma}\label{lemma:measure_bound}
$\left\|\mathfrak{g}(x_t,y_t)\right\|^2\leq \frac{3}{\gamma^2}\|\Delta_x^t\|^2+3\|\Delta_v^t\|^2+\frac{4}{\lambda^2}\|\Delta_y^t\|^2+4\|\Delta_w^t\|^2+7L^2r^2+4\mu^2D_y^2.$
\end{lemma}

\paragraph{Step 2: Design a potential function and bound its one-step drift.} Define the potential function as 
$$\phi (x_t)=\mathbb{E}\left[F_r(x_t)+\frac{8L^2k_1\mu }{k_2}\|\Delta_*^t\|^2+\frac{\gamma}{\mu^2\eta_{t-1}}\left( \|\Delta_v^t\|^2+\|\Delta_w^t\|^2\right)\right],$$ 
for any $t\geq 1$. For simplicity, denote $\phi_t=\phi(x_t)$. Before establishing the bound on the one-step drift of $\phi_t$, we provide the following three lemmas to bound the one-step drift of $F_r(x_t), \|\Delta_*^t\|^2, \frac{1}{\eta_{t-1}}\|\Delta_v^t\|^2$, and $\frac{1}{\eta_{t-1}}\|\Delta_w^t\|^2$, respectively.
\begin{lemma}\label{lemma:F_diff_bound}
For any $t\geq 1$, $\Delta_F^t\leq 2 L^2\gamma \eta_t \|\Delta_*^t\|^2
+2\gamma\eta_t\|\Delta_v^t\|^2-\frac{\eta_t}{2\gamma}\|\Delta_x^t\|^2$.
% \begin{align*}
% \Delta_F^t\leq & 2 L^2\gamma \eta_t \|y_r^*(x_t)-y_t\|^2\\
% &+2\gamma\eta_t\|\Delta_v^t\|^2-\frac{\eta_t}{2\gamma}\|\Delta_x^t\|^2.
% \end{align*}
\end{lemma}
% Then, the following lemma provides the upper bound of one-step drift of $\Delta_*^t$.
\begin{lemma}\label{lemma:y_diff_bound}
$\|\Delta_*^{t+1}\|^2-\|\Delta_*^t\|^2
\leq  -\frac{\eta_t\mu \lambda}{4}\|\Delta_*^t\|^2\!-\!\frac{3\eta_t}{4}\|\Delta_y^t\|^2\!+\!\frac{12\eta_t\lambda}{\mu }\|\Delta_w^t\|^2
 +\frac{5\kappa^2 \eta_t}{\mu\lambda}\|\Delta_x^t\|^2+\frac{24L^2r^2\eta_t\lambda}{\mu}.$
% \begin{align*}
% &\|\Delta_*^{t+1}\|^2-\|\Delta_*^t\|^2\\
% \leq & -\frac{\eta_t\mu \lambda}{4}\|\Delta_*^t\|^2\!-\!\frac{3\eta_t}{4}\|\Delta_y^t\|^2\!+\!\frac{12\eta_t\lambda}{\mu }\|\Delta_w^t\|^2\\
% & +\frac{5\kappa^2 \eta_t}{\mu\lambda}\|\Delta_x^t\|^2+\frac{3L^2d_yr^2\eta_t\lambda}{\mu}.
% \end{align*}
\end{lemma}

\begin{lemma}\label{lemma:v_error_bound}
% Define the filtration $\mathcal{F}_t=\sigma((x_0,y_0),\mathcal{S}_0,(x_1,y_1),\cdots,\mathcal{S}_{t-1},(x_{t},y_{t}))$. Then, 
For any $t\geq 1$, we have
\begin{align*}
&\mathbb{E}\big[\frac{1}{\eta_t}\|\Delta_v^{t+1}\|^2-\frac{1}{\eta_{t-1}}\|\Delta_v^t\|^2\big]\\
\leq\;&  -\frac{\alpha_{t+1}}{\eta_t}\mathbb{E}\left[ \|\Delta_v^t\|^2 \right]+\frac{12L^2(d_x+2)\eta_t}{q}\mathbb{E}\left[ \|\Delta_x^t\|^2+\| \Delta_y^t \|^2 \right]
 +\frac{45L^2d_x^2 r^2}{q\eta_t}+\frac{2\alpha_{t+1}^2\sigma^2}{q\eta_t},\\
& \mathbb{E}\big[\frac{1}{\eta_t}\|\Delta_w^{t+1}\|^2-\frac{1}{\eta_{t-1}}\|\Delta_w^t\|^2\big]\\
\leq\; & -\frac{\beta_{t+1}}{\eta_t}\mathbb{E}\left[ \|\Delta_w^t\|^2 \right]+\frac{12L^2(d_y+2)\eta_t}{q}\mathbb{E}\left[ \|\Delta_x^t\|^2+\| \Delta_y^t \|^2 \right]
 +\frac{45L^2d_y^2 r^2}{q\eta_t}+\frac{2\beta_{t+1}^2\sigma^2}{q\eta_t}.
\end{align*}
% $$\mathbb{E}\left[\frac{1}{\eta_t}\|\Delta_v^{t+1}\|^2-\frac{1}{\eta_{t-1}}\|\Delta_v^t\|^2\right]
% \leq  -\frac{\alpha_{t+1}}{\eta_t}\mathbb{E}\left[ \|\Delta_v^t\|^2 \right]+\frac{12L^2(d_x+2)\eta_t}{q}\mathbb{E}\left[ \|\Delta_x^t\|^2+\| \Delta_y^t \|^2 \right]
%  +\frac{45L^2d_x^2 r^2}{q\eta_t}+\frac{2\alpha_{t+1}^2\sigma^2}{q\eta_t},$$
%  $$\mathbb{E}\left[\frac{1}{\eta_t}\|\Delta_w^{t+1}\|^2-\frac{1}{\eta_{t-1}}\|\Delta_w^t\|^2\right]
% \leq -\frac{\beta_{t+1}}{\eta_t}\mathbb{E}\left[ \|\Delta_w^t\|^2 \right]+\frac{12L^2(d_y+2)\eta_t}{q}\mathbb{E}\left[ \|\Delta_x^t\|^2+\| \Delta_y^t \|^2 \right]
%  +\frac{45L^2d_y^2 r^2}{q\eta_t}+\frac{2\beta_{t+1}^2\sigma^2}{q\eta_t}.$$
% \begin{align*}
% &\mathbb{E}\left[\frac{1}{\eta_t}\|\Delta_v^{t+1}\|^2-\frac{1}{\eta_{t-1}}\|\Delta_v^t\|^2\right]\\
% \leq & \!-\!\frac{\alpha_{t+1}}{\eta_t}\mathbb{E}\left[\! \|\Delta_v^t\|^2 \!\right]\!+\!\frac{6L^2\!(d_x\!+\!2)\eta_t}{q}\mathbb{E}\left[\! \|\Delta_x^t\|^2\!+\!\| \Delta_y^t \|^2 \!\right]\\
% & +\frac{3L^2d_x^2r^2}{q\eta_t}+\frac{2\alpha_{t+1}^2\sigma^2}{q\eta_t}.
% \end{align*}
% \begin{align*}
% &\mathbb{E}\left[\left.\frac{1}{\eta_t}\|\Delta_w^{t+1}\|^2-\frac{1}{\eta_{t-1}}\|\Delta_w^t\|^2\right| \mathcal{F}_{t}\right]\\
% \leq & \!-\!\frac{\beta_{t+1}}{\eta_t}\mathbb{E}\left[\! \|\Delta_w^t\|^2 \!\right]\!+\!\frac{6L^2\!(d_y\!+\!2)\eta_t}{q}\mathbb{E}\left[\! \|\Delta_x^t\|^2\!\!+\!\| \Delta_y^t \|^2 \!\right]\\
% & +\frac{3L^2d_y^2 r^2}{q\eta_t}+\frac{2\beta_{t+1}^2\sigma^2}{q\eta_t}.
% \end{align*}
\end{lemma}

Then, combining the results in Lemmas \ref{lemma:F_diff_bound}, \ref{lemma:y_diff_bound}, and \ref{lemma:v_error_bound}, we can derive a bound for $\phi_{t+1}-\phi_t$:
\begin{align}
\phi_{t+1}-\phi_t
% \leq &\mathbb{E}\left[\Delta_F^t +\frac{8L^2k_1\mu}{k_2}\left(\|\Delta_*^{t+1}\|^2-\|\Delta_*^t\|^2\right)\right]\\
% & + \frac{\gamma}{\mu^2}\mathbb{E}\left[\frac{1}{\eta_t}\|\Delta_v^{t+1}\|^2-\frac{1}{\eta_{t-1}}\|\Delta_v^t\|^2\right]\\
% & +\frac{\gamma}{\mu^2}\mathbb{E}\left[\frac{1}{\eta_t}\|\Delta_w^{t+1}\|^2- \frac{1}{\eta_{t-1}}\|\Delta_w^t\|^2\right]\\
\leq\;&  \mathbb{E}\left[\left(2\gamma\eta_t-\frac{\gamma\alpha_{t+1}}{\mu^2\eta_t}\right)\|\Delta_v^t\|^2 \right.+\left(\frac{40\kappa^2L^2 \eta_t \gamma}{\lambda^2 \mu^2}+\frac{12L^2\gamma(d+4)\eta_t}{ q \mu^2}-\frac{\eta_t}{2\gamma}\right)\|\Delta_x^t\|^2\nonumber\\
&+\left( \frac{12 L^2\gamma (d+4)\eta_t}{q \mu^2}-\frac{6L^2k_1\mu \eta_t }{k_2}\right)\|\Delta_y^t\|^2 + \left(\frac{96L^2\gamma\eta_t}{\mu^2}-\frac{\gamma\beta_{t+1}}{\mu^2\eta_t}\right)\|\Delta_w^t\|^2 \nonumber\\
&\left.+\frac{192L^4 r^2\gamma\eta_t}{\mu^2}+\frac{45L^2\!d^2r^2\gamma}{ q\eta_t\mu^2}+\frac{2(\alpha_{t+1}^2+\beta_{t+1}^2)\sigma^2\gamma}{q\eta_t\mu^2}\right].\label{eq:drift_bound1}
\end{align}
By $k_2 \leq \frac{5q\mu^e}{12(d+4)}$, we have $\frac{12 L^2\gamma(d+4)\eta_t}{q \mu^2}\leq \frac{5L^2k_1\mu\eta_t}{k_2}$. By $c\geq 192L^2$, we have $\frac{96L^2\gamma\eta_t}{\mu^2}\leq \frac{\gamma\beta_{t+1}}{2\mu^2\eta_t}$. Furthermore, we have $2\gamma\eta_t\leq \frac{\gamma\alpha_{t+1}}{2\mu^2\eta_t}$ due to $c\geq 2$ and $\mu\leq \frac{2}{3}$. Using the upper bound of $k_1$, we can get $\frac{40\kappa^2L^2 \eta_t \gamma}{\lambda^2 \mu^2}\leq \frac{\eta_t}{8\gamma}$ and $\frac{12L^2\gamma(d+4)\eta_t}{ q \mu^2}\leq \frac{\eta_t}{8\gamma}$, which lead to 
$$\frac{40\kappa^2L^2 \eta_t \gamma}{\lambda^2 \mu^2}+\frac{12L^2\gamma(d+4)\eta_t}{ q \mu^2}\leq \frac{\eta_t}{4\gamma}.$$
Then, we can rearrange the inequality \eqref{eq:drift_bound1} as
\begin{align*}
&\mathbb{E}\left[2\gamma\eta_t\|\Delta_v^t\|^2+\frac{\eta_t}{4\gamma}\|\Delta_x^t\|^2+\frac{\gamma\beta_{t+1}}{2\mu^2\eta_t} \|\Delta_w^t\|^2+\frac{L^2\gamma \eta_t}{\mu \lambda}\|\Delta_y^t\|^2\right]\\
\leq\;& \phi_t-\phi_{t+1}+\frac{192L^4 r^2\gamma\eta_t}{\mu^2}+\frac{45L^2d^2 r^2\gamma}{ q\eta_t\mu^2}+\frac{4c^2\eta_t^3\sigma^2\gamma}{q\mu^2}
\end{align*}
% \begin{align*}
% \phi_{t+1}-\phi_t \leq & \mathbb{E}\left[-2\gamma\eta_t\|\Delta_v^t\|^2-\frac{\eta_t}{4\gamma}\|\Delta_x^t\|^2-\frac{\gamma\beta_{t+1}}{2\mu^2\eta_t} \|\Delta_w^t\|^2\right.\\
% & -\frac{
% L^2\gamma \eta_t}{\mu \lambda}\|\Delta_y^t\|^2 \left.+\frac{96L^4d_y r^2\gamma\eta_t}{\mu^2}+\frac{45L^2d r^2\gamma}{ q\eta_t\mu^2}+\frac{4c^2\eta_t^3\sigma^2\gamma}{q\mu^2}\!\right].
% \end{align*}

Since $\frac{\mu^e}{4L^2}\leq k_2\leq \min\left(\frac{1}{6L\mu^{1-e}},\frac{5q\mu^e}{12(d+4)}\right)$, $c\geq 2$, and $\mu\leq \frac{2}{3}$, we have $\frac{4L^2}{\mu\lambda}\geq \frac{1}{\lambda^2}$ and $\frac{2\beta_{t+1}}{\mu^2\eta_t^2}\geq 1$. Then, combined with $\eta_t\geq \eta_T, \forall t\leq T$, the above inequality leads to
\begin{align}\label{eq:sum1}
&\mathbb{E}\big[\|\Delta_v^t\|^2+\frac{1}{\gamma^2}\|\Delta_x^t\|^2+\|\Delta_w^t\|^2+\frac{1}{\lambda^2}\|\Delta_y^t\|^2\big]\nonumber\\
\leq & \mathbb{E}\left[8\|\Delta_v^t\|^2+\frac{1}{\gamma^2}\|\Delta_x^t\|^2+\frac{2\beta_{t+1}}{\mu^2\eta_t^2}\|\Delta_w^t\|^2+\frac{4L^2}{\mu\lambda}\|\Delta_y^t\|^2\right]\nonumber\\
\leq & 4\bigg(\frac{(\phi_t-\phi_{t+1})}{\gamma\eta_T}+\frac{192L^4 r^2\eta_t}{\mu^2\eta_T} +\frac{45L^2d^2r^2}{ q\eta_t\eta_T\mu^2} +\frac{4c^2\eta_t^3\sigma^2}{q\mu^2\eta_T}\bigg).
\end{align}

\paragraph{Step 3: Take the telescoping sum of the above bound.} Using the result in Lemma \ref{lemma:measure_bound} and taking the summation of \eqref{eq:sum1} over $t=1,2,\cdots,T$, we have
\begin{align}\label{eq:sum_2}
& \frac{1}{T}\sum_{t=1}^T\mathbb{E}\left[\left\|\mathfrak{g}(x_t,y_t)\right\|^2\right]\leq 7L^2r^2+4\mu^2D_y^2\nonumber\\
&+ \underbrace{\frac{16}{T}\sum_{t=1}^T\frac{(\phi_t-\phi_{t+1})}{\gamma\eta_T}}_{T_1}+\underbrace{\frac{16}{T}\sum_{t=1}^T\frac{192L^4 r^2\eta_t}{\mu^2\eta_T}}_{T_2} +\underbrace{\frac{16}{T}\sum_{t=1}^T\frac{45L^2d^2 r^2}{ q\eta_t\eta_T\mu^2}}_{T_3} +\underbrace{\frac{16}{T}\sum_{t=1}^T\frac{4c^2 \eta_t^3\sigma^2}{  q\mu^2\eta_T}}_{T_4}.
\end{align}
\begin{lemma}\label{lemma:bound_T}
$T_1+T_2+T_3+T_4\leq \frac{R_1}{T} (m+T)^{\frac{11-3e}{15-3e}}
+ \frac{R_2\ln(m+T)}{T}(m+T)^{\frac{9-e}{15-3e}}+\frac{720d}{q}(m+T)^{-\frac{4}{15-3e}}$.
\end{lemma}
Using the parameter setting for $\mu$ and $r$ in Theorem \ref{thm:Opt}, we can get $7 L^2r^2\leq \frac{7}{d}(m+T)^{-\frac{4}{3}}$ and $4\mu^2D_y^2\leq 4D_y^2 (m+T)^{-\frac{4}{15-3e}}$, which leads to $7L^2r^2+4\mu^2D_y^2\leq (4D_y^2+\frac{7}{d})(m+T)^{-\frac{4}{15-3e}}$.
Substituting this result and the one in Lemma \ref{lemma:bound_T} into \eqref{eq:sum_2}, we can derive the final conclusion.\Halmos
\end{proof}
By the expressions of $R_1, R_2, R_3$ and $q = O(d\mu^{-e})$, it follows that $R_1, R_2, R_3$ are all $O(1)$. Hence, the convergence bound in Theorem \ref{thm:Opt} can be expressed as
\[
\min_{t\leq T} \mathbb{E}\left[\left\|\mathfrak{g}(x_t,y_t)\right\|^2\right]\leq O\big(T^{-\frac{4}{15-3e}}\ln(m+T)\big).
\]
This implies that MGS converges to a stationary point of \eqref{eq:minmax_problem} at the rate $\tilde{O}\big(T^{-\frac{4}{15-3e}}\big)$. To the best of our knowledge, this is the first finite-time convergence guarantee for single-loop algorithms to solve CSO problems \eqref{eq:constrained_problem}. 

To determine the sample complexity needed to attain a given error level, note from the above bound that $\tilde{O}\big(\epsilon^{-\frac{15-3e}{4}}\big)$ iterations are needed to ensure $\min_{t\leq T} \mathbb{E}\left[\left\|\mathfrak{g}(x_t,y_t)\right\|^2\right]\leq \epsilon$ for any $\epsilon>0$. Since each iteration requires $O(q)=O(d\mu^{-e})$ simulations, the total number of simulations is $T\cdot O(q)=T\cdot O(d\epsilon^{-\frac{e}{2}})=\tilde{O}\big(d\epsilon^{-\frac{15-e}{4}}\big)$.
Moreover, by tuning the parameter $e$, we can control the convergence rate. In particular, choosing $e=1$ yields the best convergence rate $\tilde{O}\big(T^{-\frac{1}{3}}\big)$ and the best simulation complexity $\tilde{O}(d\epsilon^{-3.5})$. These findings are summarized in the following corollary.

\begin{corollary}\label{coro:complexity}
Suppose that the conditions in Theorem \ref{thm:Opt} hold and take $e=1$. Then MGS attains a convergence rate of $\tilde{O}(T^{-\frac{1}{3}})$, and $\tilde{O}(d\epsilon^{-3.5})$ simulations are required to obtain an expected $\epsilon$-stationary point.
\end{corollary}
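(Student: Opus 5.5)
The corollary is a direct specialization of Theorem \ref{thm:Opt}, so the plan is to substitute $e=1$ into its parameter choices and bound, and then count simulations. With $e=1$ we get $\mu=(m+T)^{-\frac{2}{15-3}}=(m+T)^{-1/6}$, $\gamma=k_1\mu^{2}$, $\lambda=k_2$, and $q=\lceil\frac{3(d+4)}{5L^2\mu}\rceil=O(d(m+T)^{1/6})$. The exponents in the theorem's bound become $\frac{11-3e}{15-3e}=\frac{2}{3}$, $\frac{9-e}{15-3e}=\frac{2}{3}$ and $\frac{4}{15-3e}=\frac{1}{3}$. Hence
$$\min_{t\le T}\mathbb{E}\big[\|\mathfrak{g}(x_t,y_t)\|^2\big]\le \frac{R_1}{T}(m+T)^{2/3}+\frac{R_2\ln(m+T)}{T}(m+T)^{2/3}+R_3(m+T)^{-1/3}.$$
For $T\ge m$ we have $(m+T)^{2/3}/T\le 2^{2/3}T^{-1/3}$, so the right-hand side is $O\big((R_1+R_2\ln(m+T)+R_3)T^{-1/3}\big)$.

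The main step, and the only real obstacle, is showing that $R_1,R_2,R_3$ are $O(1)$ uniformly in $T$, so that the rate is genuinely $\tilde O(T^{-1/3})$.

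\begin{itemize}
\item \textbf{$R_3$.} We have $R_3=4D_y^2+\frac7d+\frac{720d}{q}$. Since $q\ge \frac{3(d+4)}{5L^2\mu}\ge d$ once $\mu\le \frac{3}{5L^2}$ (true for large $m$), we get $\frac{d}{q}\le 1$, so $R_3=O(1)$.
\item \textbf{$R_2$.} We have $R_2=\frac{3200L^2}{d}+\frac{64c^2\sigma^2}{q}$. By Lemma \ref{lemma:esti_variance}, $\sigma^2=3d\Lambda^2+\frac14(d+2)(d+4)L_h^2r^2+\tilde\sigma^2$. The choice $r\le \frac{1}{L\sqrt d}(m+T)^{-2/3}$ makes the $r^2$ term $O(d)$, so $\sigma^2=O(d)$, and $q=\Omega(d)$ then gives $\frac{\sigma^2}{q}=O(1)$.
\item \textbf{$R_1$.} The term $\frac{2k_1\mu^{1-e}\sigma^2}{\eta_0 q}$ is $O(1)$ by the same estimate, since $\mu^{0}=1$. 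The term $F_r(x_1)-F_r^*$ is bounded because $\mathcal X\times\mathcal Y$ is compact (Assumption \ref{assump:bounded}) and $h$ is $\Lambda$-Lipschitz (Assumption \ref{assump:lipschitz}). The constants $k_1,k_2$ must be chosen independent of $T$. The constraints $\frac{\mu}{4L^2}\le k_2\le\min\big(\frac{1}{6L},\frac{5q\mu}{12(d+4)}\big)$ are compatible because $\frac{5q\mu}{12(d+4)}\ge\frac14\ge\frac{\mu}{4L^2}$ for small $\mu$. The upper bounds on $k_1$, such as $\frac{1}{4\sqrt{10}L^2\mu}$ and $\frac{1}{2L_F\eta_0\mu^2}$, only grow as $\mu$ shrinks, so a fixed $k_1$ works. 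This bounds the remaining factor $\frac{16}{k_1}$ and the $D_y^2$ term.
\end{itemize}

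For sample complexity, setting the bound $\le\epsilon$ requires $T=\tilde O(\epsilon^{-3})$ iterations. Each iteration uses $O(q)$ simulations: a fixed number (four) of function evaluations per sample across $\hat\nabla_x,\hat\nabla_y$ at the two points $(x_{t+1},y_{t+1})$ and $(x_t,y_t)$. Since $q=O(d\mu^{-1})=O(dT^{1/6})=\tilde O(d\epsilon^{-1/2})$, the total is $T\cdot O(q)=\tilde O(d\epsilon^{-3.5})$. This matches the general formula $\tilde O(d\epsilon^{-\frac{15-e}{4}})$ at $e=1$, completing the argument.
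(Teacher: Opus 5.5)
Your proposal is correct and follows the paper's own argument. You specialize Theorem \ref{thm:Opt} to $e=1$ (exponents $\frac{2}{3},\frac{2}{3},\frac{1}{3}$) and note that $R_1,R_2,R_3=O(1)$ because $\sigma^2=O(d)$ and $q=O(d\mu^{-1})$. You then multiply $T=\tilde O(\epsilon^{-3})$ iterations by $q=\tilde O(d\epsilon^{-1/2})$ simulations per iteration. Your check that $k_1,k_2$ can be fixed independently of $T$ fills in a step the paper leaves implicit.

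One harmless slip: $q\ge\frac{3(d+4)}{5L^2\mu}$ gives $\frac{5q\mu}{12(d+4)}\ge\frac{1}{4L^2}$, not $\frac{1}{4}$. This still dominates the lower bound $\frac{\mu}{4L^2}$ because $\mu\le 1$, so your conclusion stands.
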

Corollary \ref{coro:complexity} indicates that increasing the number of gradient replications can yield more accurate gradient estimates, which can improve both iteration and sample complexities. However, as demonstrated in our numerical experiments, a relatively small value of $q$ is often sufficient to achieve substantial performance gains.
% To see the derivation of Corollary \ref{coro:complexity}, it is obvious from Theorem \ref{thm:Opt} that $O(\epsilon^{-3})$ iterations are required to achieve $\min_{t\leq T} \mathbb{E}\left[\left\|\mathfrak{g}(x_t,y_t)\right\|^2\right]\leq \epsilon$ for any $\epsilon>0$. Due to that $O(q)=O(d\mu^{-1})$ simulations are required for each iteration, the total number of simulations is $T\cdot O(q)=T\cdot O(d\epsilon^{-\frac{1}{2}})=O\left(\frac{d}{\epsilon^{3.5}}\right)$.
% $\sigma^2=O(d)$. Accordingly, we have $T_1=O(T^{-\frac{1}{3}}\cdot \frac{d}{q})$, and $T_4=O(T^{-\frac{1}{3}}\cdot \frac{d}{q})$. When $q=d$, to get $O(T^{-\frac{1}{3}})\leq \epsilon$ for some $\epsilon>0$, we need $T\geq O(1/\epsilon^3)$, and $\mu\leq \epsilon^{1/2}$. 

\subsection{Interpretation and Implications}\label{subsec:interpretation}
Here, we discuss our results in terms of comparison with prior work, effect of CRN, and implications for problem \eqref{eq:constrained_problem}.

\textbf{Comparison with prior work.} The convergence rate of $\tilde{O}(T^{-\frac{1}{3}})$ in Corollary \ref{coro:complexity} for solving \eqref{eq:constrained_problem} seems worse than $O(T^{-\frac{1}{2}})$ achieved by SA for solving unconstrained SO in the form of $\min_{x\in\mathcal{X}}h_0(x)$ \citep{fu2015handbook}. The degradation arises from the dual update used to handle constraints on performance measures, which leads to the coupling between primal and dual dynamics. MGS must balance the descent of the objective with the ascent of the penalty term, and this coupling may slow down convergence. Besides, unlike other random search-based algorithms whose convergence rates are usually dimension-dependent, such as $O(T^{-\frac{1}{d+2}})$ in \cite{wang2025gaussian} for solving $\min_{x\in\mathcal{X}}h_0(x)$, the convergence rate of our algorithm does not heavily depend on the problem dimension, making it well-suited for high-dimensional problems.

% \begin{remark}
% The results in Theorem \ref{thm:Opt} and Corollary \ref{coro:complexity} provide the convergence guarantees to a stationary point of problem \eqref{eq:minmax_problem}. As for problem \eqref{eq:constrained_problem}, one can show that $x$ is a critical KKT point (see its definition in \cite{boob2023stochastic}) of \eqref{eq:constrained_problem} when $(x,y)$ is a stationary point satisfying $\mathfrak{g}(x,y)=0$ and $y< \overline{y}$. Specifically, $\mathfrak{g}_x(x,y)=0$ implies the stationarity condition; $\mathfrak{g}_y(x,y)=0$ and $y < \overline{y}$ implies complementary slackness and primal feasibility.
% \end{remark}

Besides our method, \cite{nguyen2023stochastic} proposed a nested-loop algorithm for nonconvex problems with stochastic functional constraints, which offers the sample complexity bound of $O\big(d\epsilon^{-3}\big)$. However, their bound is only stated for the weighted average of all iterates, i.e., $\Bar{x}=(\sum_t \gamma_t)^{-1}\sum_t \gamma_t x_t$ instead of directly for $\{x_t\}_{t\geq 0}$ as in our results. Moreover, the nested-loop structure calls an inner-loop algorithm to approximately solve a convex relaxation of \eqref{eq:constrained_problem} at each outer iteration. The number of outer iterations depends on the solution error of the inner loop. This makes the practical implementation and parameter tuning of nested-loop algorithms more difficult. In contrast, single-loop algorithms update all variables within a unified iterative procedure, without additional inner subproblems or solvers.

Compared with the literature on zeroth-order min-max optimization, our work also achieves the best-known sample complexity bound in stochastic, nonconvex-concave settings. In particular, the previous best-known bound of $O(d\epsilon^{-4})$ was established in \cite{NEURIPS2024_413885e7} using an extra-gradient framework, while our method and analysis improve this bound to $\tilde{O}(d\epsilon^{-3.5})$.

% In contrast, \cite{nguyen2023stochastic} propose a nested-loop algorithm for nonconvex problems with stochastic functional constraints, which offers the sample complexity bound of $O\big(\frac{d}{\epsilon^3}\big)$. However, their bound is only stated for the weighted average of all iterates, i.e., $\Bar{x}=(\sum_t \gamma_t)^{-1}\sum_t \gamma_t x_t$ instead of directly for $\{x_t\}_{t\geq 0}$ as in our results. Moreover, the nested-loop structure calls an inner-loop algorithm to approximately solve a convex relaxation of \eqref{eq:constrained_problem} at each outer iteration. The number of outer iterations depends on the solution error of the inner loop. This makes the practical implementation and parameter tuning of nested-loop algorithms more difficult.

\textbf{Effect of CRN.} Here, we investigate the effect of CRN on the convergence bound of MGS. We denote the bound in Theorem \ref{thm:Opt} as a function of $T$ and $\sigma^2$, i.e.,
$$\mathcal{B}(T;\sigma^2)=\frac{R_1(\sigma^2)}{T} (m+T)^{\frac{11-3e}{15-3e}}
+ \frac{R_2(\sigma^2)\ln(m+T)}{T}(m+T)^{\frac{9-e}{15-3e}}+R_3(m+T)^{-\frac{4}{15-3e}},$$
where $R_1(\sigma^2)=\frac{16}{k_1}\big(F_r(x_1)+\frac{8L^2k_1\mu}{k_2} D_y^2+ \frac{2k_1\mu^{1-e}\sigma^2}{\eta_{0}q}
-F_r^*\big)$ and $R_2(\sigma^2)=\frac{64c^2\sigma^2}{q}+\frac{3200L^2}{d}$. Therefore, $\mathcal{B}(T;\sigma^2)$ can be written as a linear function of $\sigma^2$:
$$\mathcal{B}(T;\sigma^2)=\mathcal{B}(T;0)+\frac{B(T)}{q}\sigma^2,$$
where $B(T)=\frac{32\mu^{1-e}}{\eta_0 T}(m+T)^{\frac{11-3e}{15-3e}}+\frac{64c^2\ln(m+T)}{T}(m+T)^{\frac{9-e}{15-3e}}$.
Then, we can denote the convergence bounds for implementing CRN and independent randomness as $\mathcal{B}(T;\sigma_{\mathrm{CRN}}^2)$ and $\mathcal{B}(T;\sigma_{\mathrm{IND}}^2)$, respectively. Therefore, we have the following corollary.

\begin{corollary}
The bounds of MGS under CRN and independent randomness for gradient estimations satisfy
$$\frac{\mathcal{B}(T;\sigma_{\mathrm{IND}}^2)-\mathcal{B}(T;0)}{\mathcal{B}(T;\sigma_{\mathrm{CRN}}^2)-\mathcal{B}(T;0)}=\frac{\sigma_{\mathrm{IND}}^2}{\sigma_{\mathrm{CRN}}^2}.$$
\end{corollary}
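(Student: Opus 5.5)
The identity follows directly from the affine dependence of the bound on $\sigma^2$ shown just above. We have
$$\mathcal{B}(T;\sigma^2)=\mathcal{B}(T;0)+\frac{B(T)}{q}\sigma^2 .$$
The first step is to confirm this decomposition term by term. The constant $R_3=4D_y^2+\frac{7}{d}+\frac{720d}{q}$ contains no $\sigma^2$. In $R_1(\sigma^2)$, the only $\sigma^2$-dependent piece is $\frac{16}{k_1}\cdot\frac{2k_1\mu^{1-e}\sigma^2}{\eta_0 q}=\frac{32\mu^{1-e}\sigma^2}{\eta_0 q}$. In $R_2(\sigma^2)$, it is $\frac{64c^2\sigma^2}{q}$.

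We must also check that nothing else in the bound secretly depends on $\sigma^2$. The quantities $m,c,k_1,k_2,\mu,r,q$ are all chosen in Theorem \ref{thm:Opt} through $L,d,T,e$ only, so they do not vary between the CRN and IND implementations. Likewise, $F_r(x_1)$ and $F_r^*$ depend only on the problem and on $r$, not on how the noise is coupled. This also justifies that Theorem \ref{thm:Opt} applies to the IND implementation with $\sigma^2$ replaced by $\sigma_{\mathrm{IND}}^2$.

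That transfer is the only conceptual point. The proof of Theorem \ref{thm:Opt} uses the noise only through two facts:
\begin{itemize}
\item unbiasedness, $\mathbb{E}[\hat\nabla_x f]=\nabla_x f_1$ and $\mathbb{E}[\hat\nabla_y f]=\nabla_y f_2$;
\item a second-moment bound, entering Lemma \ref{lemma:v_error_bound} via the $\frac{2\alpha_{t+1}^2\sigma^2}{q\eta_t}$ terms and the initialization term in $\phi_1$.
\end{itemize}
For $g_{\mathrm{IND}}$, the mean is still $\nabla_x h_1$, since $\mathbb{E}_{\xi'}[B']=\mathbb{E}_\xi[B]$. Proposition \ref{prop:var_bounds} supplies the variance bound $\sigma_{\mathrm{IND}}^2$. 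Hence the same derivation yields the bound $\mathcal{B}(T;\sigma_{\mathrm{IND}}^2)$.

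There is one caveat. Lemma \ref{lemma:v_error_bound} also bounds increments $\hat\nabla f(x_{t+1},y_{t+1};\mathcal{S})-\hat\nabla f(x_t,y_t;\mathcal{S})$ using smoothness under shared randomness across the two iterates. We treat that part as unchanged, taking the statement as comparing bounds in which only the variance constant differs, as the text preceding the corollary indicates.

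Given the decomposition, subtracting $\mathcal{B}(T;0)$ gives
$$\mathcal{B}(T;\sigma_{\mathrm{IND}}^2)-\mathcal{B}(T;0)=\frac{B(T)}{q}\sigma_{\mathrm{IND}}^2,\qquad \mathcal{B}(T;\sigma_{\mathrm{CRN}}^2)-\mathcal{B}(T;0)=\frac{B(T)}{q}\sigma_{\mathrm{CRN}}^2 .$$
Division is legitimate: $B(T)>0$ because $\mu,\eta_0,c,T>0$, and $q\geq 1$, and $\sigma_{\mathrm{CRN}}^2>0$ by Lemma \ref{lemma:esti_variance}. The common factor $B(T)/q$ cancels, leaving $\sigma_{\mathrm{IND}}^2/\sigma_{\mathrm{CRN}}^2$. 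By Corollary \ref{coro:crn_VR}, this ratio is at least one for small $r$.
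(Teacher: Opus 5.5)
Your argument is correct and is the paper's: the bound is affine in the variance constant, $\mathcal{B}(T;\sigma^2)=\mathcal{B}(T;0)+\frac{B(T)}{q}\sigma^2$, so subtracting $\mathcal{B}(T;0)$ and cancelling the positive factor $B(T)/q$ gives the ratio. The paper simply defines both bounds by evaluating this formula at $\sigma_{\mathrm{CRN}}^2$ and $\sigma_{\mathrm{IND}}^2$, so the identity does not need Theorem \ref{thm:Opt} to transfer to independent randomness; your claim that the noise enters only through unbiasedness and the variance bound is also inaccurate, since, as your own caveat suggests, the $a_2$ increment bound in Lemma \ref{lemma:v_error_bound} uses CRN within each estimator (the baseline and perturbed evaluations share $\xi$).
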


Besides, since the variance-related part of the bound enters only through $\sigma^2/q$, implementing CRN has the same effect as increasing the number of gradient replications from $q$ to $q\sigma_{\mathrm{IND}}^2/\sigma_{\mathrm{CRN}}^2$. Thus, CRN yields a constant-factor reduction in the replication budget required to attain a target stationarity level, without changing the asymptotic rate exponent.

\textbf{Implication for problem \eqref{eq:constrained_problem}.} While $\|\mathfrak g(x,y)\|$ is a standard stationarity measure for the min-max formulation \eqref{eq:minmax_problem}, our ultimate goal is solving the original CSO problem \eqref{eq:constrained_problem}.
We therefore introduce a KKT-style residual and connect it to $\mathfrak g(x,y)$.
For the original problem \eqref{eq:constrained_problem}, define the KKT gap at a primal-dual pair $(x,y)\in\mathcal X\times\mathcal Y$ as
\[
\mathrm{KKT}(x,y):=\big\|\frac{1}{\gamma}\left(x-\mathcal{P}_{\mathcal X}[x-\gamma \nabla_x h(x,y)]\right)\big\|
+\max_{j\in\mathcal I}[h_j(x)]_+
+\max_{j\in\mathcal I} y(j) |h_j(x)|.
\]
The first term matches the primal component of $\mathfrak g(x,y)$; the latter two terms quantify primal infeasibility and complementarity gap.

The following lemma demonstrates that controlling $\|\mathfrak g(x,y)\|$ while also preventing dual saturation leads to a small approximate KKT gap.

\begin{lemma}[Bound for KKT Gap]\label{lemma:kkt_bridge}
Assume Assumptions \ref{assump:bounded}--\ref{assump:bounded_variance} hold. Let $(x,y)\in\mathcal X\times\mathcal Y$ be the iterate derived by MGS satisfying $\|\mathfrak g(x,y)\|\le \epsilon$ and, additionally,
$y<\bar y$. Then there exists a constant $C>0$ such that
\[
\mathrm{KKT}(x,y)
\;\le\; C\,\epsilon.
\]
\end{lemma}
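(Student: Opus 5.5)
The plan is to bound the three terms of $\mathrm{KKT}(x,y)$ separately, reducing each to a component of $\mathfrak g(x,y)$.

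\textbf{First term.} It is exactly $\|\mathfrak g_x(x,y)\|\le\|\mathfrak g(x,y)\|\le\epsilon$.

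\textbf{Dual component.} The other two terms come from $\mathfrak g_y$. Since $h$ is the Lagrangian, $\nabla_y h(x,y)=(h_j(x))_{j\in\mathcal I}$, and $\mathcal Y=[0,\bar y]^{d_y}$ is a box. Hence the projection acts coordinatewise,
\[
\mathfrak g_y(x,y)(j)=\tfrac1\lambda\bigl(y(j)-\min\{\bar y(j),\max\{0,\,y(j)+\lambda h_j(x)\}\}\bigr),
\]
and $|\mathfrak g_y(x,y)(j)|\le\epsilon$ for every $j$. Set $u_j:=y(j)+\lambda h_j(x)$ and fix $j$. The hypothesis $y<\bar y$ is what excludes the upper clipping case; I use it in two ways.

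\textbf{Case A: $u_j>\bar y(j)$.} Then $\lambda\epsilon\ge\bar y(j)-y(j)$. I would read ``$y<\bar y$'' quantitatively as $\bar y(j)-y(j)\ge\delta$ for a margin $\delta>0$. This forces $\epsilon\ge\delta/\lambda$, so for $\epsilon<\delta/\lambda$ the case cannot occur. For $\epsilon\ge\delta/\lambda$ the bound is trivial, because $\mathrm{KKT}(x,y)\le M$ for some constant $M$ (by compactness of $\mathcal X\times\mathcal Y$ and continuity of the $h_j$), so taking $C\ge M\lambda/\delta$ covers it.

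\textbf{Case B: $0\le u_j\le\bar y(j)$.} Then $\mathfrak g_y(j)=-h_j(x)$, so $|h_j(x)|\le\epsilon$. This gives $[h_j(x)]_+\le\epsilon$ and $y(j)|h_j(x)|\le\bar y(j)\,\epsilon$.

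\textbf{Case C: $u_j<0$.} Then $\mathfrak g_y(j)=y(j)/\lambda$, so $y(j)\le\lambda\epsilon$. Also $h_j(x)<-y(j)/\lambda\le 0$, so $[h_j(x)]_+=0$. Finally $y(j)|h_j(x)|\le\lambda\epsilon\,H$, where $H:=\max_j\sup_{x\in\mathcal X}|h_j(x)|<\infty$. This is finite because each $h_j$ is Lipschitz (Assumption~\ref{assump:lipschitz}) and $\mathcal X$ is compact (Assumption~\ref{assump:bounded}).

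\textbf{Combining.} Taking the maximum over $j$ and adding the three terms gives
\[
\mathrm{KKT}(x,y)\le\bigl(1+1+\max(D_y,\lambda H)\bigr)\epsilon
\]
whenever Case A is excluded. Enlarging $C$ to absorb Case A yields the claim.

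\textbf{Main obstacle.} The delicate step is Case A. If the strict inequality $y<\bar y$ carries no uniform margin, Case A can actually occur, and then $h_j(x)$ may be large while $\mathfrak g_y$ stays small. I will therefore state explicitly that $C$ depends on a margin $\delta$, or, equivalently, interpret the hypothesis as ``no coordinate of $y+\lambda\nabla_y h$ is clipped at $\bar y$,'' in which case Case A is vacuous and $C=2+\max(D_y,\lambda H)$ works outright.
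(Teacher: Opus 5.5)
Your proposal is correct and follows essentially the same route as the paper: a coordinatewise three-case analysis of the box projection, with the upper-clipping case controlled by the margin $\delta=\min_{j}(\bar y(j)-y(j))$. In the paper this margin is likewise determined by the given point, so its constant $C=1+(1+\max_j\bar y(j))\max\{1,\Lambda\lambda/\delta\}$ depends on $\delta$ exactly as yours does. The only differences are cosmetic. The paper bounds $|h_j(x)|\le\Lambda$ directly from Assumption~\ref{assump:lipschitz} rather than via compactness. It also absorbs the clipped case coordinatewise through $|(\mathfrak g_y(x,y))_j|\ge\delta/\lambda$, instead of splitting on whether $\epsilon\ge\delta/\lambda$.
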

Lemma \ref{lemma:kkt_bridge} shows that, under proper conditions, the KKT gap can converge at the same rate as $\|\mathfrak{g}(x,y)\|$. Specifically, any point $(x,y)\in\mathcal{X}\times \mathcal{Y}$ satisfying $\|\mathfrak{g}(x,y)\|=0$ and $0\leq y <\bar y$ is a KKT point of the original CSO problem.
\begin{proof}[Proof of Lemma \ref{lemma:kkt_bridge}]
Define
$
\delta:=\min_{j\in\mathcal I}\bigl(\bar{y}(j)-y(j)\bigr)$, and $y_{\max}:=\max_{j\in\mathcal I}\bar{y}(j).$
Since \(y<\bar{y}\) component-wise, we have \(\delta>0\). By the definition of the projected gradient,
$\left\|\frac{1}{\gamma}\left(x-\mathcal{P}_{\mathcal X}[x-\gamma \nabla_x h(x,y)]\right)\right\|
= \|\mathfrak{g}_x(x,y)\|
\le \|\mathfrak{g}(x,y)\|
\le \epsilon$.
Thus, it remains to bound the primal infeasibility and complementarity terms by
\(\|\mathfrak{g}_y(x,y)\|\).

For each \(j\in\mathcal I\), define $\hat{y}(j):=y(j)+\lambda h_j(x)$, and $
\mathcal{P}_j[\cdot]:=\mathcal{P}_{[0,\bar{y}(j)]}[\cdot]$.
Since \(\mathcal{Y}=[0,\bar{y}]\), the projection is performed component-wise, and hence $(\mathfrak{g}_y(x,y))_j=\frac{1}{\lambda}\bigl(y(j)-\mathcal{P}_j[\hat{y}(j)]\bigr)$.

Then, we consider three cases.

\textit{Case 1:} \(0\le \hat{y}(j)\le \bar{y}(j)\). In this case, $\mathcal{P}_j[\hat{y}(j)]=\hat{y}(j)$, so $(\mathfrak{g}_y(x,y))_j
=
\frac{1}{\lambda}\bigl(y(j)-\hat{y}(j)\bigr)
=
-h_j(x).$
Therefore, we have
\begin{align*}
[h_j(x)]_+ \le |h_j(x)| &= |(\mathfrak{g}_y(x,y))_j|,\\
y(j)|h_j(x)|
\le \bar{y}(j)\,|(\mathfrak{g}_y(x,y))_j|
&\le y_{\max}|(\mathfrak{g}_y(x,y))_j|.
\end{align*}

\textit{Case 2:} \(\hat{y}(j)<0\). In this case, \(\mathcal{P}_j[\hat{y}(j)]=0\), so $|(\mathfrak{g}_y(x,y))_j|=\frac{1}{\lambda}y(j)$.
Moreover, \(\hat{y}(j)=y(j)+\lambda h_j(x)<0\) implies \(h_j(x)<0\), and hence $[h_j(x)]_+ = 0.$
By Assumption \ref{assump:lipschitz}, $\|\nabla_y h(x,y)\|\le \Lambda$ since $h(x,y)$ is differentiable and Lipschitz continuous. Therefore, $|h_j(x)|\leq \Lambda$ and
\[
y(j)|h_j(x)|
\le \lambda\Lambda\,|(\mathfrak{g}_y(x,y))_j|.
\]

\textit{Case 3:} \(\hat{y}(j)>\bar{y}(j)\). In this case, we have \(\mathcal{P}_j[\hat{y}(j)]=\bar{y}(j)\), and thus $|(\mathfrak{g}_y(x,y))_j|=\frac{\bar{y}(j)-y(j)}{\lambda}\ge \frac{\delta}{\lambda}$.
Since \(\hat{y}(j)=y(j)+\lambda h_j(x)>\bar{y}(j)\), we have $h_j(x)>\frac{\bar{y}(j)-y(j)}{\lambda}\ge\frac{\delta}{\lambda}>0$.
Therefore, \([h_j(x)]_+=h_j(x)\). Using Assumption \ref{assump:lipschitz} again, we can get
$$[h_j(x)]_+
\le \Lambda
\le \frac{\Lambda\lambda}{\delta}|(\mathfrak{g}_y(x,y))_j|,\quad y(j)|h_j(x)|
\le y_{\max}\Lambda
\le \frac{y_{\max}\Lambda\lambda}{\delta}|(\mathfrak{g}_y(x,y))_j|.$$

Now set $M:=\max\left\{1,\frac{\Lambda\lambda}{\delta}\right\}$. Since \(0\le y(j)\le \bar{y}(j)\le y_{\max}\) for all \(j\in\mathcal I\), we have
\(\delta\le y_{\max}\), and therefore \(\lambda\Lambda\le y_{\max}M\). Combining the
three cases yields, for every \(j\in\mathcal I\),
\[
[h_j(x)]_+ \le M\,|(\mathfrak{g}_y(x,y))_j|,
\qquad
y(j)|h_j(x)| \le y_{\max}M\,|(\mathfrak{g}_y(x,y))_j|.
\]
Taking the maximum over \(j\) and using \(\|v\|_\infty\le \|v\|\), we obtain
\[
\max_{j\in\mathcal I}[h_j(x)]_+
\le M\|\mathfrak{g}_y(x,y)\|,
\qquad
\max_{j\in\mathcal I} y(j)|h_j(x)|
\le y_{\max}M\|\mathfrak{g}_y(x,y)\|.
\]

Therefore,
$$\mathrm{KKT}(x,y)\le \|\mathfrak{g}_x(x,y)\|
+ M\|\mathfrak{g}_y(x,y)\|
+ y_{\max}M\|\mathfrak{g}_y(x,y)\|\le \Bigl(1+(1+y_{\max})M\Bigr)\epsilon.$$
% \begin{align*}
% \mathrm{KKT}(x,y)
% &\le \|\mathfrak{g}_x(x,y)\|
% + M\|\mathfrak{g}_y(x,y)\|
% + y_{\max}M\|\mathfrak{g}_y(x,y)\| \\
% &\le \Bigl(1+(1+y_{\max})M\Bigr)\|\mathfrak{g}(x,y)\| \\
% &\le \Bigl(1+(1+y_{\max})M\Bigr)\epsilon.
% \end{align*}
Hence the claim holds with $C:=1+(1+y_{\max})\max\left\{1,\frac{\Lambda\lambda}{\delta}\right\}$. \Halmos
\end{proof}

\section{Numerical Experiments}\label{sec:numerical}
In this section, we conduct numerical experiments on a serial queuing system and a high-dimensional stochastic optimization problem to validate the effectiveness of our algorithm and theoretical results. The detailed experiment settings can be found in the Appendix.

\subsection{Serial Queuing System}
We consider a serial queuing system composed of five subqueues arranged in sequence, each operated by a single server. Our objective is to minimize the overall service cost by appropriately selecting the service rates of all servers, while ensuring a specified level of service quality. Specifically, the expected waiting time should remain below a given threshold. The CSO problem is formulated as
\begin{align*}
\min_{x\in\mathcal{X}}\; & h_0(x)=c^T x\\
\text{s.t.}\; & h_1(x)=\mathbb{E}[W(x;\xi)]- \overline{w}\leq 0,
\end{align*}
where $x\in\mathcal{X}\subset\mathbb{R}^5$ is the vector of service rates. $h_0: \mathcal{X}\to \mathbb{R}$ is the cost function and $c\in\mathbb{R}^5$ is the vector of cost coefficients. $W(\cdot;\xi): \mathcal{X} \to \mathbb{R}$ is the total waiting time of the queuing system under the sample path $\xi$, and $\overline{w}$ is the specified maximum waiting time.

First, we test MGS under different numbers of replications, i.e., $q$, for each gradient estimation. We run each algorithm with different initial solutions and randomness 20 times, and the results are shown in Figure \ref{fig:mgs_performance_b}. The dark lines and shaded areas represent the average performance and standard deviation, respectively. It shows that increasing $q$ substantially improves the stability of the optimization trajectories. Compared with $q=1$,  the cases with $q=10, 20$, and $50$ exhibit much faster objective reduction, quicker adjustment of the dual variable, and significantly smaller oscillations in both the constraint value and the stationarity measure. At the same time, the difference among $q=10,20,$ and $50$ is relatively small. This observation is consistent with the variance-reduction effect of batched gradient estimation: larger $q$ leads to more accurate gradient estimates, but in practice, a small $q$ is often sufficient to obtain substantial gains. The stationarity measure does not converge to zero exactly because it is evaluated by simulation and contains estimation error.
\begin{figure}[htpb]
    \centering
    \includegraphics[width=0.8\linewidth]{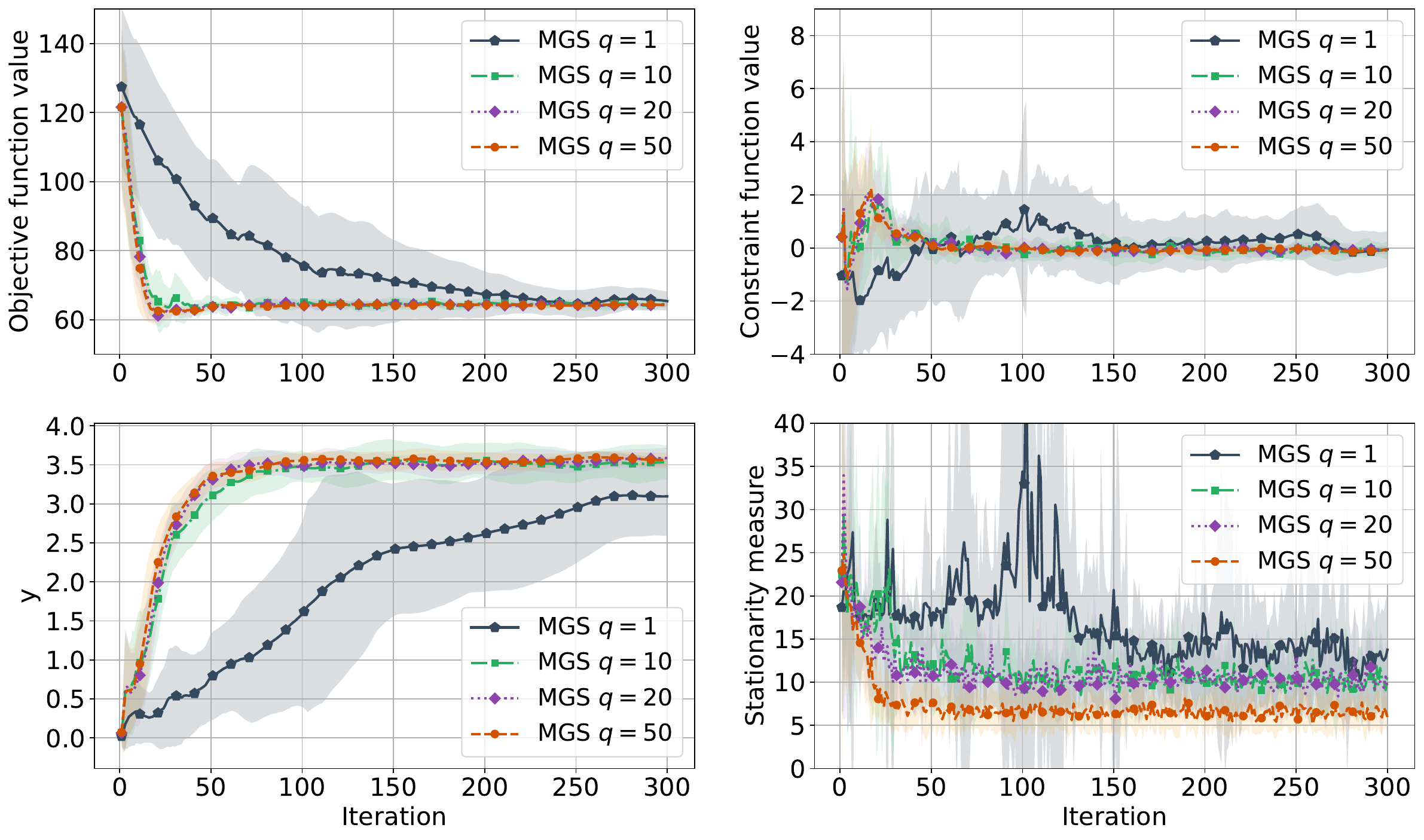}
    \caption{Performance of MGS with different $q$ in the queuing system}
    \label{fig:mgs_performance_b}
\end{figure}

\begin{figure}[htpb]
    \centering
    \includegraphics[width=0.85\linewidth]{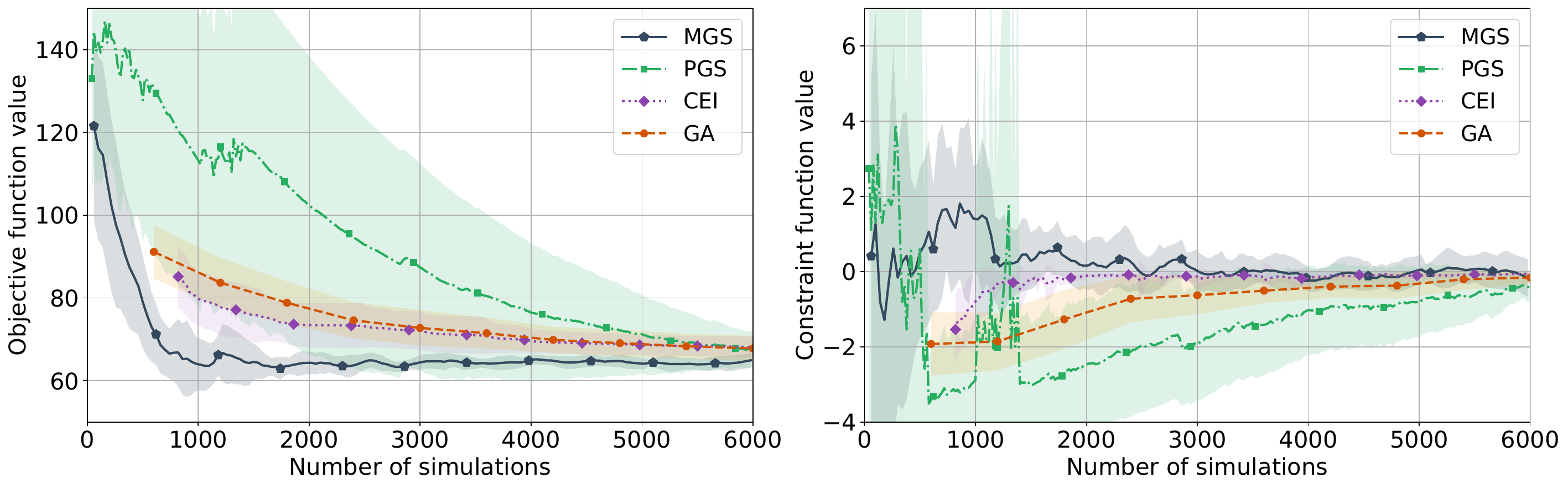}
    \caption{Performance of different algorithms in the queuing system}
    \label{fig:queue_compare}
\end{figure}

We also compare MGS with three other SO algorithms that can solve problem \eqref{eq:constrained_problem}. The first is the penalty-based gradient search (PGS) algorithm \citep{wang2008stochastic}, which incorporates constraint violations into the objective function to be minimized. The second is a BO algorithm using constrained expected improvement (CEI) \citep{gardner2014bayesian}, a method commonly employed for CSO problems. The third is the genetic algorithm (GA) \citep{rajwar2023exhaustive}, a meta-heuristic optimization method. Their performance as a function of the number of simulations is presented in Figure \ref{fig:queue_compare}.
 The results indicate that all methods eventually converge to feasible solutions; however, MGS is significantly more efficient than the others. It converges faster and yields a better final solution, characterized by a lower objective value and approximately zero constraint violation. In contrast, the other methods approach approximately zero constraint violation from the negative side, meaning their constraints are overly satisfied, which leads to overly conservative solutions.
% The figure shows that MGS can converge to a feasible solution with varying choices of $q$. Note that the stationarity measures of MGS do not converge to 0, which is largely due to the estimation error (the measure can only be calculated by sampling). Furthermore, the performance of MGS $(q=10)$ is very close to that of MGS $(q=50)$, which shows that only a few replications for each gradient estimation can significantly improve the performance.
% Compared with MGS, CEI can find solutions with higher objective function values and higher computational complexities, although the constraint violation is satisfactory. For PGS, the final cost is slightly higher than that of others, as the constraint is oversatisfied. GA can find high-quality solutions
% with lower variance, although the computational complexity is higher.

\subsection{Ablation Study}
To make the methodological novelty explicit and to diagnose which design elements matter in CSO, we conduct an ablation study that removes one component at a time from MGS, including CRN, momentum updates, and strong concavity regularization, while keeping the other components. Specifically, we test three more variants from MGS: MGS with no CRN (MGS (NCRN)), MGS with no momentum updates (MGS (NMU)), and MGS without strong concavity (MGS $\mu=0$), and compare them with standard MGS.

\begin{figure}[htb]
    \centering
    \includegraphics[width=0.8\linewidth]{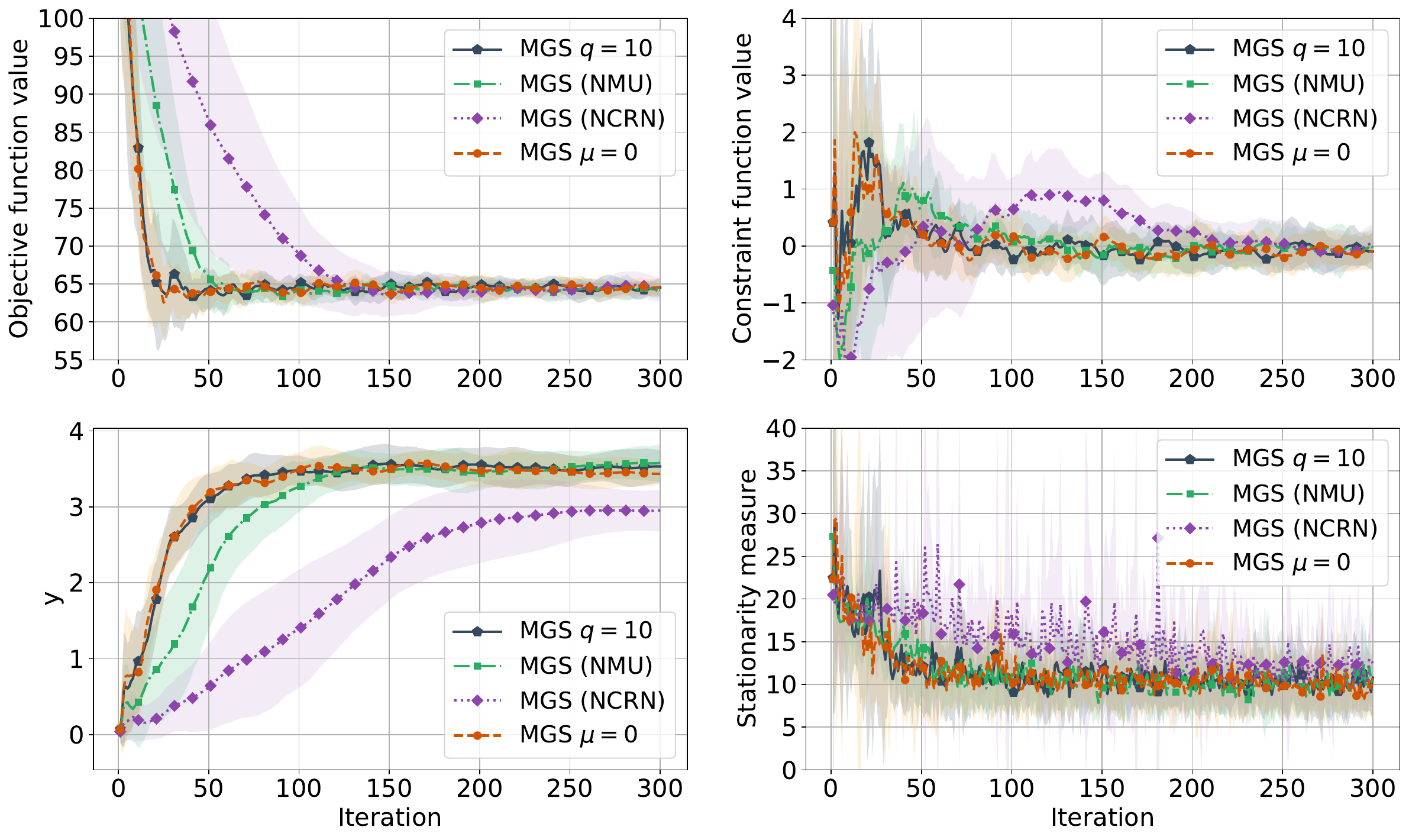}
    \caption{Performance of MGS under different settings in the queuing system}
    \label{fig:ablation_compare}
\end{figure}

The performance comparison results are summarized in Figure \ref{fig:ablation_compare}. It shows that removing CRN implementation can substantially diminish the convergence speed of MGS, which is consistent with our analysis in Sections \ref{subsec:crn_analysis} and \ref{subsec:interpretation}: implementing CRN can reduce the variance of the gradient estimations. Although CRN can only improve our finite-time bound by a variance constant, it plays an important role in improving the empirical performance. Similar degradation is also observed for MGS with no momentum updates, but it is smaller. In contrast, setting $\mu=0$ for MGS yields nearly identical performance to the standard one. We emphasize that the strong-concavity regularization term mainly serves as a stability mechanism to prevent cycling behavior of GDA in general min-max problems (see Section \ref{subsec:mgs_algo}), and it is also essential for establishing our non-asymptotic results.

\subsection{High-Dimensional Case}
We consider a problem with a 2000-dimensional decision space to verify the capability of MGS to deal with high-dimensional problems. Specifically, we consider the CSO problem:
\begin{equation}
\begin{aligned}
\min_{x\in\mathcal{X}\subseteq \mathbb{R}^{2000}} & h_0(x)=\mathbb{E}[h_0(x;a)]=\mathbb{E}[\langle a, x^3\rangle] -5x^Tx\\
s.t.\; & h_1(x)=\mathbb{E}[h_1(x;b)]=\mathbb{E}[\langle b,x^2\rangle]-200 \leq 0,
\end{aligned}
\end{equation}
where $a,b\in\mathbb{R}^{2000}$ are random variables following Gaussian distributions.

Due to the limited scalability of CEI and GA in high-dimensional settings, we only evaluate MGS and PGS on this problem. The results are presented in Figure \ref{fig:function_compare}. These results indicate that MGS converges to feasible solutions for both $q=5$ and $q=10$, although the finite-sample performance for $q=10$ is slightly inferior to that for $q=5$. This implies that using a larger $q$ may reduce efficiency in terms of sample complexity, even though it can improve the quality of gradient estimation. By comparison, PGS converges much more slowly and exhibits a small but positive constraint violation. Overall, the results show that MGS scales well to high-dimensional CSO problems and achieves superior performance.
\begin{figure}[htpb]
    \centering
    \includegraphics[width=0.85\linewidth]{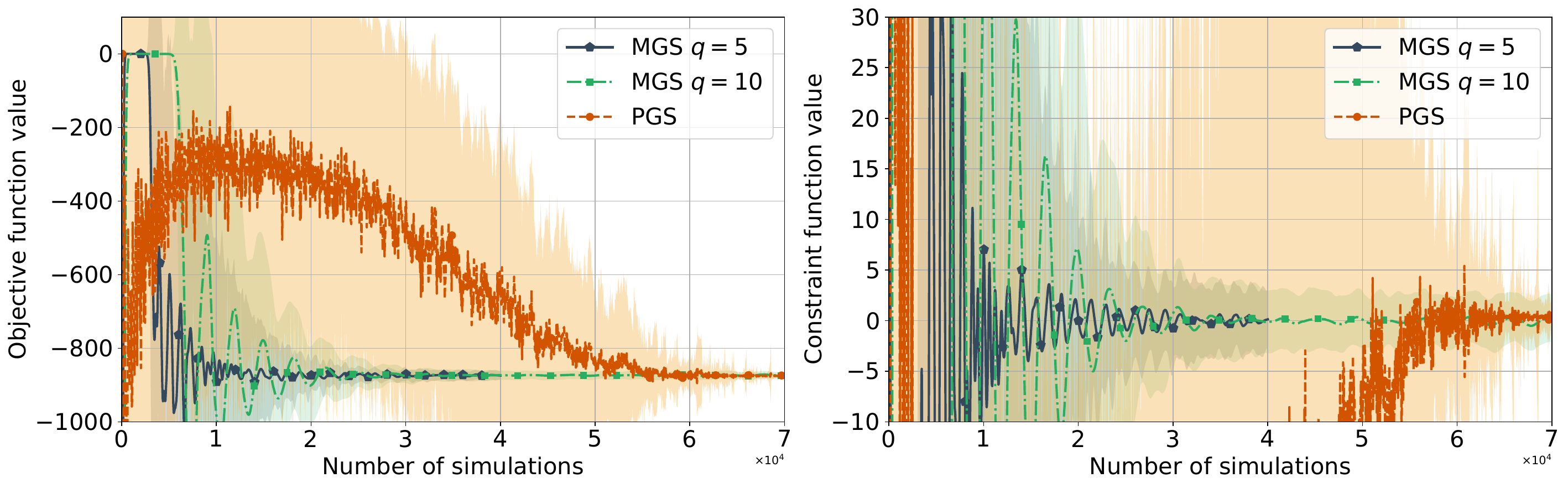}
    \caption{Performance of different algorithms in the high-dimensional case}
    \label{fig:function_compare}
\end{figure}

\section{Conclusion}\label{sec:conclusion}
This paper studies a continuous simulation optimization problem with stochastic and non-analytical constraints. We reformulate it as a min-max problem and develop a min-max gradient search (MGS) algorithm that integrates Gaussian-smoothing finite-difference gradient estimators within a primal-dual scheme to solve it. The algorithm leverages common random numbers and momentum-based updates to accelerate convergence. On the theoretical side, we establish the first non-asymptotic convergence guarantee for single-loop CSO algorithms, showing that MGS can converge to a stationary point at a rate of $\tilde{O}(T^{-1/3})$. Empirically, MGS is able to obtain feasible, high-quality solutions on both a queuing system and a 2000-dimensional problem, and outperforms existing methods in terms of both convergence rate and scalability.

Beyond the theoretical guarantees, this work also highlights the potential of min-max gradient search as a practically implementable framework for CSO. In simulation environments, both the objective and the constraints are available only through noisy black-box outputs, so the key challenge is not just to optimize performance, but to systematically balance performance improvement and feasibility under limited simulation budgets. From this perspective, the min-max formulation is particularly appealing because it incorporates constraint control directly into the search dynamics, rather than relying on carefully tuned penalty parameters or repeatedly solving auxiliary inner problems. This makes the proposed approach not only theoretically grounded, but also well aligned with the structural needs of CSO. We hope this viewpoint can help broaden the algorithmic toolbox for simulation optimization and motivate further research on scalable methods for more realistic constrained systems.

%\THEEndNotes
% \begingroup \parindent 0pt \parskip 0.0ex \def\enotesize{\normalsize} \theendnotes \endgroup

% Acknowledgments here
% \ACKNOWLEDGMENT{We would like to express our sincere gratitude to [acknowledge individuals, organizations, or institutions] for their invaluable contributions to this research. We are also grateful to [mention any additional acknowledgements, such as technical assistance, data providers, or colleagues] for their support and assistance throughout the course of this work.}

% References here (outcomment the appropriate case)

% CASE 1: BiBTeX used to constantly update the references
%   (while the paper is being written).

\bibliographystyle{apalike}
\bibliography{reference}

\begin{thebibliography}{}

\bibitem[Amaran et~al., 2016]{amaran2016simulation}
Amaran, S., Sahinidis, N.~V., Sharda, B., and Bury, S.~J. (2016).
\newblock Simulation optimization: a review of algorithms and applications.
\newblock {\em Annals of Operations Research}, 240(1):351--380.

\bibitem[An et~al., 2024]{NEURIPS2024_413885e7}
An, W., Liu, Y., Shang, F., and Liu, H. (2024).
\newblock Robust and faster zeroth-order minimax optimization: Complexity and applications.
\newblock In {\em Proceedings of the 38th International Conference on Neural Information Processing Systems}, volume~37, pages 37050--37069.

\bibitem[Bassamboo and Randhawa, 2010]{bassamboo2010accuracy}
Bassamboo, A. and Randhawa, R.~S. (2010).
\newblock On the accuracy of fluid models for capacity sizing in queueing systems with impatient customers.
\newblock {\em Operations Research}, 58(5):1398--1413.

\bibitem[Berahas et~al., 2022]{berahas2022theoretical}
Berahas, A.~S., Cao, L., Choromanski, K., and Scheinberg, K. (2022).
\newblock A theoretical and empirical comparison of gradient approximations in derivative-free optimization.
\newblock {\em Foundations of Computational Mathematics}, 22(2):507--560.

\bibitem[Bhatnagar et~al., 2011]{bhatnagar2011stochastic}
Bhatnagar, S., Hemachandra, N., and Mishra, V.~K. (2011).
\newblock Stochastic approximation algorithms for constrained optimization via simulation.
\newblock {\em ACM Transactions on Modeling and Computer Simulation (TOMACS)}, 21(3):1--22.

\bibitem[Cakmak et~al., 2021]{cakmak2021solving}
Cakmak, S., Wu, D., and Zhou, E. (2021).
\newblock Solving bayesian risk optimization via nested stochastic gradient estimation.
\newblock {\em IISE Transactions}, 53(10):1081--1093.

\bibitem[Chang et~al., 2025]{chang2025maximizing}
Chang, P.-C., Yeh, C.-T., Yu, V.~F., and Chiu, S.-F. (2025).
\newblock Maximizing network reliability through simulation optimization: a component configuration strategy.
\newblock {\em Annals of Operations Research}, pages 1--36.

\bibitem[Cordoba-Pacheco and Ruiz, 2024]{cordoba2024optimal}
Cordoba-Pacheco, A. and Ruiz, F. (2024).
\newblock Optimal energy management in multi-microgrids. a scenario-based mpc approach.
\newblock In {\em 2024 European Control Conference (ECC)}, pages 3709--3714. IEEE.

\bibitem[Cutkosky and Orabona, 2019]{cutkosky2019momentum}
Cutkosky, A. and Orabona, F. (2019).
\newblock Momentum-based variance reduction in non-convex sgd.
\newblock In {\em Proceedings of the 33rd International Conference on Neural Information Processing Systems}, volume~32, pages 15236--15245.

\bibitem[Eriksson and Poloczek, 2021]{pmlr-v130-eriksson21a}
Eriksson, D. and Poloczek, M. (2021).
\newblock Scalable constrained bayesian optimization.
\newblock In {\em Proceedings of The 24th International Conference on Artificial Intelligence and Statistics}, volume 130, pages 730--738. PMLR.

\bibitem[Fan and Hu, 2018]{fan2018surrogate}
Fan, Q. and Hu, J. (2018).
\newblock Surrogate-based promising area search for lipschitz continuous simulation optimization.
\newblock {\em INFORMS Journal on Computing}, 30(4):677--693.

\bibitem[Fu et~al., 2015]{fu2015handbook}
Fu, M.~C. et~al. (2015).
\newblock {\em Handbook of simulation optimization}, volume 216.
\newblock Springer.

\bibitem[Gardner et~al., 2014]{gardner2014bayesian}
Gardner, J.~R., Kusner, M.~J., Xu, Z.~E., Weinberger, K.~Q., and Cunningham, J.~P. (2014).
\newblock Bayesian optimization with inequality constraints.
\newblock In {\em Proceedings of the 31st International Conference on International Conference on Machine Learning}, volume~32, pages 937--945.

\bibitem[Hern{\'a}ndez-Lobato et~al., 2016]{hernandez2016general}
Hern{\'a}ndez-Lobato, J.~M., Gelbart, M.~A., Adams, R.~P., Hoffman, M.~W., and Ghahramani, Z. (2016).
\newblock A general framework for constrained bayesian optimization using information-based search.
\newblock {\em Journal of Machine Learning Research}, 17(160):1--53.

\bibitem[Hu and Fu, 2025]{hu2025convergence}
Hu, J. and Fu, M.~C. (2025).
\newblock On the convergence rate of stochastic approximation for gradient-based stochastic optimization.
\newblock {\em Operations Research}, 73(2):1143--1150.

\bibitem[Hu et~al., 2022]{hu2022stochastic}
Hu, J., Peng, Y., Zhang, G., and Zhang, Q. (2022).
\newblock A stochastic approximation method for simulation-based quantile optimization.
\newblock {\em INFORMS Journal on Computing}, 34(6):2889--2907.

\bibitem[Huang et~al., 2022]{huang2022accelerated}
Huang, F., Gao, S., Pei, J., and Huang, H. (2022).
\newblock Accelerated zeroth-order and first-order momentum methods from mini to minimax optimization.
\newblock {\em Journal of Machine Learning Research}, 23(36):1--70.

\bibitem[Huang et~al., 2023]{huang2023adagda}
Huang, F., Wu, X., and Hu, Z. (2023).
\newblock Adagda: Faster adaptive gradient descent ascent methods for minimax optimization.
\newblock In {\em Proceedings of the 26th International Conference on Artificial Intelligence and Statistics}, volume 206, pages 2365--2389. PMLR.

\bibitem[Kiefer and Wolfowitz, 1952]{kiefer1952stochastic}
Kiefer, J. and Wolfowitz, J. (1952).
\newblock Stochastic estimation of the maximum of a regression function.
\newblock {\em The Annals of Mathematical Statistics}, 23(3):462--466.

\bibitem[Kleijnen et~al., 2010]{kleijnen2010constrained}
Kleijnen, J.~P., Van~Beers, W., and Van~Nieuwenhuyse, I. (2010).
\newblock Constrained optimization in expensive simulation: Novel approach.
\newblock {\em European Journal of Operational Research}, 202(1):164--174.

\bibitem[Kleinman et~al., 1999]{kleinman1999simulation}
Kleinman, N.~L., Spall, J.~C., and Naiman, D.~Q. (1999).
\newblock Simulation-based optimization with stochastic approximation using common random numbers.
\newblock {\em Management Science}, 45(11):1570--1578.

\bibitem[Lam and Zhang, 2025]{lam2025distributionally}
Lam, H. and Zhang, J. (2025).
\newblock Distributionally constrained black-box stochastic gradient estimation and optimization.
\newblock {\em Operations Research}, 73(5):2680--2694.

\bibitem[Li et~al., 2022]{li2022zeroth}
Li, Z., Chen, P.-Y., Liu, S., Lu, S., and Xu, Y. (2022).
\newblock Zeroth-order optimization for composite problems with functional constraints.
\newblock In {\em Proceedings of the AAAI Conference on Artificial Intelligence}, volume~36, pages 7453--7461.

\bibitem[Lin et~al., 2020]{lin2020gradient}
Lin, T., Jin, C., and Jordan, M. (2020).
\newblock On gradient descent ascent for nonconvex-concave minimax problems.
\newblock In {\em Proceedings of the 37th International Conference on Machine Learning}, volume 119, pages 6083--6093. PMLR.

\bibitem[Liu et~al., 2020]{liu2020min}
Liu, S., Lu, S., Chen, X., Feng, Y., Xu, K., Al-Dujaili, A., Hong, M., and O'Reilly, U.-M. (2020).
\newblock Min-max optimization without gradients: Convergence and applications to black-box evasion and poisoning attacks.
\newblock In {\em Proceedings of the 37th International Conference on Machine Learning}, volume 119, pages 6282--6293. PMLR.

\bibitem[Nedi{\'c} and Ozdaglar, 2009]{nedic2009subgradient}
Nedi{\'c}, A. and Ozdaglar, A. (2009).
\newblock Subgradient methods for saddle-point problems.
\newblock {\em Journal of Optimization Theory and Applications}, 142:205--228.

\bibitem[Nesterov and Spokoiny, 2017]{nesterov2017random}
Nesterov, Y. and Spokoiny, V. (2017).
\newblock Random gradient-free minimization of convex functions.
\newblock {\em Foundations of Computational Mathematics}, 17(2):527--566.

\bibitem[Nguyen and Balasubramanian, 2023]{nguyen2023stochastic}
Nguyen, A. and Balasubramanian, K. (2023).
\newblock Stochastic zeroth-order functional constrained optimization: Oracle complexity and applications.
\newblock {\em INFORMS Journal on Optimization}, 5(3):256--272.

\bibitem[Peng et~al., 2016]{peng2016gradient}
Peng, Y., Fu, M.~C., and Hu, J.-Q. (2016).
\newblock Gradient-based simulated maximum likelihood estimation for stochastic volatility models using characteristic functions.
\newblock {\em Quantitative Finance}, 16(9):1393--1411.

\bibitem[Ragueneau et~al., 2024]{ragueneau2024constrained}
Ragueneau, Q., Laurent, L., Legay, A., Larroque, T., and Crambuer, R. (2024).
\newblock A constrained bayesian optimization framework for structural vibrations with local nonlinearities.
\newblock {\em Structural and Multidisciplinary Optimization}, 67(4):47.

\bibitem[Rajwar et~al., 2023]{rajwar2023exhaustive}
Rajwar, K., Deep, K., and Das, S. (2023).
\newblock An exhaustive review of the metaheuristic algorithms for search and optimization: taxonomy, applications, and open challenges.
\newblock {\em Artificial Intelligence Review}, 56(11):13187--13257.

\bibitem[Robbins and Monro, 1951]{robbins1951stochastic}
Robbins, H. and Monro, S. (1951).
\newblock A stochastic approximation method.
\newblock {\em The Annals of Mathematical Statistics}, 22(3):400--407.

\bibitem[Scheinberg, 2022]{scheinberg2022finite}
Scheinberg, K. (2022).
\newblock Finite difference gradient approximation: To randomize or not?
\newblock {\em INFORMS Journal on Computing}, 34(5):2384--2388.

\bibitem[Srinivas et~al., 2010]{srinivas2010gaussian}
Srinivas, N., Krause, A., Kakade, S.~M., and Seeger, M. (2010).
\newblock Gaussian process optimization in the bandit setting: No regret and experimental design.
\newblock In {\em Proceedings of the 27th International Conference on Machine Learning}, pages 1015--1022.

\bibitem[Tsai and Fu, 2014]{tsai2014genetic}
Tsai, S.~C. and Fu, S.~Y. (2014).
\newblock Genetic-algorithm-based simulation optimization considering a single stochastic constraint.
\newblock {\em European Journal of Operational Research}, 236(1):113--125.

\bibitem[Udhayakumar et~al., 2011]{udhayakumar2011stochastic}
Udhayakumar, A., Charles, V., and Kumar, M. (2011).
\newblock Stochastic simulation based genetic algorithm for chance constrained data envelopment analysis problems.
\newblock {\em Omega}, 39(4):387--397.

\bibitem[Ungredda and Branke, 2024]{ungredda2024bayesian}
Ungredda, J. and Branke, J. (2024).
\newblock Bayesian optimisation for constrained problems.
\newblock {\em ACM Transactions on Modeling and Computer Simulation}, 34(2):1--26.

\bibitem[Wang and Spall, 2008]{wang2008stochastic}
Wang, I.-J. and Spall, J.~C. (2008).
\newblock Stochastic optimisation with inequality constraints using simultaneous perturbations and penalty functions.
\newblock {\em International Journal of Control}, 81(8):1232--1238.

\bibitem[Wang et~al., 2025]{wang2025gaussian}
Wang, X., Hong, L.~J., Jiang, Z., and Shen, H. (2025).
\newblock Gaussian process-based random search for continuous optimization via simulation.
\newblock {\em Operations Research}, 73(1):385--407.

\bibitem[Wang et~al., 2023]{wang2023recent}
Wang, X., Jin, Y., Schmitt, S., and Olhofer, M. (2023).
\newblock Recent advances in bayesian optimization.
\newblock {\em ACM Computing Surveys}, 55(13s):1--36.

\bibitem[Xu and Zheng, 2023]{xu2023gradient}
Xu, J. and Zheng, Z. (2023).
\newblock Gradient-based simulation optimization algorithms via multi-resolution system approximations.
\newblock {\em INFORMS Journal on Computing}, 35(3):633--651.

\bibitem[Xu et~al., 2023]{xu2023unified}
Xu, Z., Zhang, H., Xu, Y., and Lan, G. (2023).
\newblock A unified single-loop alternating gradient projection algorithm for nonconvex--concave and convex--nonconcave minimax problems.
\newblock {\em Mathematical Programming}, 201(1):635--706.

\bibitem[Zhou and Bhatnagar, 2017]{zhou2017gradient}
Zhou, E. and Bhatnagar, S. (2017).
\newblock Gradient-based adaptive stochastic search for simulation optimization over continuous space.
\newblock {\em INFORMS Journal on Computing}, 30(1):154--167.

\end{thebibliography}

\newpage

\begin{center}
    {\LARGE\bfseries Appendices}
\end{center}

\appendix
\section{Proofs of Lemmas for Theorem \ref{thm:Opt}} \label{app:proofs_thm_opt}
\subsection{Proof of Lemma \ref{lemma:measure_bound}}
Denote $G_x(x_t,y_t;g)=\frac{1}{\gamma}(x_t-\mathcal{P}_\mathcal{X}[x_t-\gamma g])$ for some $g\in \mathbb{R}^{d_x}$. Then, we can decompose $\mathfrak{g}_x(x_t,y_t)$ by
\begin{align*}
\mathfrak{g}_x(x_t,y_t)=&G_x(x_t,y_t;\nabla_x h(x_t,y_t))-G_x(x_t,y_t;\nabla_x h_1 (x_t,y_t))\\
&+G_x(x_t,y_t;\nabla_x h_1 (x_t,y_t))-G_x(x_t,y_t;v_t)+G_x(x_t,y_t;v_t).
\end{align*}
By the non-expansiveness of projection, we bound $\|\mathfrak{g}_x(x_t,y_t)\|^2$ by
\begin{align*}
\|\mathfrak{g}_x(x_t,y_t)\|^2 & =\left\|\frac{1}{\gamma}\left(x_t-\mathcal{P}_\mathcal{X}[x_t-\gamma\nabla_xh(x_t,y_t)]\right)\right\|^2\\
% &\leq \!\left\|\frac{1}{\gamma}\left(x_t-\mathcal{P}_\mathcal{X}[x_t-\gamma v_t]\right)+v_t-\nabla_x f(x_t,y_t)\right\|^2\\
&\leq 3\|\nabla_x f\!(x_t,y_t)\!-\!\nabla_x f_1\!(x_t,y_t)\|^2\!+\!3\!\left\| \Delta_v^t \right\|^2 \!\!+\! \frac{3}{\gamma^2}\!\left\| \Delta_x^t \right\|^2\\
&\leq \frac{3}{\gamma^2}\left\| \Delta_x^t \right\|^2+3\left\| \Delta_v^t \right\|^2+3L^2r^2.
\end{align*}
where the second step follows from Cauchy-Schwarz inequality and the last step follows from Lemma \ref{lemma:esti_bias}. Similarly,
denote $G_y(x_t,y_t;g)=\frac{1}{\lambda}(y_t-\mathcal{P}_\mathcal{Y}[y_t+\lambda g])$ for some $g\in \mathbb{R}^{d_y}$ and decompose $\mathfrak{g}_y(x_t,y_t)$ by
\begin{align*}
\mathfrak{g}_y(x_t,y_t)=&G_y(x_t,y_t;\nabla_y h(x_t,y_t))-G_y(x_t,y_t;\nabla_y h_2(x_t,y_t))+G_y(x_t,y_t;\nabla_y h_2(x_t,y_t))\\
& -G_y(x_t,y_t;\nabla_y f_2(x_t,y_t))+G_y(x_t,y_t;\nabla_y f_2(x_t,y_t))-G_y(x_t,y_t;w_t)+G_y(x_t,y_t;w_t).
\end{align*}
Using Lemma \ref{lemma:esti_bias}, the non-expansiveness of projection, and Cauchy-Schwarz inequality, we can bound $\|\mathfrak{g}_y(x_t,y_t)\|^2$ by
$$\|\mathfrak{g}_y(x_t,y_t)\|^2=\big\|\frac{1}{\lambda}\left(y_t-\mathcal{P}_\mathcal{Y}[y_t+\lambda\nabla_y h(x_t,y_t)]\right)\big\|^2\leq \frac{4}{\lambda^2}\left\| \Delta_y^t \right\|^2+4\left\| \Delta_w^t \right\|^2+4L^2r^2+4\mu^2D_y^2.$$
% \begin{align*}
% &\|\mathfrak{g}_y(x_t,y_t)\|^2=\big\|\frac{1}{\lambda}\left(y_t-\mathcal{P}_\mathcal{Y}[y_t+\lambda\nabla_y h(x_t,y_t)]\right)\big\|^2\\
% &\leq \left\|\frac{1}{\lambda}\left(y_t\!-\!\mathcal{P}_\mathcal{Y}[y_t+\lambda w_t]\right)\!+\!\nabla_y f(x_t,y_t)\!+\!\mu y_t\!-\!w_t\right\|^2\\
% &\leq \frac{4}{\lambda^2}\left\| \Delta_y^t \right\|^2+4\left\| \Delta_w^t \right\|^2+4L^2r^2d_y+4\mu^2D_y^2.
% \end{align*}
% where $D_y=\max_{y_1,y_2\in\mathcal{Y}} \|y_1-y_2\|$.
Combining the above two inequalities finishes the proof.

\subsection{Proof of Lemma \ref{lemma:F_diff_bound}}
Denote $F(x)=\max_{y\in\mathcal{Y}}f(x,y)$. According to Lemma 4.3 of \cite{lin2020gradient}, $F(x)$ and $F_r(x)$ both have $L_F$-Lipschitz gradients, and $y_r^*(x)$ is $\frac{L}{\mu}$-Lipschitz continuous.
Using the smoothness of $F_r$, we have
\begin{align}\label{eq:proof_F_diff_1}
\Delta_F^t \leq\; & \langle \nabla F_r(x_t),x_{t+1}-x_t\rangle+\frac{L_F}{2}\|x_{t+1}-x_t\|^2\nonumber\\
=\; & \eta_t \langle \nabla F_r(x_t)-v_t,\Delta_x^t\rangle +\eta_t \langle  v_t,\Delta_x^t\rangle+\frac{L_F\eta_t^2}{2}\|\Delta_x^t\|^2,
\end{align}
where we used the relationship $x_{t+1}-x_t=\eta_t\Delta_x^t$ in the second step. Based on the projection step: $\Tilde{x}_{t+1}=\mathcal{P}_\mathcal{X}[x_t-\gamma v_t]$, we have $\langle\Tilde{x}_{t+1}-x_t+\gamma v_t, x-\Tilde{x}_{t+1}\rangle \geq 0$
% \begin{align*}
% \langle\Tilde{x}_{t+1}-x_t+\gamma v_t, x-\Tilde{x}_{t+1}\rangle \geq 0,
% \end{align*}
for any $x\in\mathcal{X}$. Let $x=x_t$, we can get
\begin{align}\label{eq:proof_F_diff_2}
\langle\Tilde{x}_{t+1}-x_t, v_t\rangle \leq -\frac{1}{\gamma}\| \Tilde{x}_{t+1}-x_t \|^2.
\end{align}
Then, we decompose $\langle \nabla F_r(x_t)-v_t,\Delta_x^t\rangle$ into two terms:
\begin{align*}
\langle \nabla F_r(x_t)-v_t,\Delta_x^t\rangle
=& \langle \nabla F_r(x_t) -\nabla_x f_{1}(x_t,y_t),\Delta_x^t\rangle+ \langle \nabla_x f_{1}(x_t,y_t)-v_t,\Delta_x^t\rangle=G_1+G_2,
\end{align*}
where we denote $G_1=\langle \nabla F_r(x_t) -\nabla_x f_{1}(x_t,y_t),\Delta_x^t\rangle$ and $G_2=\langle \nabla_x f_{1}(x_t,y_t)-v_t,\Delta_x^t\rangle$.
It is not hard to verify that $f_1(x,y;\xi)$ also has $ L$-Lipschitz gradients. Then, we bound $G_1$ as follows.
\begin{align*}
G_1 & \leq \|\nabla F_r(x_t)-\nabla_x f_{1}(x_t,y_t)\|\|\Delta_x^t\|\\
% &\leq 2\gamma\|\nabla F_r(x_t)-\nabla_x f_{1}(x_t,y_t)\|^2+\frac{1}{8\gamma}\|\Delta_x^t\|^2\\
& =2\gamma\|\nabla_x f_1(x_t,y_r^*(x_t))\!-\!\nabla_x f_{1}(x_t,y_t)\|^2\!+\!\frac{\|\Delta_x^t\|^2}{8\gamma}\\
& \leq 2L^2\gamma\|y_r^*(x_t)-y_t\|^2+ \frac{1}{8\gamma}\|\Delta_x^t\|^2,
\end{align*}
where we used the Cauchy-Schwarz inequality and $\nabla F_r(x_t)=\nabla_x f_1(x_t,y_r^*(x_t))$ in the second step. As for $G_2$, we have $G_2\leq 2\gamma\|\Delta_v^t\|^2+\frac{1}{8\gamma}\|\Delta_x^t\|^2$.
% \begin{align*}
% G_2\leq 2\gamma\|\Delta_v^t\|^2+\frac{1}{8\gamma}\|\Delta_x^t\|^2.
% \end{align*}
Combining the bounds for $G_1$ and $G_2$ leads to
\begin{align}\label{eq:proof_F_diff_3}
\langle \nabla F_r(x_t)-v_t,\Delta_x^t\rangle
\leq  2L^2\gamma\|y_r^*(x_t)-y_t\|^2+2\gamma\|\Delta_v^t\|^2+\frac{1}{4\gamma}\|\Delta_x^t\|^2.
\end{align}
Substituting \eqref{eq:proof_F_diff_2} and \eqref{eq:proof_F_diff_3} into \eqref{eq:proof_F_diff_1}, we can get
\begin{align*}
\Delta_F^t \leq & 2L^2\gamma \eta_t\|y_r^*(x_t)-y_t\|^2+ 2\gamma\eta_t\|\Delta_v^t\|^2 +\left(\frac{\eta_t}{4\gamma}+\frac{L_F\eta_t^2}{2}-\frac{\eta_t}{\gamma}\right)\|\Delta_x^t\|^2\\
\leq &  2L^2\gamma \eta_t\|y_r^*(x_t)\!-\!y_t\|^2\!+\!2\gamma\eta_t\|\Delta_v^t\|^2\!-\!\frac{\eta_t}{2\gamma}\|\Delta_x^t\|^2,
\end{align*}
where we used the condition $k_1\leq \frac{1}{2L_F\eta_0\mu^{3-e}}$ (which leads to $\gamma\leq \frac{1}{2L_F\eta_0}\leq \frac{1}{2L_F\eta_t}$) in the last step.

\subsection{Proof of Lemma \ref{lemma:y_diff_bound}}
We have $f_1(x,y)$ is $\mu$-strongly concave in $y$ because it is the expectation of the $\mu$-strongly concave function $f(x+rz,\cdot)$. Therefore, for any $y\in\mathcal{Y}$, 
\begin{align}\label{eq:proof_y_diff_1}
f_1(x_t,y)-f_1(x_t,y_t)
\leq\; \langle \nabla_y f_1(x_t,y_t), y-y_t\rangle-\frac{\mu}{2}\|y-y_t\|^2.
% \nonumber\\
% =\; &\langle w_t, y-\Tilde{y}_{t+1}\rangle +\langle \nabla_y f_1(x_t,y_t)-w_t, y-\Tilde{y}_{t+1}\rangle + \langle \nabla_y f_1(x_t,y_t), \Tilde{y}_{t+1}-y_t\rangle -\frac{\mu}{2}\|y-y_t\|^2.
\end{align}
Using the smoothness of $f_1(x,y)$, we have
\begin{align}\label{eq:proof_y_diff_2}
f_1(x_t,\Tilde{y}_{t+1})-f_1(x_t,y_t)\geq \langle \nabla_y f_1(x_t,y_t), \Delta_y^t\rangle-\frac{L}{2}\|\Delta_y^t\|^2\!\!.
\end{align}
% \begin{align}\label{eq:proof_y_diff_2}
% &f(x_t,\Tilde{y}_{t+1})-f(x_t,y_t)\\
% \geq &\langle \nabla_y f(x_t,y_t), \Tilde{y}_{t+1}-y_t\rangle-\frac{L}{2}\|\Tilde{y}_{t+1}-y_t\|^2.
% \end{align}
Denote $\Tilde{\Delta}_w^t=\nabla_y f_1(x_t,y_t)-w_t$. Combining \eqref{eq:proof_y_diff_1} by setting $y=y_r^*(x_t)$ with \eqref{eq:proof_y_diff_2} leads to
\begin{align}\label{eq:proof_y_diff_3}
 0\leq & f_1(x_t,y_r^*(x_t))-f_1(x_t,\Tilde{y}_{t+1})\nonumber\\
\leq\;& \underbrace{\langle 
w_t, y_r^*(x_t)-\Tilde{y}_{t+1} \rangle}_{C_1}+\underbrace{\langle 
\Tilde{\Delta}_w^t, y_r^*(x_t)-\Tilde{y}_{t+1} \rangle}_{C_2} + \frac{L}{2}\| \Tilde{y}_{t+1}-y_t \|^2 -\frac{\mu}{2}\| y_r^*(x_t)-y_t \|^2.
\end{align}
Then, we try to bound $C_1$ and $C_2$, respectively. Using the property of projection for the dual step, we have $\left\langle -w_t+\frac{1}{\lambda}(\Tilde{y}_{t+1}-y_t), y-\Tilde{y}_{t+1} \right\rangle \geq 0$, which leads to $$C_1 \leq \big\langle \frac{1}{\lambda}(\Tilde{y}_{t+1}-y_t), y_r^*(x_t)-\Tilde{y}_{t+1} \big\rangle 
= -\frac{1}{\lambda} \|\Delta_y^t\|^2 +\frac{1}{\lambda}\langle \Delta_y^t, y_r^*(x_t)-y_t\rangle.$$
% \begin{align*}
% \left\langle -w_t+\frac{1}{\lambda}(\Tilde{y}_{t+1}-y_t), y-\Tilde{y}_{t+1} \right\rangle \geq 0.
% \end{align*}
% Denote $\|x\|_A=\sqrt{x^TAx}$ for some vector $x\in\mathbb{R}^{d_x}$ and $A\in\mathbb{R}^{d_x\times d_x}$. 
% Then, the above inequality \eqref{eq:proof_F_diff_3} leads to
% \begin{align*}
% C_1& \leq \left\langle \frac{1}{\lambda}(\Tilde{y}_{t+1}-y_t), y_r^*(x_t)-\Tilde{y}_{t+1} \right\rangle \\
% &= -\frac{1}{\lambda} \|\Delta_y^t\|^2 +\frac{1}{\lambda}\langle \Delta_y^t, y_r^*(x_t)-y_t\rangle.
% \end{align*}
We also have the equation: 
$$\|y_{t+1}-y_r^*(x_t)\|^2 =\|y_{t}+\eta_t(\Tilde{y}_{t+1}-y_t)-y_r^*(x_t)\|^2 =\|y_{t}-y_r^*(x_t)\|^2+2\eta_t\langle \Delta_y^t, y_t-y_r^*(x_t) \rangle+\eta_t^2\|\Delta_y^t\|^2.$$
% \begin{align*}
% &\|y_{t+1}-y_r^*(x_t)\|^2 =\|y_{t}+\eta_t(\Tilde{y}_{t+1}-y_t)-y_r^*(x_t)\|^2\\
% & =\|y_{t}-y_r^*(x_t)\|^2+2\eta_t\langle \Delta_y^t, y_t-y_r^*(x_t) \rangle+\eta_t^2\|\Delta_y^t\|^2.
% \end{align*}
Using the above equality and the fact $\eta_t\leq 1$, we further have the bound for $C_1$:
$$C_1\leq -\frac{1}{2\lambda} \|\Delta_y^t\|^2+\frac{1}{2\lambda \eta_t} \|y_{t}-y_r^*(x_t)\|^2-\frac{1}{2\lambda \eta_t} \|y_{t+1}-y_r^*(x_t)\|^2.$$
% \begin{align*}
% C_1\leq &-\frac{1}{2\lambda} \|\Delta_y^t\|^2+\frac{1}{2\lambda \eta_t} \|y_{t}-y_r^*(x_t)\|^2\\
% &-\frac{1}{2\lambda \eta_t} \|y_{t+1}-y_r^*(x_t)\|^2.
% \end{align*}
Next, we bound $C_2$. By Young’s inequality, 
$$C_2 = \langle \Tilde{\Delta}_w^t, y_r^*(x_t)- y_t \rangle+\langle 
\Tilde{\Delta}_w^t, y_t-\Tilde{y}_{t+1} \rangle \leq \frac{\mu}{4}\|y_r^*(x_t)- y_t\|^2+\frac{2}{\mu}\|\Tilde{\Delta}_w^t\|^2+\frac{\mu}{4}\|\Delta_y^t\|^2.$$
% \begin{align*}
% C_2&= \langle \Tilde{\Delta}_w^t, y_r^*(x_t)- y_t \rangle+\langle 
% \Tilde{\Delta}_w^t, y_t-\Tilde{y}_{t+1} \rangle\\
% & \leq \frac{\mu}{4}\|y_r^*(x_t)- y_t\|^2+\frac{2}{\mu}\|\Tilde{\Delta}_w^t\|^2+\frac{\mu}{4}\|\Delta_y^t\|^2.
% \end{align*}
Substituting the bounds for $C_1$ and $C_2$ into \eqref{eq:proof_y_diff_3}, we can get
\begin{equation}
\begin{aligned}\label{eq:bound_yy}
\|y_{t+1}-y_r^*(x_t)\|^2 \leq\; & \left(1-\frac{\lambda\mu\eta_t}{2}\right)\|y_r^*(x_t)- y_t\|^2+\frac{4\lambda\eta_t}{\mu} \|\Tilde{\Delta}_w^t\|^2+\lambda\eta_t\left(\frac{\mu}{2}+L-\frac{1}{\lambda}\right)\|\Delta_y^t\|^2\\
 \leq &\left( 1-\frac{\lambda\mu\eta_t}{2} \right) \|\Delta_*^t\|^2-\frac{3\eta_t}{4}\|\Delta_y^t\|^2 +\frac{4\lambda\eta_t}{\mu}\|\Tilde{\Delta}_w^t\|^2, 
\end{aligned}
\end{equation}
where the last step follows from the relations $k_2\leq \frac{1}{6L\mu^{1-e}}$ (which leads to $\lambda \leq \frac{1}{6L}$), and $\mu\leq \frac{2L}{3}$. 
% we can rearrange the above inequality as
% \begin{align*}
% \|y_{t+1}-y_r^*(x_t)\|^2 \leq &\left( 1-\frac{\lambda\mu\eta_t}{2} \right) \|y_r^*(x_t)- y_t\|^2\\
% &-\frac{3\eta_t}{4}\|\Delta_y^t\|^2 +\frac{4\lambda\eta_t}{\mu}\|\Tilde{\Delta}_w^t\|^2.
% \end{align*}
Using the inequality: 
$$2\langle y_{t+1}-y_r^*(x_t), y_r^*(x_t)-y_r^*(x_{t+1}) \rangle\leq \frac{\lambda\mu\eta_t}{4}\|y_{t+1}-y_r^*(x_{t})\|^2+\frac{4}{\lambda\mu\eta_t}\|y_r^*(x_{t})-y_r^*(x_{t+1})\|^2,$$
we can further derive the following bound:
$$\|y_{t+1}-y_r^*(x_{t+1})\|^2  
=\; \|y_{t+1}-y_r^*(x_{t})+y_r^*(x_{t})-y_r^*(x_{t+1})\|^2
\leq\; D_1+D_2,$$
% \begin{align*}
% &\|y_{t+1}-y_r^*(x_{t+1})\|^2  \\
% =\;& \|y_{t+1}-y_r^*(x_{t})+y_r^*(x_{t})-y_r^*(x_{t+1})\|^2\\
% \leq\;& D_1+D_2,
% \end{align*}
where we denote $D_1=\left( 1+\frac{\lambda\mu\eta_t}{4} \right)\|y_{t+1}-y_r^*(x_{t})\|^2$, and $D_2=\left( 1+\frac{4}{\lambda\mu\eta_t} \right)\|y_r^*(x_{t})-y_r^*(x_{t+1})\|^2$.
Using the bound of $\|y_{t+1}-y_r^*(x_t)\|^2$ in \eqref{eq:bound_yy} and the fact $\lambda\mu\eta_t\leq 1$, we can control $D_1$ by:
\begin{align*}
D_1\leq & \left( 1-\frac{\lambda\mu\eta_t}{4} \right)\|\Delta_*^t\|^2-\frac{3\eta_t}{4}\|\Delta_y^t\|^2+\frac{6\lambda\eta_t}{\mu}\|\Tilde{\Delta}_w^t\|^2\\
\leq & \left( 1-\frac{\lambda\mu\eta_t}{4} \right)\|\Delta_*^t\|^2-\frac{3\eta_t}{4}\|\Delta_y^t\|^2 +\frac{12\lambda\eta_t}{\mu}\|\Delta_w^t\|^2+\frac{24L^2r^2\lambda \eta_t}{\mu}.
\end{align*}
In the last step, we applied the following inequality:
\begin{align*}
\|\nabla_y f_1(x_t,y_t)-w_t\|^2&=\|\nabla_y f_1(x_t,y_t)-\nabla_y f_2(x_t,y_t)+\nabla_y f_2(x_t,y_t)-w_t\|^2\\
&\leq 2\|\Delta_w^t\|^2+2\|\nabla_y f_1(x_t,y_t)-\nabla_y f(x_t,y_t)+\nabla_y f(x_t,y_t)-\nabla_y f_2(x_t,y_t)\|^2\\
&\leq 2\|\Delta_w^t\|^2+ 4L^2r^2.
\end{align*}
As for $D_2$, we use the smoothness of $y_r^*$ to derive 
$$D_2\leq  \left( 1+\frac{4}{\lambda\mu\eta_t} \right)\kappa^2\|x_{t+1}-x_{t}\|^2
\leq \frac{5\kappa^2\eta_t}{\lambda\mu} \|\Delta_x^t\|^2,$$
% \begin{align*}
% D_2\leq & \left( 1+\frac{4}{\lambda\mu\eta_t} \right)\kappa^2\|x_{t+1}-x_{t}\|^2\\
% \leq & \frac{5\kappa^2\eta_t}{\lambda\mu} \|\Delta_x^t\|^2,
% \end{align*}
where we also used the fact $\mu\lambda\eta_t\leq 1$. Combining the bounds for $D_1$ and $D_2$ finished the proof.

% \begin{align*}
% &\|y_{t+1}-y_r^*(x_{t+1})\|^2  \\
% =\;& \|y_{t+1}-y_r^*(x_{t})+y_r^*(x_{t})-y_r^*(x_{t+1})\|^2\\
% \leq&\left( 1+\frac{\lambda\mu\eta_t}{4} \right)\|y_{t+1}-y_r^*(x_{t})\|^2+ \left( 1+\frac{4}{\lambda\mu\eta_t} \right)\|y_r^*(x_{t})-y_r^*(x_{t+1})\|^2\\
% \leq\;& \left( 1+\frac{\lambda\mu\eta_t}{4} \right)\|y_{t+1}-y_r^*(x_{t})\|^2+ \left( 1+\frac{4}{\lambda\mu\eta_t} \right)\kappa^2\|x_{t+1}-x_{t}\|^2\\
% \leq\;& \left( 1-\frac{\lambda\mu\eta_t}{2} \right)\|y_{t}-y_r^*(x_{t})\|^2-\frac{3\eta_t}{4}\|\Tilde{y}_{t+1}-y_t\|^2+\frac{6\lambda\eta_t}{\mu}\|\nabla_y f(x_t,y_t)-w_t\|^2+\frac{5\kappa^2\eta_t}{\lambda\mu} \|\Tilde{x}_{t+1}-x_{t}\|^2\\
% \leq\; & \left( 1-\frac{\lambda\mu\eta_t}{2} \right)\|y_{t}-y_r^*(x_{t})\|^2-\frac{3\eta_t}{4}\|\Tilde{y}_{t+1}-y_t\|^2+\frac{12\lambda\eta_t}{\mu}\|\nabla_y f_{r_2}(x_t,y_t)-w_t\|^2\\
% & +\frac{5\kappa^2\eta_t}{\lambda\mu} \|\Tilde{x}_{t+1}-x_{t}\|^2+\frac{3L^2d_yr_2^2\lambda \eta_t}{\mu},
% \end{align*}
% where we used the smoothness of $y_r^*(x)$ in the second inequality, and $\frac{\mu\lambda\eta_t}{2\Bar{b}}\leq \frac{1}{2}$ in the third inequality. 

\subsection{Proof of Lemma \ref{lemma:v_error_bound}}
Based on the update for $v_t$, we have
% $$v_{t+1}-v_t= -\alpha_{t+1}v_t+\alpha_{t+1}\hat{\nabla}_x f(x_{t+1},y_{t+1};\mathcal{S}_{t+1})
% \!+\!(1\!-\!\alpha_{t+1})(\hat{\nabla}_xf\!(x_{t+1},y_{t+1};\mathcal{S}_{t+1}\!)\!-\!\hat{\nabla}_x f\!(x_t,y_t;\mathcal{S}_{t+1})).$$
$$
v_{t+1}-v_t= -\alpha_{t+1}v_t+\alpha_{t+1}\hat{\nabla}_x f(x_{t+1},y_{t+1};\mathcal{S}_{t+1})+(1-\alpha_{t+1})(\hat{\nabla}_xf(x_{t+1},y_{t+1};\mathcal{S}_{t+1})-\hat{\nabla}_x f(x_t,y_t;\mathcal{S}_{t+1})).
$$
Then, we can bound $\mathbb{E}\left[\|\Delta_v^{t+1}\|^2\right]$ by
\begin{align*}
&\mathbb{E}\left[\|\Delta_v^{t+1}\|^2\right]\\
=\;&\mathbb{E}\left[\|\nabla_x f_{1}(x_{t+1},y_{t+1})-v_t-(v_{t+1}-v_t)\|^2\right]\\
=\;& \mathbb{E}\left[\left\|\nabla_x f_{1}(x_{t+1},y_{t+1})-v_{t}+\alpha_{t+1}v_t\right.\right.-(1-\alpha_{t+1})\left(\hat{\nabla}_x f(x_{t+1},y_{t+1};\mathcal{S}_{t+1})-\hat{\nabla}_x f(x_t,y_t;\mathcal{S}_{t+1})\right)\\
&\qquad-\alpha_{t+1}\hat{\nabla}_x f(x_{t+1},y_{t+1};\mathcal{S}_{t+1})\left.\left.\right\|^2\right].
% \\
% \leq\;&\mathbb{E}\left[\left\|(1-\alpha_{t+1})\underbrace{\left(\nabla_x f_{r_1}(x_{t},y_{t})-v_{t}\right)}_{a_1}+\alpha_{t+1}\underbrace{\left( \nabla_x f_{r_1}(x_{t+1},y_{t+1})- \hat{\nabla}_xf(x_{t+1},y_{t+1};\mathcal{B}_{t+1})\right)}_{a_2}\right.\right.\\
% &+ (1-\alpha_{t+1})\left(\nabla_x f_{r_1}(x_{t+1},y_{t+1})-\left(\nabla_x f_{r_1}(x_{t},y_{t})-v_{t}\right) -\hat{\nabla}_x f(x_{t+1},y_{t+1};\mathcal{B}_{t+1})+\hat{\nabla}_x f(x_{t},y_{t};\mathcal{B}_{t+1})\right)\left.\left.\left.\right\|^2\right| \mathcal{F}_{t}\right]
\end{align*}
For notational simplicity, we denote 
% $a_1=\nabla_x f_{1}(x_{t},y_{t})-v_{t}$, $a_2=\nabla_x f_{1}(x_{t+1},y_{t+1})-\nabla_x f_{1}(x_{t},y_{t}) -\hat{\nabla}_x f(x_{t+1},y_{t+1};\mathcal{S}_{t+1})+\hat{\nabla}_x f(x_{t},y_{t};\mathcal{S}_{t+1})$, and $a_3=\nabla_x f_{1}(x_{t+1},y_{t+1})- \hat{\nabla}_xf(x_{t+1},y_{t+1};\mathcal{S}_{t+1})$.
\begin{align*}
a_1&=\nabla_x f_{1}(x_{t},y_{t})-v_{t},\\
a_2&=\nabla_x f_{1}(x_{t+1},y_{t+1})-\nabla_x f_{1}(x_{t},y_{t}) -\hat{\nabla}_x f(x_{t+1},y_{t+1};\mathcal{S}_{t+1})+\hat{\nabla}_x f(x_{t},y_{t};\mathcal{S}_{t+1}),\\
a_3&=\nabla_x f_{1}(x_{t+1},y_{t+1})- \hat{\nabla}_xf(x_{t+1},y_{t+1};\mathcal{S}_{t+1}).
\end{align*}
Then we can further decompose $\mathbb{E}\left[\|\Delta_v^{t+1}\|^2\right]$ by
\begin{align}\label{eq:proof_v_diff_1}
&\mathbb{E}\left[\|\Delta_v^{t+1}\|^2\right]\nonumber\\
=\;&\mathbb{E}\left[\left\| (1-\alpha_{t+1})a_1+(1-\alpha_{t+1})a_2+\alpha_{t+1}a_3
 \right\|^2\right]\nonumber\\
 =\;& \mathbb{E}\left[(1-\alpha_{t+1})^2 \left\| a_1\right\|^2+(1-\alpha_{t+1})^2\left\| a_2\right\|^2+ 2(1-\alpha_{t+1})^2a_1^{\text{T}}a_2+2\alpha_{t+1}(1-\alpha_{t+1})a_2^{\text{T}}a_3\right]\nonumber\\
 &+\mathbb{E}\left[\alpha_{t+1}^2\left\| a_3\right\|^2+2\alpha_{t+1}(1-\alpha_{t+1})a_1^{\text{T}}a_3\right].
\end{align}
Using the relations $\mathbb{E}\left[ \hat{\nabla}_xf(x_{t},y_{t};\mathcal{S}_{t+1}) \right]=\nabla_x f_{1}(x_{t},y_{t})$, and $\mathbb{E}\left[ \hat{\nabla}_xf(x_{t+1},y_{t+1};\mathcal{S}_{t+1}) \right]=\nabla_x f_{1}(x_{t+1},y_{t+1})$,
% \begin{align*}
%     &\mathbb{E}\left[ \hat{\nabla}_xf(x_{t},y_{t};\mathcal{S}_{t+1}) \right]=\nabla_x f_{1}(x_{t},y_{t}),\\
%     &\mathbb{E}\left[ \hat{\nabla}_xf(x_{t+1},y_{t+1};\mathcal{S}_{t+1}) \right]=\nabla_x f_{1}(x_{t+1},y_{t+1}),
% \end{align*}
we can get $\mathbb{E}\left[ a_1^{\text{T}}a_2]=\mathbb{E}[a_1^{\text{T}}a_3\right]=0$.
Thus, we can arrange \eqref{eq:proof_v_diff_1} to get
\begin{align*}
& \mathbb{E}\left[\|\Delta_v^{t+1}\|^2\right]\\
=\;& \mathbb{E}\left[(1-\alpha_{t+1})^2 \left\| a_1\right\|^2+(1-\alpha_{t+1})^2\left\| a_2\right\|^2+\alpha_{t+1}^2\left\| a_3\right\|^2+2\alpha_{t+1}(1-\alpha_{t+1})a_2^{\text{T}}a_3\right]\\
\leq\;&\mathbb{E}\left[(1-\alpha_{t+1})^2 (\left\| a_1\right\|^2+2\left\| a_2\right\|^2)+2\alpha_{t+1}^2\left\| a_3\right\|^2\right]\\
\leq\;& \mathbb{E}\left[(1-\alpha_{t+1})^2 (\left\| a_1\right\|^2+2\left\| a_2\right\|^2)\right]+\frac{2\alpha_{t+1}^2\sigma^2}{q}.
\end{align*}
Since $\mathbb{E}[\|\varsigma-\mathbb{E}[\varsigma]\|^2]=\mathbb{E}[\|\varsigma\|^2]-\|\mathbb{E}[\varsigma]\|^2$ for any random variable $\varsigma$, we have for any $\zeta_{t+1}^i$
\begin{align}
& \mathbb{E}[\|a_2\|^2]\notag \\
\leq\;& \mathbb{E}\left[\left\|\hat{\nabla}_x f(x_{t+1},y_{t+1};\mathcal{S}_{t+1})-\hat{\nabla}_x f(x_{t},y_{t};\mathcal{S}_{t+1})\right\|^2\right]\notag \\
\leq\;& \frac{1}{q}\mathbb{E}\left[\left\|\hat{\nabla}_x f(x_{t+1},y_{t+1};\zeta_{t+1}^i)\!-\!\hat{\nabla}_x f(x_{t},y_{t};\zeta_{t+1}^i)\right\|^2\right]\notag\\
=\;& \frac{d_x^2}{q}\mathbb{E}\left[\left\|\frac{f(x_{t+1}+rz_{t+1}^i,y_{t+1};\xi_{t+1}^i)-f(x_{t+1},y_{t+1};\xi_{t+1}^i)}{r}z_{t+1}^i-\frac{f(x_{t}+rz_{t+1}^i,y_{t};\xi_{t+1}^i)-f(x_{t},y_{t};\xi_{t+1}^i)}{r}z_{t+1}^i\right\|^2\right]\notag\\
=\; & \frac{d_x^2}{q}\mathbb{E}\left[\left\| \frac{f(x_{t+1}+rz_{t+1}^i,y_{t+1};\xi_{t+1}^i)-f(x_{t+1},y_{t+1};\xi_{t+1}^i)-\langle \nabla_x f(x_{t+1},y_{t+1};\zeta_{t+1}^i), r z_{t+1}^i\rangle}{r}z_{t+1}^i \right.\right.\notag\\
& + \langle\nabla_x f(x_{t},y_{t};\zeta_{t+1}^i)-\nabla_x f(x_{t+1},y_{t+1};\zeta_{t+1}^i), z_{t+1}^i\rangle z_{t+1}^i \notag\\
& \left.\left. - \frac{f(x_{t}+rz_{t+1}^i,y_{t};\xi_{t+1}^i)-f(x_{t},y_{t};\xi_{t+1}^i)-\langle\nabla_x f(x_{t},y_{t};\zeta_{t+1}^i), r z_{t+1}^i\rangle}{r}z_{t+1}^i \right\|^2 \right].\label{eq:a2_mid}
% \leq & \frac{3}{q}\mathbb{E}\left[\left\| \langle \nabla_x f(x_{t+1},y_{t+1};\zeta_{t+1}^i)-\nabla_x f(x_{t},y_{t};\zeta_{t+1}^i), z_{t+1}^i\rangle z_{t+1}^i\right\|^2\right]+\frac{45L^2r^2d_x}{2q},
\end{align}
Using the smoothness of $f$ in $x$, we can get
\begin{align*}
f(x_{t+1}+rz_{t+1}^i,y_{t+1};\xi_{t+1}^i)-f(x_{t+1},y_{t+1};\xi_{t+1}^i)-\langle \nabla_x f(x_{t+1},y_{t+1};\zeta_{t+1}^i), r z_{t+1}^i\rangle & \leq \frac{Lr^2}{2}\|z_{t+1}^i\|^2,\\
f(x_{t}+rz_{t+1}^i,y_{t};\xi_{t+1}^i)-f(x_{t},y_{t};\xi_{t+1}^i)-\langle\nabla_x f(x_{t},y_{t};\zeta_{t+1}^i), r z_{t+1}^i\rangle &\leq \frac{Lr^2}{2}\|z_{t+1}^i\|^2.
\end{align*}
Combining the above two inequalities with \eqref{eq:a2_mid} and using the Cauchy-Schwarz inequality, we have
\begin{align*}
    \mathbb{E}[\|a_2\|^2]\leq & \frac{3d_x^2}{q}\mathbb{E}\left[\left\| \langle \nabla_x f(x_{t+1},y_{t+1};\zeta_{t+1}^i)-\nabla_x f(x_{t},y_{t};\zeta_{t+1}^i), z_{t+1}^i\rangle z_{t+1}^i\right\|^2\right]+\frac{45L^2r^2d_x^2}{2q},
\end{align*}
where we also used the fact that $\mathbb{E}[\|z_{t+1}^i\|^6]=\frac{(d_x+2)(d_x+4)}{d_x^2}\leq 15$. Denote $G=\mathbb{E}_{z_{t+1}^i}\left[\|z_{t+1}^i\|^2z_{t+1}^i(z_{t+1}^i)^T\right]$. We have $G=\frac{d_x+2}{d_x^2}I_{d_x}$ \citep{berahas2022theoretical} and
% $\mathbb{E}\big[\left\| \langle \nabla_x f(x_{t+1},y_{t+1};\zeta_{t+1}^i)-\nabla_x f(x_{t},y_{t};\zeta_{t+1}^i),z_{t+1}^i\rangle z_{t+1}^i\right\|^2\big]=\mathbb{E}_{\xi_{t+1}^i}\big[\big(\nabla_x f(x_{t+1},y_{t+1};\zeta_{t+1}^i)-\nabla_x f(x_{t},y_{t};\zeta_{t+1}^i)\big)^T G\big(\nabla_x f(x_{t+1},y_{t+1};\zeta_{t+1}^i)-\nabla_x f(x_{t},y_{t};\zeta_{t+1}^i)\big)\big]$.
\begin{align*}
& \mathbb{E}\big[\left\| \langle \nabla_x f(x_{t+1},y_{t+1};\zeta_{t+1}^i)-\nabla_x f(x_{t},y_{t};\zeta_{t+1}^i),z_{t+1}^i\rangle z_{t+1}^i\right\|^2\big]\\
=\;&\mathbb{E}_{\xi_{t+1}^i}\big[\big(\nabla_x f(x_{t+1},y_{t+1};\zeta_{t+1}^i)-\nabla_x f(x_{t},y_{t};\zeta_{t+1}^i)\big)^T G\big(\nabla_x f(x_{t+1},y_{t+1};\zeta_{t+1}^i)-\nabla_x f(x_{t},y_{t};\zeta_{t+1}^i)\big)\big]\\
=\; & \frac{d_x+2}{d_x^2}\mathbb{E}_{\xi_{t+1}^i}\big[\|\nabla_x f(x_{t+1},y_{t+1};\zeta_{t+1}^i)-\nabla_x f(x_{t},y_{t};\zeta_{t+1}^i)\|^2\big]
\end{align*}
Using the smoothness of $f$, we can further bound $\mathbb{E}[\|a_2\|^2]$ by:
\begin{align*}
\mathbb{E}[\|a_2\|^2]\leq &\frac{45L^2r^2d_x^2}{2q}+ \frac{6L^2(d_x+2)}{q}\mathbb{E} \left[\|x_{t+1}-x_t\|^2+\|y_{t+1}-y_t\|^2\right]\\
\leq &\frac{45L^2r^2d_x^2}{2q}+ \frac{6L^2(d_x+2)\eta_t^2}{q}\mathbb{E}\left[\|\Delta_x^t\|^2+\|\Delta_y^t\|^2\!\right].
\end{align*}
% For any $\xi_{t+1}^i$, we can further control $\|a_2\|^2$ by
% \begin{align*}
% \mathbb{E}[\|a_2\|^2]\leq\;& \frac{1}{q}\left\|\hat{\nabla}_x f(x_{t+1},y_{t+1};\xi_{t+1}^i)-\hat{\nabla}_x f(x_{t},y_{t};\xi_{t+1}^i\right\|^2\\
% \leq\;& \frac{1}{q}\left\|\hat{\nabla}_x f(x_{t+1},y_{t+1};\xi_{t+1}^i)-\nabla_x f(x_{t+1},y_{t+1})+\nabla_x f(x_{t+1},y_{t+1})-\nabla_x f(x_{t},y_{t})\right.\\
% &\qquad +\left.\nabla_x f(x_{t},y_{t})-\hat{\nabla}_x f(x_{t},y_{t};\xi_{t+1}^i)\right\|^2\\
% \leq\;& \frac{3L^2r_1^2d_x}{2q}+\frac{3}{q} \left\|\nabla_x f(x_{t+1},y_{t+1})-\nabla_x f(x_{t},y_{t})\right\|^2\\
% \leq\;& \frac{3L^2r_1^2d_x}{2q}+\frac{3L^2}{q}\left(\|x_{t+1}-x_t\|^2+\|y_{t+1}-y_t\|^2\right)\\
% \leq\;& \frac{3L^2r_1^2d_x}{2q}+\frac{3L\eta_t^2}{q}\left(\|\Tilde{x}_{t+1}-x_t\|^2+\|\Tilde{y}_{t+1}-y_t\|^2\right).
% \end{align*}
Finally, we can derive the bound for $\mathbb{E}\left[\|\Delta_v^{t+1}\|^2\right]$:
$$\mathbb{E}\left[\|\Delta_v^{t+1}\|^2\right]
\leq\; \mathbb{E}\left[(1-\alpha_{t+1})^2 \left\| \Delta_v^t\right\|^2\right]+\frac{2\alpha_{t+1}^2\sigma^2}{q}
+\frac{45L^2r^2d_x^2}{q}+ \frac{12L^2(d_x+2)\eta_t^2}{q}\mathbb{E}
\left[\|\Delta_x^t\|^2+\|\Delta_y^t\|^2\right].$$
% \begin{align*}
% \mathbb{E}\left[\|\Delta_v^{t+1}\|^2\right]
% \leq\; \mathbb{E}\left[(1-\alpha_{t+1})^2 \left\| \Delta_v^t\right\|^2\right]+\frac{2\alpha_{t+1}^2\sigma^2}{q}
% +\frac{45L^2r^2d_x}{q}\!+\! \frac{12L^2(d_x+2)\eta_t^2}{q}\!\mathbb{E}
% \!\left[\!\|\Delta_x^t\|^2\!+\!\|\Delta_y^t\|^2\!\right].
% \end{align*}
Taking one more step, we have
\begin{align*}
&\mathbb{E}\left[\frac{1}{\eta_t}\|\Delta_v^{t+1}\|^2-\frac{1}{\eta_{t-1}}\|\Delta_v^t\|^2\right]\\
\leq\;& \mathbb{E}\left[\left(\frac{(1-\alpha_{t+1})^2}{\eta_t}-\frac{1}{\eta_{t-1}}\right) \left\| a_1\right\|^2\right]+\frac{2\alpha_{t+1}^2\sigma^2}{q\eta_t}+\frac{12L^2(d_x+2)\eta_t}{q}\mathbb{E}\left[\|\Delta_x^t\|^2+\|\Delta_y^t\|^2\right]+\frac{45L^2r^2d_x^2}{q\eta_t}.
\end{align*}
By $\alpha_{t+1}=c\eta_t^2$ with $c\geq 2$, $\eta_t\leq \eta_0\leq\frac{1}{c}$, and the inequality $\frac{1}{\eta_t}-\frac{1}{\eta_{t-1}}\leq \frac{2}{3}\eta_t$ (Eq.(93) in \cite{huang2023adagda}), we have $\frac{(1-\alpha_{t+1})^2}{\eta_t}-\frac{1}{\eta_{t-1}}\leq \frac{2\eta_t}{3}-\frac{2\alpha_{t+1}}{\eta_t}+c^2\eta_t^3\leq -\frac{\alpha_{t+1}}{\eta_t}$. Substituting this inequality into the above one can finish the proof of the first inequality in Lemma \ref{lemma:v_error_bound}. The proof of the second inequality follows the same procedure.

\subsection{Proof of Lemma \ref{lemma:bound_T}}
We try to bound $T_1, T_2, T_3$, and $T_4$ respectively. First, since $\eta_t$ are decreasing with $t$, we have $$T_1 = \frac{16}{T}\sum_{t=1}^T\frac{(\phi_t-\phi_{t+1})}{\gamma\eta_T}
=\frac{16(\phi_1-\phi_{T+1})}{ k_1 T}\cdot (m+T)^{\frac{11-3e}{15-3e}}.$$
% \begin{align*}
% T_1 & \leq \frac{16}{T}\sum_{t=1}^T\frac{(\phi_t-\phi_{t+1})}{\gamma\eta_T}\\
% &=\frac{16(\phi_1-\phi_{T+1})}{ k_1 T}\cdot (m+T)^{\frac{1}{3}+\frac{6-2e}{15-3e}}\\
% &\leq \frac{16(\phi_1-\phi_{T+1})}{ k_1 }\left(\frac{m^{\frac{11-3e}{15-3e}}}{T}+\frac{1}{T^{\frac{4}{15-3e}}}\right).
% \end{align*}
By the definition of $\phi_t$, we can bound $\phi_1$ by
% $$\phi_1\leq F_r(x_1)+\frac{8L^2k_1\mu}{k_2} D_y^2+ \frac{2\gamma\sigma^2}{\mu^2\eta_{0}q}.$$
\begin{align*}
\phi_1 \leq\; & \mathbb{E}\big[F_r(x_1)+\frac{8L^2k_1\mu}{k_2}\|y_{1}-y_r^*(x_{1})\|^2+\frac{\gamma}{\mu^2\eta_{0}}\big( \|\nabla_x f_{1}(x_1,y_1)-v_1\|^2+\|\nabla_y f_{2}(x_1,y_1)-w_1\|^2\big)\big]\\
\leq\; & F_r(x_1)+\frac{8L^2k_1\mu}{k_2} D_y^2+ \frac{2k_1\mu^{1-e}\sigma^2}{\eta_{0}q}.
\end{align*}
It leads to $\phi_1-\phi_{T+1}\leq F_r(x_1)+\frac{8L^2k_1\mu}{k_2} D_y^2+ \frac{2k_1\mu^{1-e}\sigma^2}{\eta_{0}q}-F_r^*$, where $F_r^*=\min_{x\in\mathcal{X}}F_r(x)$.
Denote $R_1=\frac{16}{k_1}\big(F_r(x_1)+\frac{8L^2k_1\mu}{k_2} D_y^2+ \frac{2k_1\mu^{1-e}\sigma^2}{\eta_{0}q}
-F_r^*\big)$. Then, we can bound $T_1$ by
$$T_1\leq \frac{R_1}{ T }(m+T)^{\frac{11-3e}{15-3e}}.$$
% \begin{align*}
% T_1\leq \frac{16M_1}{ k_1 }\left(\frac{m^{\frac{11-3e}{15-3e}}}{T}+\frac{1}{T^{\frac{4}{15-3e}}}\right)=O\left( T^{-\frac{4}{15-3e}}\right).
% \end{align*}
Due to $r\leq\frac{1}{L\sqrt{d}(m+T)^{2/3}}$ and $\eta_t=\frac{1}{(m+t)^{1/3}}$, we have 
$$T_2=\frac{16}{T}\sum_{t=1}^T\frac{192L^4 r^2 \eta_t}{\mu^2\eta_T} <\frac{3200L^4 r^2}{\mu^2T\eta_T^3}\sum_{t=1}^T\eta_t^3
 \leq \frac{3200L^2\ln{(m+T)}}{d T (m+T)^{\frac{1-e}{15-3e}}}\leq \frac{3200L^2\ln{(m+T)}}{d T}.$$
% \begin{align*}
% T_2 & \leq \frac{16}{T}\sum_{t=1}^T\frac{96L^4d_y r^2}{\mu^2}\\
% & \leq \frac{16}{T}\sum_{t=1}^T\frac{96L^2}{ (m+t)(m+T)^{\frac{1-e}{15-3e}}}\\
% &\leq \frac{1600L^2\ln{(m+T)}}{T}.
% \end{align*}
We bound $T_3$ by 
$$T_3 =\frac{16}{T}\sum_{t=1}^T\frac{45L^2d^2 r^2}{ q\eta_t\eta_T\mu^2}\leq \frac{720L^2d^2r^2}{q\mu^2 T}\sum_{t=1}^T(m+T)^{\frac{2}{3}} \leq \frac{720L^2d^2r^2}{q\mu^2 }(m+T)^{\frac{2}{3}}
\leq \frac{720d}{q(m+T)^{\frac{6-2e}{15-3e}}},$$
which leads to $T_3\leq \frac{720d}{q}(m+T)^{-\frac{4}{15-3e}}$.
Finally, we bound $T_4$ by 
$$T_4=\frac{16}{T}\sum_{t=1}^T\frac{4c^2 \eta_t^3\sigma^2}{  q\mu^2\eta_T}=\frac{64c^2\sigma^2}{q\mu^2T\eta_T}\sum_{t=1}^T (m+t)^{-1}  
\leq \frac{64c^2\sigma^2\ln (m+T)}{q\mu^2T\eta_T}=\frac{64c^2\sigma^2\ln(m+T)}{qT}(m+T)^{\frac{9-e}{15-3e}}.$$
% \frac{16}{T}\sum_{t=1}^T\frac{4c^2\eta_t^3\sigma^2}{q\eta_T\mu^{2}}
% \leq \frac{16}{T}\sum_{t=1}^T
% \frac{4c^2\sigma^2 (m+T)^{\frac{9-e}{15-3e}}}{q(m+t)}
% \leq  \frac{64c^2\sigma^2\ln(m+T)}{qT}\left(m+T\right)^{\frac{9-e}{15-3e}}.
Combining the above bounds for $T_1, T_2, T_3,$ and $T_4$ finishes the proof.
% \begin{align*}
% T_4\leq & \frac{16}{T}\sum_{t=1}^T\frac{4c\eta_t^3\sigma^2}{k_1\eta_T\mu^{2-e}}\\
% \leq & \frac{16}{T}\sum_{t=1}^T
% \frac{4c\sigma^2 (m+T)^{\frac{3-e}{5-e}}}{k_1(m+t)}\\
% \leq & \frac{64c\sigma^2\ln(m+T)}{k_1}\left(\frac{m^{\frac{3-e}{5-e}}}{T}+\frac{1}{T^{\frac{2}{5-e}}}\right).
% \end{align*}

\section{Experimental Detail}
\subsection{Serial Queuing System}
\paragraph{Problem Formulation.} We consider a serial queuing system consisting of five sequential subqueues, where a single server serves each subqueue. The goal is to minimize service cost by adjusting the service rates of all servers while considering the service quality. The optimization problem is formulated as
\begin{align*}
\min_{x\in\mathcal{X}}\; & h_0(x)=c^T x\\
\text{s.t.}\; & W(x)=\mathbb{E}[W(x;\xi)] \leq \overline{w},
\end{align*}
where $x\in\mathcal{X}\subset\mathbb{R}^5$ is the vector of service rates. $h_0: \mathbb{R}^5\to \mathbb{R}$ is the cost function and $c\in\mathbb{R}^5$ is the vector of cost coefficients. $W: \mathcal{X} \to \mathbb{R}$ is the expected average waiting time of the queuing system, and $\overline{w}$ is the maximum waiting time. The expected waiting time has no analytical form and can only be evaluated by the simulation outputs.
\paragraph{Comparison Algorithms.} Besides validating the performance of the MGS algorithm, we also compare it with three simulation optimization algorithms that can solve this problem. The first one is a penalty-based gradient search (PGS) algorithm \citep{wang2008stochastic}, where the constraint violation is added to the objective function to be minimized:
\begin{align}\label{eq:penalty_obj}
    \overline{h}_0(x)= h_0(x)+ w_p\left(\max( W(x)-\overline{w}, 0) \right)^2.
\end{align}
Wherein, the positive scalar $w_p$ is the parameter to adjust the weight on the constraint violation. Then, similar to (6) and (7) in the main body of our paper, we use $\bar{q}$ two-point gradient estimators to construct averaged gradients of \eqref{eq:penalty_obj} to perform gradient descent. Specifically, the gradient of \eqref{eq:penalty_obj} can be estimated by
\begin{align*}
\hat{\nabla}_x \overline{h}_0(x;\mathcal{S})=\frac{1}{\overline{q}}\sum_{i=1}^{\overline{q}}\frac{d_x(\overline{h}_0(x+rz^i;\xi^i)-\overline{h}_0(x;\xi^i))}{r}\cdot z.
\end{align*}
The second one is the Bayesian optimization (BO) based on constrained expected improvement (CEI), which is widely applied to solve constrained simulation optimization problems \citep{gardner2014bayesian,ragueneau2024constrained}. While the computational complexity of CEI is too high to handle high-dimensional problems, it can typically generate high-quality solutions for problems with a small feasible space. Similarly, function evaluations are required for the sampling of CEI. We apply $20$ repeated simulations to evaluate the average total waiting time for each sampled point. The third one is the genetic algorithm (GA), which is widely applied to solve complex problems. While the computational complexity of GA is too high to handle high-dimensional problems, it can typically generate high-quality solutions for problems with a small feasible space. Due to its prevalence \citep{rajwar2023exhaustive}, we omit a detailed description of the algorithm here. For GA, we also use $q$ replications to estimate the average waiting time for each $x$ during the solution process.

% The second one is the genetic algorithm (GA), which is widely applied to solve complex problems. While the computational complexity of GA is too high to handle high-dimensional problems, it can typically generate high-quality solutions for problems with a small feasible space. Due to its prevalence \citep{rajwar2023exhaustive}, we omit a detailed description of the algorithm here. For GA, we also use $q$ replications to estimate the average waiting time for each $x$ during the solution process.
\paragraph{Parameters Setting.} 
The detailed parameters of the queuing system are listed in Table \ref{tab:env_parameter}.
% The feasible space is $\mathcal{X}=[1,5]^5$; the maximum waiting time is set as $\overline{w}=5$; the cost coefficient is set as $c=(10,6,6,8,10)^T$. For each simulation, the arrival rate is set as 1, and the total number of customers is 1000. 
\begin{table}[htpb]
\centering
\caption{Parameter settings of the queuing system}
\small
\begin{tabular}{cc}
\hline
Parameter                   & Value              \\ \hline
$\mathcal{X}$               & $[1,5]^5$          \\
Max. waiting time $\Bar{w}$ & 5                  \\
Arrival rate                & 1                  \\
Cost coefficient $c$        & $(10, 6,6,8,10)^T$ \\
Number of customers         & 1000               \\ \hline
\end{tabular}
\label{tab:env_parameter}
\end{table}
We test all algorithms with multiple parameter settings and choose the ones with the best performance. All algorithms are tested with 300 iterations. The smoothing radius is set as $r=10^{-3}$ for all gradient estimations. For MGS, we run the algorithms with $q=1,10,20, 50$, respectively. We set $\mathcal{Y}=[0,1000]$, $\mu=10^{-3}$, $\eta_t=(10+t)^{-\frac{1}{3}}$, and $\alpha_t=\beta_t=6\eta_t^2$ for all MGS algorithms.
We set $\gamma=0.01,\; \lambda=0.1$ when $q=1$, and $\gamma=0.05,\; \lambda=0.2$ when $q=10, 20, 50$. For PGS, we set the weight parameter as $w_p=10$ and use diminishing step sizes, i.e., $\eta_t=0.02/\sqrt{t+1}$. Moreover, for each gradient estimation, we perform $\overline{q}=20$ replications to take the average value. For CEI, the number of initial samples is set to 40, and the total number of samples is 300. For GA, the population size is 30 with 50 generations. The mutation rate is 0.15, and the crossover rate is 0.85. To estimate the average waiting time of each iterate, we independently repeat the simulations 20 times and calculate the average value.

% For GA, the population size is 30 with 50 generations. The mutation rate is 0.15, and the crossover rate is 0.85. To estimate the average waiting time and stationarity measure of each iterate, we independently sample the random variables 20 times to repeat the simulation.

\subsection{High-Dimensional Case}
\paragraph{Problem Formulation.} We consider a high-dimensional case to verify the capability of MGS to deal with high-dimensional problems. Specifically, we consider the stochastic optimization problem:
\begin{equation}
\begin{aligned}\label{eq:high_dimensional_problem}
\min_{x\in\mathcal{X}\subseteq \mathbb{R}^{2000}}\; & h_0(x)=\mathbb{E}[h_0(x;a)]=\mathbb{E}[\langle a, x^3\rangle] -5x^Tx\\
 \text{s.t.}\;& h_1(x)=\mathbb{E}[h_1(x;b)]=\mathbb{E}[\langle b,x^2\rangle]-200 \leq 0,
\end{aligned}
\end{equation}
where $a,b\in\mathbb{R}^{2000}$ are random variables following Gaussian distributions. Assume we can only get the noisy function values of the objective and constraint.
\paragraph{Comparison Algorithms.} Due to the inefficiency of CEI to deal with high-dimensional problems, we only compare MGS with PGS for solving problem \eqref{eq:high_dimensional_problem}.

\paragraph{Parameters Setting.} We set the feasible space as $\mathcal{X}=[0,3]^{2000}$. The random variables $a,b$ follow Gaussian distributions $\mathcal{N}(\overline{a},0.5\cdot I_{2000})$ and $\mathcal{N}(\mathbf{1}, 0.05\cdot I_{2000})$, where $\overline{a}$ is a constant vector randomly generated with all entries centered at 2. For MGS, we test it with $q=5,10$, respectively. We set $\mathcal{Y}=[0,1000]$, $\mu=10^{-3}$, $\eta_t=(100+t)^{-\frac{1}{3}}$ and $\alpha_t=\beta_t=6\eta_t^2$. Furthermore, we set $\gamma=0.1, \lambda=0.01$ when $q=5$ and $\gamma=0.1, \lambda=0.008$ when $q=10$. For PGS, we set the weight parameter as $w_p=5$ and use diminishing step sizes, i.e., $\eta_t=0.01/\sqrt{t+1}$. Moreover, for each gradient estimation, we perform $\overline{q}=20$ replications to take the average value.

\end{document}